%% file: main.tex
\documentclass[11pt]{amsart}

\usepackage[margin=1in]{geometry}
\usepackage{amsmath, amssymb, amsthm, mathtools}
\usepackage{hyperref}
\usepackage{placeins}
\usepackage[colorinlistoftodos]{todonotes}
\hypersetup{
    colorlinks=true,
    linkcolor=blue,
    citecolor=blue,
    urlcolor=blue
}
\usepackage{tikz}
\usetikzlibrary{patterns.meta,arrows.meta}

\newtheorem{theorem}{Theorem}[section]
\newtheorem{lemma}[theorem]{Lemma}
\newtheorem{proposition}[theorem]{Proposition}
\newtheorem{corollary}[theorem]{Corollary}

\theoremstyle{definition}

\theoremstyle{remark}
\newtheorem{remark}[theorem]{Remark}

\newcommand{\R}{\mathbb{R}}

\newcommand{\cP}{\mathcal{P}}
\newcommand{\cA}{\mathcal{A}}

\newcommand{\E}{\mathbb{E}}

\newcommand{\1}{\mathbf{1}}

\newcommand{\diam}{\operatorname{diam}}

\newcommand{\Lip}{\operatorname{Lip}}

\newcommand{\Sd}{\mathbb{S}}

\newcommand{\one}{\mathbf{1}}
\DeclarePairedDelimiter{\abs}{\lvert}{\rvert}

\numberwithin{equation}{section}

\newcommand{\TV}{\mathrm{TV}}

\title{Sample Complexity for the 2-Gromov--Wasserstein Distance}

\author{Pui Kuen Leung}
\address[Pui Kuen Leung]{Department of Statistics, University of Chicago}
\email{\texttt{pkl@uchicago.edu}}
\author{Riku Okada}
\address[Riku Okada]{Committee on Computational and Applied Mathematics, University of Chicago}
\email{\texttt{rikuo@uchicago.edu}}
\author{Samuel Lok-Hei Wong}
\address[Samuel Lok-Hei Wong]{Committee on Computational and Applied Mathematics, University of Chicago}
\email{\texttt{slhwong@uchicago.edu}}
\thanks{The authors are listed in alphabetical order.}
\date{\today}
\begin{document}

\begin{abstract}
In this paper, we study the sample complexity of the empirical plug-in estimator for the
$2$-Gromov--Wasserstein distance $D_2$ between compactly supported probability measures on 
Euclidean spaces. Let $\mu$ and $\nu$ be supported on compact subsets of 
$\mathbb{R}^{d_x}$ and $\mathbb{R}^{d_y}$, respectively, and let $\widehat\mu_n$ and $\widehat\nu_n$ be their empirical measures based on independent samples of size $n$. We prove that
\[
\mathbb{E}\left|D_2^2(\widehat\mu_n,\widehat\nu_n)-D_2^2(\mu,\nu)\right|
\lesssim
n^{-2/((d_x\wedge d_y)\vee 4)}
(\log n)^{\mathbf 1_{\{d_x\wedge d_y=4\}}}.
\]
This rate is sharp up to the logarithmic factor in the critical dimension. The proof is based on a geometric representation of the Euclidean distance as a squared $L^2$-distance between half-space feature maps. This yields a variational dual formulation of the Gromov--Wasserstein functional in terms of a family of classical optimal transport problems indexed by an infinite-dimensional auxiliary parameter. Although the resulting cost functions need not be semiconcave in either argument, we introduce a marginal recentering of the costs that restores the concavity structure needed for sharp metric-entropy bounds. Combining this representation with empirical-process estimates gives a rate governed by the smaller of the two ambient dimensions.
\end{abstract}
\maketitle 

\paragraph{Keywords.}
Gromov--Wasserstein distance, empirical measures, sample complexity, optimal transport.

\paragraph{MSC 2020.}
Primary 49Q22; Secondary 62G05, 53C65, 60D05.

\input{sections/introduction}
\input{sections/preliminaries}
\input{sections/crofton-2}

\input{sections/dual}
\input{sections/proofs-main-results}
\bibliographystyle{plain}
\bibliography{references}
\clearpage
\appendix
\input{appendix/proofs-crofton-duality}
\input{appendix/proofs-crofton-halfspace}
\input{appendix/dual}
\input{appendix/proofs-duality-cost}
\input{appendix/proofs-sample-complexity}
\input{appendix/semiconcave-failure}

\end{document}

%% file: sections/introduction.tex
\section{Introduction}
The Gromov--Wasserstein (GW) distance, first introduced by M\'emoli~\cite{memoli2011gromov}, provides a framework for comparing probability measures supported on different metric spaces. For metric measure
spaces $(\mathcal X,d_X,\mu)$ and $(\mathcal Y,d_Y,\nu)$, the $p$-Gromov--Wasserstein distance is defined as
\begin{equation}
\label{eq:gw-distance}
    D_{p}(\mu,\nu)
    :=
    \inf_{\gamma \in \Pi(\mu,\nu)}
    \left(
        \iint_{(\mathcal X\times \mathcal Y)^2}
        \left|
            d_X(x,x') - d_Y(y,y')
        \right|^p
        \,d\gamma(x,y)\,d\gamma(x',y')
    \right)^{1/p},
\end{equation}
where the infimum is taken over all couplings $\gamma\in \Pi(\mu,\nu)$ with marginals $\mu$ and $\nu$. Unlike classical optimal transport (OT), the cost is not prescribed directly between a point $x \in \mathcal X$ and a point $y \in \mathcal Y$. Instead, a coupling $\gamma$ is interpreted as a correspondence between the two metric spaces, and the objective penalizes the distortion of pairwise distances under this correspondence. This makes the GW distance a natural object of study in many geometric data analysis applications when comparing shapes, graphs, and other relational data that lack a common coordinate system or canonical alignment. Since its introduction, the GW distance has become a widely used tool in computational optimal transport and geometric data analysis~\cite{chowdhury2019gromov,peyre2016gromov,vayer2019optimal,xu2019gromov}. 

This naturally raises the need to understand its statistical properties. In applications, one often observes finitely many samples from a probability distribution rather than the distribution itself. Thus, an important statistical problem is to understand the convergence of the empirical plug-in estimator $D_{p}(\widehat\mu_n,\widehat\nu_n)$ to the true distance $D_{p}(\mu,\nu)$. While it is known that this convergence holds almost surely~\cite{memoli2011gromov}, the sharp nonasymptotic rate of convergence had remained unknown, even in expectation and in the Euclidean setting. In contrast, for classical OT, this problem is by now well studied. Indeed, the study of empirical OT convergence dates at least to Dudley~\cite{dudley1969speed}, who used the Kantorovich dual formulation for the $1$-Wasserstein distance $W_1$ together with metric-entropy methods to obtain convergence rates for empirical measures on totally bounded metric spaces. In particular, for a compactly supported probability measure $\mu$ on $\R^d$, Dudley's result yields
$\E[W_1(\widehat\mu_n,\mu)]\lesssim n^{-1/d}$
when $d>2$. Later work~\cite{dereich2013constructive} proves, for a probability measure $\mu$ on $\R^d$ satisfying suitable moment conditions, the bound
$\E[W_p^p(\widehat\mu_n,\mu)]\lesssim n^{-p/d}$
when $1\leq p<d/2$. ~\cite{fournier2015rate} extends this analysis to every $p>0$, covering also the critical and low-dimensional regimes $d=2p$ and $d<2p$. Further work~\cite{weed2019sharp} develops sharp asymptotic and finite-sample bounds on more general compact metric spaces, while ~\cite{lei2020convergence} extends related upper-bound and concentration techniques to measures with unbounded support on infinite-dimensional spaces. Beyond Wasserstein-distance estimation, ~\cite{manole2024sharp} establishes sharp nonasymptotic rates for plug-in estimators of OT costs induced by a broader class of cost functions satisfying suitable smoothness and structural assumptions. For classical OT, the linear dependence of the objective on the coupling $\gamma$ makes the analysis tractable. By contrast, the GW objective depends quadratically on the coupling through $\gamma\otimes\gamma$ and is generally nonconvex in $\gamma$, making its analysis substantially more difficult.

The work of Zhang et al.~\cite{zhang2024gromov} initiated the study of empirical GW estimation by considering the sample complexity of the $(2,2)$-GW distance for compactly supported distributions on the Euclidean spaces $\R^{d_x}$ and $\R^{d_y}$, where the $(p,q)$-GW distance is defined by
\[ D_{p,q}(\mu,\nu)
    :=
    \inf_{\gamma \in \Pi(\mu,\nu)}
    \left(
        \iint
        \left|
            \|x-x'\|^q - \|y-y'\|^q
        \right|^p
        \,d\gamma(x,y)\,d\gamma(x',y')
    \right)^{1/p}.\]
Thus, whereas the usual $D_p$ objective compares pairwise distances, the $(2,2)$-GW distance compares squared pairwise distances.  This is less natural from a metric geometry perspective, but is mathematically convenient since the Euclidean identity
\[\|x - x'\|^2 = \|x\|^2 + \|x'\|^2 - 2\langle x,x'\rangle\] 
allows the objective to be expanded to identify the quadratic dependence of $\gamma$ as the matrix $\int xy^\top\,d\gamma(x,y)$. This quadratic term can then be linearized through a finite-dimensional variational representation, reducing the problem to a family of standard optimal transport problems whose objectives depend linearly on $\gamma$. This makes duality and empirical-process techniques available. Using the variational representation, Zhang et al.~\cite[Theorem~4.2]{zhang2024gromov} show that the plug-in error
$\E\bigl|D_{2,2}^2(\widehat\mu_n,\widehat\nu_n)
-D_{2,2}^2(\mu,\nu)\bigr|$
is bounded at the rate
$n^{-2/((d_x\wedge d_y)\vee4)}
(\log n)^{\mathbf 1_{\{d_x\wedge d_y=4\}}}$,
which is sharp up to the logarithmic factor in the critical case $d_x\wedge d_y=4$ and has the same form as the empirical $W_2^2$ rate for compactly supported measures~\cite{fournier2015rate}, with the dimension $d$ replaced by the smaller ambient dimension $d_x\wedge d_y$. Later work by Kato and Wang~\cite{kato2025convergence} extends this sample complexity to unbounded marginals under finite polynomial-moment assumptions.

This linearization approach is not restricted to the $(2,2)$ case. When $p=2r$ and $q=2k$ are even integers, the powered functional $D_{2r,2k}^{2r}$ admits a finite polynomial expansion, and recent work by Paliy~\cite{paliy2026empirical} extends the variational reduction and sample-complexity analysis to this even-order family. Nevertheless, these arguments rely on taking even powers of Euclidean distances and do not extend directly to the usual $D_p=D_{p,1}$ distance. Consequently, although general comparisons with Wasserstein distances directly yield the naive rate $n^{-1/(d_x\vee d_y)}$ (cf. ~\cite[Remark~4.5]{zhang2024gromov}), sharper convergence rates for the empirical estimator of $D_p$ remain open.

\subsection{Main result and proof overview} 
In this paper, we address this gap for the $2$-GW distance. We prove that its empirical plug-in estimator achieves the same
sharp sample-complexity rate as that established for $(2,2)$-GW in
\cite{zhang2024gromov}. Thus, the rate obtained for $(2,2)$-GW
is not merely a consequence of replacing pairwise distances by their squares.
The formal statement is as follows.

\begin{theorem}[Main result]
\label{thm:main-informal}
Let $\mathcal X\subset\R^{d_x}$ and $\mathcal Y\subset\R^{d_y}$ be
compact, and let $\mathcal P(\mathcal X)$ and $\mathcal P(\mathcal Y)$ denote the spaces of Borel probability measures on $\mathcal X$ and $\mathcal Y$, respectively. Then there exists a constant $C$, depending only on
$\diam\mathcal X\vee\diam\mathcal Y$, $d_x$, and $d_y$, such that, for
every $n\geq2$, every $\mu\in\mathcal P(\mathcal X)$, and every
$\nu\in\mathcal P(\mathcal Y)$,
\begin{equation}
\label{eq:upper-bound}
\E\left|
D_2^2(\widehat\mu_n,\widehat\nu_n)-D_2^2(\mu,\nu)
\right|
\leq
C n^{-\frac{2}{(d_x\wedge d_y)\vee4}}
(\log n)^{\mathbf 1_{\{d_x\wedge d_y=4\}}},
\end{equation}
where $\widehat\mu_n$ and $\widehat\nu_n$ are the empirical measures
associated with i.i.d.\ samples from $\mu$ and
$\nu$, independent of each other.
Furthermore, this rate is sharp up to the logarithmic factor in the sense that there exists a constant $c>0$, depending only on
$d_x$ and $d_y$, such that, for all sufficiently large $n$,
\begin{equation}
\label{eq:lower-bound}
\sup_{\substack{
\mu\in\mathcal P(B_1^{d_x}(0)),
\nu\in\mathcal P(B_1^{d_y}(0))
}}
\E\left|
D_2^2(\widehat\mu_n,\widehat\nu_n)-D_2^2(\mu,\nu)
\right|
\geq
c n^{-\frac{2}{(d_x\wedge d_y)\vee4}},
\end{equation}
where
$B_1^d(0)$ is the closed unit ball in $\R^d$.
\end{theorem}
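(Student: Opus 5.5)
The plan follows the abstract's roadmap. First I linearize the GW objective. For $x,x'\in\R^d$, a Crofton-type identity gives
\[
\|x-x'\| = c_d\int_{\Sd^{d-1}\times\R}\bigl(\one\{\langle\theta,x\rangle\le t\}-\one\{\langle\theta,x'\rangle\le t\}\bigr)^2\,d\sigma(\theta)\,dt,
\]
where the $t$-range is restricted to a bounded interval determined by $\diam\mathcal X$. Thus $\|x-x'\|=\|\phi(x)-\phi(x')\|_{L^2}^2$ for the half-space feature map $\phi:\R^{d_x}\to H_x:=L^2$, and similarly $\|y-y'\|=\|\psi(y)-\psi(y')\|_{L^2}^2$.

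Expanding $|d_X-d_Y|^2 = d_X^2+d_Y^2-2d_Xd_Y$, the terms $\iint d_X^2\,d\mu\,d\mu$ and $\iint d_Y^2\,d\nu\,d\nu$ do not depend on $\gamma$. Writing
\[
d_X d_Y = \bigl(a(x)+a(x')-2\langle\phi(x),\phi(x')\rangle\bigr)\bigl(b(y)+b(y')-2\langle\psi(y),\psi(y')\rangle\bigr),
\]
with $a=\|\phi\|^2$ and $b=\|\psi\|^2$, the cross term becomes a linear functional of $\gamma$ plus $\|\int\phi\otimes\psi\,d\gamma\|_{HS}^2$. The mixed terms such as $\int a(x)\langle\psi(y),\psi(y')\rangle$ can be linearized by the same device using augmented features. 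The quadratic term is then linearized by the variational identity $\|M\|_{HS}^2=\sup_A\{2\langle A,M\rangle-\|A\|^2\}$, with $A$ ranging over a bounded ball of Hilbert--Schmidt operators $H_x\to H_y$; the sign of the term determines whether this is a sup or an inf. The result is
\[
D_2^2(\mu,\nu)=F(\mu)+G(\nu)+\inf_{A}\Bigl\{c\|A\|_{HS}^2+\mathrm{OT}_{c_A}(\mu,\nu)\Bigr\},
\]
with costs of the form $c_A(x,y)=-\langle A\phi(x),\psi(y)\rangle+(\text{linear terms})$. The same representation holds for the empirical measures with the same $A$-ball. Hence
\[
\bigl|D_2^2(\widehat\mu_n,\widehat\nu_n)-D_2^2(\mu,\nu)\bigr|\le |F(\widehat\mu_n)-F(\mu)|+|G(\widehat\nu_n)-G(\nu)|+\sup_A\bigl|\mathrm{OT}_{c_A}(\widehat\mu_n,\widehat\nu_n)-\mathrm{OT}_{c_A}(\mu,\nu)\bigr|.
\]
The $F,G$ terms are U/V-statistics of bounded kernels and contribute $O(n^{-1/2})$.

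The main obstacle is the uniform OT term. I would use Kantorovich duality to bound it by $\E\sup_{f\in\mathcal F}(\widehat\mu_n-\mu)f$ plus the analogous $\nu$-term, where $\mathcal F$ is the union over $A$ of $c_A$-concave potentials. To get the rate governed by $d_x\wedge d_y$, I follow Manole--Niles-Weed and place potentials on the lower-dimensional side, say $\mathcal X$ with $d_x\le d_y$. Its conjugate $f=g^{c_A}$ is an infimum over $y$ of $x\mapsto c_A(x,y)-g(y)$.

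The difficulty is that $x\mapsto\langle A\phi(x),\psi(y)\rangle$ is not semiconcave, since $\phi$ is built from indicators. However, $\langle\phi(x),h\rangle$ is an integral of indicators of half-spaces and behaves like a function with singular Hessian. I would recenter the cost by subtracting functions of $x$ alone and of $y$ alone, which does not change the OT problem up to marginal integrals. The aim is to make each $x\mapsto\tilde c_A(x,y)$ equal to $K\|x\|^2$ minus a convex function uniformly in $A,y$, i.e.\ uniformly semiconcave. For instance, I would use the fact that $\int\one\{\langle\theta,x\rangle\le t\}h(\theta,t)\,dt\,d\sigma$ has the form $\int H(\theta,\langle\theta,x\rangle)\,d\sigma$ with $H'=h$ bounded, and combine it with the distance-like terms. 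An infimum of such functions is again uniformly semiconcave and bounded.

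Once that is in place, the bracketing entropy of uniformly semiconcave bounded functions on a compact set in $\R^{d_x}$ is $\epsilon^{-d_x/2}$ in sup norm. Dudley's chaining then gives
\[
\E\sup_{f\in\mathcal F}(\widehat\mu_n-\mu)f\lesssim n^{-2/(d_x\vee4)}(\log n)^{\one\{d_x=4\}}.
\]
The supremum over $A$ is free because it is absorbed into $\mathcal F$. For the $\nu$-side I would use the one-sample/two-sample trick of Manole--Niles-Weed: with $\widehat\mu_n$ the only empirical object on the low-dimensional side, the $\nu$-fluctuation is controlled through the same family of $x$-potentials, evaluated against $\widehat\mu_n$ and $\mu$.

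For the lower bound I would take $\mu=\nu$ supported in $B_1^d$ with $d=d_x\wedge d_y$, embedding the same measure isometrically into the larger ball. Then $D_2(\mu,\nu)=0$, and the task reduces to showing $\E D_2^2(\widehat\mu_n,\widehat\mu_n')\gtrsim n^{-2/d}$ when $d\ge5$.

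For $\mu$ uniform on a $d$-dimensional set, two independent samples are at $W_2$-type distance $n^{-1/d}$. I need to show that no isometry-like matching does better. I would combine the lower bound $D_2\gtrsim$ a difference of distance-distribution invariants, such as the local distribution of nearest-neighbor distances, with the known fact $\E W_2^2(\widehat\mu_n,\widehat\mu_n')\gtrsim n^{-2/d}$.

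For $d<4$ the target rate is $n^{-1/2}$. Here I would take two-point measures $\mu$ and $\nu$ with nearby weights. Then $D_2^2$ depends smoothly and nondegenerately on the empirical weights, and the CLT fluctuation is of order $n^{-1/2}$.
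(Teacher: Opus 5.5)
\textbf{Upper bound.} Your architecture matches the paper's: half-space features, linearizing $\|\int\phi\otimes\psi\,d\gamma\|^2$, Kantorovich duality with potentials on the lower-dimensional side, recentering by one-variable functions, and Dudley's integral. However, the step that carries the difficulty fails as you set it up.

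With $A$ ranging over a bounded Hilbert--Schmidt ball, the recentering you want does not exist. Recentering by functions of $x$ alone and of $y$ alone changes $c_A(x,y_1)-c_A(x,y_2)$ only by a constant. Take a rank-one $A$ with $\langle A\phi(x),\psi(y)\rangle=\langle g,\phi(x)\rangle\langle k,\psi(y)\rangle$. Then this difference is a nonzero multiple of $x\mapsto\langle g,\phi(x)\rangle$ whenever $\langle k,\psi(y_1)\rangle\neq\langle k,\psi(y_2)\rangle$. Since $g$ is only in $L^2$, your ``$H'=h$ bounded'' is unavailable: for $d_x=1$, $g(1,t)=|t|^{-1/3}$, $g(-1,t)=0$, this function has derivative of size $|x|^{-1/3}$ near $0$. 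Semiconcave functions are locally Lipschitz, so the recentered costs cannot all be semiconcave; in general this family is only uniformly $\tfrac12$-H\"older.

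The missing idea is to restrict the parameter to the structured class $\mathcal A=\{8\sqrt{\kappa_{\mathcal X}\kappa_{\mathcal Y}}\int h_{u,v}\,d\eta:\|\eta\|_{\TV}\le2\}$. This loses nothing, because the optimizer $8M_\gamma$ lies in it, via $M_\gamma=\sqrt{\kappa_{\mathcal X}\kappa_{\mathcal Y}}\int h_{x,y}\,d(\gamma-\mu\otimes\nu)$. For such parameters, \eqref{eq:crofton-overlap} turns the cost into $M_\eta(y)+\int\|z-x\|\,d\theta_{\eta,y}(z)$ with $|\theta_{\eta,y}|\le\Lambda_{\mu,\eta}$ \emph{uniformly in $y$}. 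Subtracting $\int\|z-x\|\,d\Lambda_{\mu,\eta}(z)$ then yields concavity on a ball containing $\mathcal X$. The costs themselves are not semiconcave, since atoms of $\eta$ create convex kinks.

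Your sketch also skips two points that this route then needs:
\begin{itemize}
\item The correction depends on $\eta$, so $\sup_\eta|\int\phi_{\mu,\eta}\,d(\widehat\mu_n-\mu)|$ is an extra empirical process to bound.
\item The supremum over the parameter is not free on the $\mathcal Y$ side. Since $\|f_1^{c_1}-f_2^{c_2}\|_\infty\le\|f_1-f_2\|_\infty+\|c_1-c_2\|_\infty$, the entropy of the $\mathcal Y$-potentials includes that of the cost class. The paper controls this through $\log\mathcal N(\varepsilon,\mathcal A,\|\cdot\|_{L^2})\lesssim\varepsilon^{-q}$ with $q=2(d_x+d_y)/(d_x+d_y+1)<2$, from convex-hull entropy plus $\tfrac12$-H\"older continuity of $(u,v)\mapsto h_{u,v}$. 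A norm ball in an infinite-dimensional Hilbert space gives no such bound, and you offer no substitute.
\end{itemize}
By contrast, using the same $c_A$ for the empirical problem is fine in your uncentered formulation. Indeed, \eqref{eq:crofton-overlap} with $x=x'$ shows $\|\phi(x)\|^2$ is constant on the ball, so your ``linear terms'' depend only on the marginals. You should state this; the paper's centered costs instead need a separate cost-stability lemma.

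\textbf{Lower bound for $d:=d_x\wedge d_y\ge5$.} Your reduction to $\E D_2^2(\widehat\mu_n,\widehat\mu_n')\gtrsim n^{-2/d}$ is right. You name the rigidity problem (that no isometry-like matching does better) but do not resolve it, and the invariants you suggest cannot resolve it. For two samples from the same law, global and local distance distributions differ only by parametric-order fluctuations. A nearest-neighbour distance carries weight $1/n$ in any such averaged invariant. So none of them sees the scale $n^{-1/d}$. Likewise, $\E W_2^2\gtrsim n^{-2/d}$ cannot be transferred without an inequality bounding Wasserstein distance modulo isometries by $D_2$.

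The paper imports rigidity from the $(2,2)$ case. On supports of diameter at most $2$, $\bigl|\|x-x'\|^2-\|y-y'\|^2\bigr|\le4\bigl|\|x-x'\|-\|y-y'\|\bigr|$, so $D_{2,2}\le4D_2$. The bound $\E D_{2,2}^2\gtrsim\E\lambda_{\min}(\Sigma_{\widehat\varrho_n})\,n^{-2/d}$ from \cite{zhang2024gromov}, together with $\E\lambda_{\min}(\Sigma_{\widehat\varrho_n})\ge\frac12\lambda_{\min}(\Sigma_\varrho)$ for large $n$, finishes the argument.

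Your two-point construction for the parametric regime is fine. However, your case split omits $d=4$, which also needs it.
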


The key tool in our analysis is a duality theory for $D_{2}^2$ based on
the Crofton formula from integral geometry, which allows us to represent the
Euclidean distance between two points in terms of whether a random affine half-space separates them ~\cite{Santalo2004}. Associating to each point $x$ an $L^2$ feature map $U_x$ recording whether it lies in a given affine half-space, we
write the Euclidean distance as the squared $L^2$ norm between the associated
feature maps:
\[\|x-x'\| = \|U_x - U_{x'}\|^2_{L^2}.\]
This enables the expansion approach in the same spirit as the analysis of $(2,2)$-GW in \cite{zhang2024gromov}, leading to a dual formulation of the form
\begin{equation}\label{eq:duality-intro}
D_{2}^2(\mu,\nu)
= E_{\mu,\nu}
+ \inf_{a \in \mathcal B_{\mu,\nu}}
\Bigl\{ \tfrac{1}{8}\|a\|_{L^2}^2 + T_{c_{a,\mu,\nu}}(\mu,\nu) \Bigr\},
\end{equation}
where $E_{\mu,\nu}$ collects the marginal-dependent terms,
$\mathcal B_{\mu,\nu}$ is a norm-bounded subset of an infinite-dimensional $L^2$
space, and $T_{c_{a,\mu,\nu}}$ is an ordinary OT problem with cost
$c_{a,\mu,\nu}$ parameterized by the auxiliary parameter $a$
(Theorem~\ref{thm:global-duality}). This formulation requires no 
compactness or centering assumption on the marginals, and in particular applies directly
to the empirical measures $\widehat\mu_n$ and $\widehat\nu_n$. Although the metric entropy of the parameter class $\mathcal B_{\mu,\nu}$ is not tractable due to the infinite-dimensionality of the ambient $L^2$ space, under the additional compactness assumption on the marginals, the optimization in \eqref{eq:duality-intro} can instead be restricted to a more structured parameter class $\mathcal A$ whose metric entropy can be controlled (Corollary \ref{cor:compact-duality}). This makes the dual formulation suitable for the sample-complexity analysis.

With the above dual formulation in hand, our sample-complexity analysis
follows standard empirical process arguments based on metric-entropy bounds for classes of
Kantorovich potentials in classical OT and
$(2,2)$-GW ~\cite{manole2024sharp,zhang2024gromov}. However, unlike the
cost functions arising for $W_2^2$ and $(2,2)$-GW and more generally $(2r,2k)$-GW~\cite{paliy2026empirical}, the induced costs
$c_{a,\mu,\nu}$ in our dual formulation need not be semiconcave in either argument; without further structure, one would only obtain the weaker metric-entropy bounds for Lipschitz classes and hence the slower $W_1$-type rate $n^{-1/(d_x\wedge d_y)}$ when $d_x\wedge d_y>2$. We overcome this issue by recentering the costs by a correction term depending only on one marginal (see the proof outline below), with which we recover the concavity structure needed for the sharp metric-entropy bounds underlying the target rate. The full proof of Theorem \ref{thm:main-informal} is given in Section \ref{sec:proofs-main-results}, and below we give an outline.

Without loss of generality, assume
$d_x\leq d_y$. For the upper bound~\eqref{eq:upper-bound},
applying the dual formulation ~\eqref{eq:21dual} to both $(\mu,\nu)$ and
$(\widehat\mu_n,\widehat\nu_n)$ and using the $1$-Lipschitzness of
$c\mapsto T_c$ with respect to the sup norm, the plug-in error decomposes as
\begin{equation}\label{eq:decomp}
\begin{aligned}
&\E\bigl|D_2^2(\widehat\mu_n,\widehat\nu_n)
  -D_2^2(\mu,\nu)\bigr| \\
&\quad\leq
\underbrace{\E\bigl|E_{\widehat\mu_n,\widehat\nu_n}
  -E_{\mu,\nu}\bigr|}_{\text{marginal term}}
+\underbrace{\E\sup_{a\in\mathcal A}
  \bigl\|c_{a,\widehat\mu_n,\widehat\nu_n}
  -c_{a,\mu,\nu}\bigr\|_\infty}_{\text{cost stability}}
+\underbrace{\E\sup_{a\in\mathcal A}
  \bigl|T_{c_{a,\mu,\nu}}(\widehat\mu_n,\widehat\nu_n)
  -T_{c_{a,\mu,\nu}}(\mu,\nu)\bigr|}_{\text{OT estimation error}}.
\end{aligned}
\end{equation}

\begin{figure}[t]
\centering
\begin{tikzpicture}[
    x=0.95cm,
    y=0.90cm,
    box/.style={
        draw,
        rounded corners=2pt,
        align=center,
        text width=3.3cm,
        inner xsep=5pt,
        inner ysep=4pt,
        font=\footnotesize
    },
    wide/.style={
        box,
        text width=4.1cm
    },
    small/.style={
        box,
        text width=3.0cm
    },
    flow/.style={
        -{Stealth[length=2mm,width=1.5mm]},
        line width=0.55pt
    },
    transfer/.style={
        flow,
        dashed
    }
]

\node[small] (embedding) at (-4.4,0)
    {Half-space representation
     {\scriptsize [Cor.~\ref{cor:crofton-centered}]}};

\node[small] (duality) at (0,0)
    {Compact dual formulation
     {\scriptsize [Cor.~\ref{cor:compact-duality}]}};

\node[small] (decomposition) at (4.2,0)
    {Decomposition~\eqref{eq:decomp}};

\node[small] (marginal) at (-4.4,-2.2)
    {Marginal term: bounded $V$-statistics
     {\scriptsize [Lem.~\ref{lem:v-statistics}]}};

\node[small] (stability) at (0,-2.2)
    {Cost stability: half-space VC bound
     {\scriptsize [Lem.~\ref{lem:cost-stability}]}};

\node[small] (oterror) at (4.2,-2.2)
    {OT estimation error
     [\scriptsize Sec.~\ref{sec:pf-upper}, Step~3.3]};

\node[small] (parametric) at (-2.2,-4)
    {Parametric rate $\lesssim n^{-1/2}$
     };

\node[small] (recentering) at (8.8,-3.85)
    {Marginal recentering: concave corrections and costs
     {\scriptsize [Prop.~\ref{lem:signed-mixture},
     Eqs.~\eqref{eq:phi}--\eqref{eq:positive-mixture}]}};

\node[wide] (empirical) at (4.2,-3.85)
    {Empirical-process decomposition via Kantorovich duality
     ~\eqref{eq:G22-three-terms}};

\node[wide] (xside) at (4.2,-5.7)
    {$\mathcal X$-side potentials: concave-class entropy
     $\varepsilon^{-d_x/2}$
     {\scriptsize [Lem.~\ref{lem:reg of F},
     Eq.~\eqref{eq:metric entropy conc}]}};

\node[wide] (correction) at (1.0,-7.95)
    {Correction functions: concave-class entropy
     $\varepsilon^{-d_x/2}$
     {\scriptsize [Lem.~\ref{prop:ambient-regularity},
     Eq.~\eqref{eq:metric entropy conc}]}};

\node[wide] (yside) at (6.9,-7.95)
    {$\mathcal Y$-side potentials: entropy of $\mathcal A$ with
     additional exponent $<2$
     {\scriptsize [Lem.~\ref{lem:convex-hull}]}};

\node[wide] (dudley) at (4.2,-9.85)
    {
     $\lesssim\psi_{d_x}(n)$ by Dudley's entropy integral};

\node[small] (conclusion) at (0,-11.5)
    {Upper bound $\lesssim\psi_{d_x}(n)$ \eqref{eq:upper-bound}};

\draw[flow]
    (embedding) -- (duality);

\draw[flow]
    (duality) -- (decomposition);

\draw[flow]
    (decomposition) -- (oterror);

\draw[flow]
    (decomposition.south)
    to[out=-90,in=30]
    (stability.north east);

\draw[flow,rounded corners=8pt]
    (decomposition.south)
    -- ++(0,-0.6)
    -| (marginal.north);

\draw[flow]
    (marginal.south)
    to[out=-90,in=120]
    (parametric.north west);

\draw[flow]
    (stability.south)
    to[out=-90,in=60]
    (parametric.north east);

\draw[flow]
    (oterror) -- (empirical);

\draw[flow]
    (recentering.west) -- (empirical.east);

\draw[flow]
    (empirical) -- (xside);

\draw[flow,rounded corners=8pt]
    (empirical.west)
    -| (correction.north);

\draw[transfer]
    (xside.south east)
    to[out=-15,in=90]
    node[
        pos=0.8,
        above=3pt,
        text width=2.7cm,
        font=\scriptsize,
        align=left
    ]
    {entropy transfer via $c$-transform
     [Eq.~\eqref{eq:metric-entropy-tildeG}]}
    (yside.north east);

\draw[flow]
    (empirical.south east)
    to[out=-75,in=90]
    (yside.north);

\draw[flow]
    (correction.south)
    to[out=-90,in=150]
    (dudley.north west);

\draw[flow]
    (xside.south) -- (dudley.north);

\draw[flow]
    (yside.south)
    to[out=-90,in=30]
    (dudley.north east);

\draw[flow,rounded corners=8pt]
    (parametric.south)
    -- (-2.2,-11.5)
    -- (conclusion.west);

\draw[flow]
    (dudley.south)
    to[out=-90,in=0]
    (conclusion.east);

\end{tikzpicture}
\caption{Proof flow chart for the upper bound  \eqref{eq:upper-bound} (under the assumption $d_{x} = d_x \wedge d_y$). $\psi_{d_x}(n)$ denotes the target rate $n^{-\frac{2}{(d_x\wedge d_y)\vee4}}
(\log n)^{\mathbf 1_{\{d_x\wedge d_y=4\}}}$.}
\label{fig:proof-flow}
\end{figure}

The analysis of the marginal term involves $V$-statistics with bounded kernels (Lemma \ref{lem:v-statistics}), while the
cost stability term can be controlled via a standard VC bound argument (Lemma \ref{lem:cost-stability}).
Both terms have the parametric rate $n^{-1/2}$.

The dominant term is the OT estimation error. By Kantorovich duality,
it can be bounded by suprema of empirical processes over the associated
classes of dual potentials, whose metric entropy determines the rate via Dudley's entropy integral. However, this does not directly yield the $W_2^2$-type target rate due to the absence of some concavity structure, attributed to the signedness of the measure $\theta_{a,y}$ in the representation
\[
c_{a,\mu,\nu}(x,y)
=
M_a(y)+\int_{\mathcal X}\|z-x\|\,d\theta_{a,y}(z)
\]
for each $a\in\mathcal A$ and $y\in\mathcal Y$ (Proposition \ref{lem:signed-mixture}). Nevertheless, we can construct a finite positive measure $\Lambda_{\mu,a}$ for which $|\theta_{a,y}| \leq \Lambda_{\mu,a}$, and adding the correction $\phi_{\mu,a}(x) = -\int_{\mathcal X}\|z-x\|\,d\Lambda_{\mu,a}(z)$ offsets the signedness of $\theta_{a, y}$, after which both $\phi_{\mu,a}$ and the recentered cost $c_{a,\mu,\nu}(\cdot,y) + \phi_{\mu,a}$ extend to concave functions on a closed ball containing $\mathcal X$. Since the correction is a function of $x\in \mathcal{X}$, the correction
changes $T_c(\alpha,\beta)$ only by the marginal integral
$\int\phi_{\mu,a}\,d\alpha$. Thus, the OT estimation error can in turn be bounded by
suprema of empirical processes indexed by the dual potential classes of the
recentered costs and an additional term indexed by the correction
functions (cf. \eqref{eq:G22-three-terms} in Step 3.2 of Section \ref{sec:pf-upper}).

The correction functions and the recentered potentials on the
$\mathcal X$-side are concave with sufficient regularity (Lemmas \ref{prop:ambient-regularity} and \ref{lem:reg of F}), and their metric
entropies scale as $\varepsilon^{-d_x/2}$, which yields the target rate through Dudley's entropy integral. The metric entropy bound for the potential class on the $\mathcal{X}$-side is transferred to the potential class on the higher-dimensional
$\mathcal Y$-side through the $c$-transform \eqref{eq:metric-entropy-tildeG}, which
parallels the lower-complexity adaptation principle for empirical
OT~\cite{groppe2024lower,hundrieser2024empirical}. As a byproduct of this transfer, the potential class carries an additional contribution from the metric entropy of
the parameter class $\mathcal A$. While this contribution may exceed the exponent $d_x/2$
observed for the other classes, the corresponding exponent is always
strictly smaller than $2$, and Dudley's entropy integral still gives the
parametric rate $n^{-1/2}$, which equals the target rate since necessarily
$d_x < 4$ in this case. All the empirical-process terms are therefore of
order at most the target rate in~\eqref{eq:upper-bound}, which absorbs the parametric terms. Figure~\ref{fig:proof-flow} describes a flow chart of where the different tools are introduced and how they are combined to prove Theorem~\ref{thm:main-informal}.

The lower bound~\eqref{eq:lower-bound} follows readily from the corresponding
lower bound argument for the $(2,2)$ case in
\cite[Theorem~4.2]{zhang2024gromov}, together with a comparison inequality
between $D_{2}^2$ and $D_{2,2}^2$.

\subsection{Organization}
The rest of the paper is organized as follows. Section~\ref{sec:preliminaries} reviews the necessary background on OT and Bochner integration. Section~\ref{sec:crofton-halfspace} introduces the half-space representation of the Euclidean distance. Section~\ref{sec:duality} presents the dual formulation for $2$-GW, and the main body concludes with the proof of the main theorem in Section~\ref{sec:proofs-main-results}. Appendices~\ref{sec:crofton-identities}--\ref{sec:sample-proofs} contain proofs of technical results omitted in the main body, and Appendix~\ref{ex:failure-semiconcavity} presents a one-dimensional example showing that the induced costs need not be semiconcave.

\subsection{Notation} Throughout the paper, we fix dimensions $d_x \geq 1$ and $d_y \geq 1$, and for
probability measures $\mu$ on $\R^{d_x}$ and $\nu$ on $\R^{d_y}$, we write
$\widehat\mu_n = \frac 1n \sum_{i=1}^n \delta_{X_i}$ and
$\widehat\nu_n = \frac 1n \sum_{i=1}^n \delta_{Y_i}$ for the empirical
measures constructed from i.i.d. samples $X_1,\ldots,X_n \sim \mu$ and
$Y_1,\ldots,Y_n \sim \nu$, with the two samples independent of each other. We use $D_2=D_{2,1}$ throughout. For nonnegative quantities $A$ and $B$, we write $A \lesssim B$ if
$A \le CB$ for some constant $C > 0$, and
$A \lesssim_{p_1,\ldots,p_k} B$ when $C$ is allowed to depend only on the
parameters $p_1,\ldots,p_k$. For a real Hilbert space $H$, we denote by $\|\cdot\|_H$ and
$\langle \cdot, \cdot \rangle_H$ the corresponding norm and inner product.
For a positive measure $\rho$ on $S$, we write $L^1(S,\rho)$ for the
corresponding scalar $L^1$ space.
When $H = \R^d$, we drop the subscripts and simply write $\|\cdot\|$ and
$\langle \cdot, \cdot \rangle$. We write $B^d_r(x)$ for the closed Euclidean
ball in $\R^d$ of radius $r$ centered at $x$, and $\Sd^{d-1}$ for the unit
sphere in $\R^d$, with $\sigma_{d-1}$ the uniform probability measure on
$\Sd^{d-1}$. Given a topological space $\mathcal{X}$, let
$\mathcal P(\mathcal X)$ be the collection of Borel probability measures on
$\mathcal{X}$. For $p\geq1$, let $\mathcal P_p(\R^d)$ denote the measures
$\mu\in\mathcal P(\R^d)$ satisfying $\int\|x\|^p\,d\mu(x)<\infty$. Given a Borel measure $\mu$ on $\mathcal{X}$ and a Borel
measurable map $T: \mathcal{X} \to \mathcal{Y}$ to another topological space
$\mathcal{Y}$, let $T_\sharp \mu$ denote the pushforward of $\mu$ under $T$.
Furthermore, let $L^2(\mathcal X,\mu)$ be the space of $\mu$-a.e.-equivalence
classes of Borel measurable functions $f:\mathcal X\to\R$ satisfying
$\int_{\mathcal X}|f|^2\,d\mu<\infty$, equipped with the norm
$\|f\|_{L^2(\mathcal X,\mu)}
=(\int_{\mathcal X}|f|^2\,d\mu)^{1/2}$ and inner product
$\langle f,g\rangle_{L^2(\mathcal X,\mu)}
=\int_{\mathcal X}fg\,d\mu$. For a real-valued function $f$ on a set $S$, we write
$\|f\|_{\infty,S} = \sup_{x \in S} |f(x)|$, abbreviated $\|f\|_\infty$ when
$S$ is clear from context. Given a
finite signed Borel measure $\eta$ on $\mathcal X$, we write $|\eta|$ for
its total variation measure and $\|\eta\|_{\TV} = |\eta|(\mathcal X)$ for its
total variation norm. We denote by
$\mathcal{N}(\varepsilon, \mathcal{F}, \|\cdot\|)$ the
$\varepsilon$-covering number of a function class $\mathcal{F}$ with respect
to a norm $\|\cdot\|$. Given $Q \subset \R^d$ and a Lipschitz continuous
function $f:Q\to\R$, let $\|f\|_{\Lip(Q)}$ denote its Lipschitz
constant, abbreviated $\|f\|_{\Lip}$ when $Q$ is clear. For a function on a product space, $\|f\|_{\Lip,x}$ and $\|f\|_{\Lip,y}$ denote the uniform Lipschitz constants in the first and second variables, respectively. For a subset $A$ of a normed vector space, $\operatorname{aco}(A)$ denotes its absolutely convex hull and $\overline{\operatorname{aco}}(A)$ its norm closure. We write $\E_{X\sim \mu} f(X) = \int_\mathcal{X} f\,d\mu$ for $\mu \in \mathcal{P}(\mathcal{X})$. All expectations involving suprema over possibly nonmeasurable function classes are understood as outer expectations.

\subsection{Acknowledgments}
The authors thank Promit Ghosal for suggesting the problem and for valuable feedback on the exposition.

%% file: sections/preliminaries.tex
\section{Preliminaries} \label{sec:preliminaries}
\subsection{Background on optimal transport} 
Let $\mathcal X$ and $\mathcal Y$ be Polish spaces equipped with their Borel
$\sigma$-algebras, and let $\mu \in \mathcal P(\mathcal X)$ and
$\nu \in \mathcal P(\mathcal Y)$. Given a measurable cost function
$c:\mathcal X \times \mathcal Y \to (-\infty,+\infty]$ satisfying
$c(x,y)\geq \ell_{\mathcal X}(x)+\ell_{\mathcal Y}(y)$ for some
integrable functions $\ell_{\mathcal X}\in L^1(\mathcal X,\mu)$ and
$\ell_{\mathcal Y}\in L^1(\mathcal Y,\nu)$, the classical OT problem is
\[
T_c(\mu,\nu)
=
\inf_{\pi\in\Pi(\mu,\nu)}
\int_{\mathcal X \times \mathcal Y} c(x,y)\,d\pi(x,y),
\]
where $\Pi(\mu,\nu)$ is the set of couplings of $\mu$ and $\nu$. In this
formulation, a coupling $\pi$ specifies how mass from $\mu$ is matched with
mass from $\nu$, and the objective minimizes the average cost of doing so.
The objective is linear in $\pi$ and the marginal constraints are linear, so
the problem can be viewed as a linear program over couplings. Under mild assumptions on the cost\footnote{Namely, $c$ is lower
semicontinuous, $T_c(\mu,\nu)<\infty$, and the functions
$\ell_{\mathcal X}$ and $\ell_{\mathcal Y}$
can be chosen upper semicontinuous.}, this linear program admits strong duality, and its
dual, known as the Kantorovich dual problem, is given by
\[
T_c(\mu,\nu)
=
\sup_{\substack{
(\varphi,\psi)\in C_b(\mathcal X)\times C_b(\mathcal Y)\\
\varphi(x)+\psi(y)\leq c(x,y)
}}
\left\{
\int_{\mathcal X}\varphi\,d\mu
+
\int_{\mathcal Y}\psi\,d\nu
\right\}.
\]
Define the $c$-transform of a function
$f:\mathcal X\to\mathbb R\cup\{-\infty\}$ that is not identically
$-\infty$ by
$f^c(y):=\inf_{x\in\mathcal X}\{c(x,y)-f(x)\}$, and analogously for
functions on $\mathcal Y$. The supremum above may then be taken over $c$-conjugate pairs
$(\varphi,\psi)\in L^1(\mathcal X,\mu)\times L^1(\mathcal Y,\nu)$ satisfying
$\varphi^c=\psi$ and $\psi^c=\varphi$
\cite[Thm.~5.10(i)]{villani2009optimal}. If the cost is continuous and bounded,
then the supremum is attained by some pair $(\varphi,\varphi^c)$
\cite[Thm.~5.10(iii)]{villani2009optimal}. Under the boundedness assumption on the cost, we
will often normalize the potentials by requiring
$\sup_{\mathcal X}\varphi=0$. For comprehensive treatments of OT, see
\cite{peyre2019computational,santambrogio2015optimal,villani2009optimal}.

\subsection{Bochner integration}
We recall standard facts concerning Bochner integration of Hilbert-space-valued functions needed to establish a duality theory for $2$-GW. The following discussion is drawn from \cite[Section~1.2]{hytonen2016analysis}.

Let $H$ be a separable Hilbert space, let $S$ be a topological space, and let $(S,\Sigma,\rho)$ be a finite positive Borel measure space. An $H$-valued function $F:S\to H$ is called Bochner integrable with respect to $\rho$ if it is Borel measurable and $\int_S\|F(s)\|_H\,d\rho(s)<\infty$. Its Bochner integral is denoted by $\int_S F(s)\,d\rho(s)\in H$, and we write $L^1(S,\rho;H)$ for the corresponding Bochner space equipped with $\|F\|_{L^1(S,\rho;H)}:=\int_S\|F(s)\|_H\,d\rho(s)$. The integral is constructed in a similar fashion to the Lebesgue integral, relying on the fact that $F$ is a $\rho$-a.e.\ limit of simple functions $F_n=\sum_{k=1}^{m_n}\mathbf{1}_{A_{n,k}}h_{n,k}$, where $A_{n,k}\in\Sigma$ and $h_{n,k}\in H$, whose integrals are defined by $\int_S F_n\,d\rho:=\sum_{k=1}^{m_n}\rho(A_{n,k})h_{n,k}$. The sequence may be chosen so that $F_n\to F$ in $L^1(S,\rho;H)$, and $\int_S F\,d\rho$ is defined as the limit of $\int_S F_n\,d\rho$ in $H$. A useful sufficient condition for Bochner integrability is that $F$ is continuous and bounded, provided that $\rho$ is finite. Moreover, Bochner integrals satisfy the triangle inequality $\left\|\int_S F(s)\,d\rho(s)\right\|_H \leq \int_S\|F(s)\|_H\,d\rho(s).$

For our analysis, we will use the extension of Bochner integration to finite signed measures. If $\rho$ is signed, we consider its Jordan decomposition
$\rho = \rho^{+} - \rho^{-}$, where $\rho^{+}$ and $\rho^{-}$ are positive measures on $S$ with $\rho^{+}\perp \rho^{-}$. We say that $F$ is Bochner integrable with respect to $\rho$ if it is Bochner integrable with respect to both $\rho^{+}$ and $\rho^{-}$, and define its Bochner integral with respect to $\rho$ as
\[
\int_S F(s)\,d\rho(s)
    :=
    \int_S F(s)\,d\rho^+(s)
    -
    \int_S F(s)\,d\rho^-(s).
\]
In this sense, an equivalent condition for Bochner integrability is that  $F$ is Borel measurable and
$\int_S \|F(s)\|_H\,d|\rho|(s)<\infty$. Furthermore, the following fundamental property of the Bochner integral remains valid under this extension.
\begin{lemma}[Bochner integration commutes with bounded linear operators]
\label{lem:Bochner}
Let $F:S\to H$ be Bochner integrable with respect to $\rho$, potentially signed. If $W$ is another separable Hilbert space and $T:H\to W$ is a bounded linear operator, then $TF:S\to W$ is Bochner integrable and $T\int_S Fd\rho=\int_S TFd\rho.$
In particular, for every $h\in H$, $\left\langle \int_S F(s)d\rho(s),h\right\rangle_H=\int_S \langle F(s),h\rangle_H d\rho(s).$
\end{lemma}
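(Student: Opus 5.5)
The plan is to reduce the signed case to the positive case through the Jordan decomposition, and to prove the positive case by approximating with simple functions and passing to the limit.

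\emph{Positive $\rho$.} Fix a sequence of simple functions $F_n=\sum_k \mathbf 1_{A_{n,k}}h_{n,k}$ with $F_n\to F$ in $L^1(S,\rho;H)$, as in the construction of the Bochner integral.

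\begin{enumerate}
\item \emph{Measurability and integrability of $TF$.} The map $TF$ is Borel measurable, being the composition of a Borel map with a continuous map. It is integrable because
\[
\int_S\|TF\|_W\,d\rho\le\|T\|\int_S\|F\|_H\,d\rho<\infty .
\]
\item \emph{The simple case.} The functions $TF_n=\sum_k \mathbf 1_{A_{n,k}}Th_{n,k}$ are simple $W$-valued functions. By linearity of $T$,
\[
T\int_S F_n\,d\rho=\sum_k\rho(A_{n,k})\,Th_{n,k}=\int_S TF_n\,d\rho .
\]
\item \emph{Passing to the limit.} The convergence transfers to $W$, since
\[
\|TF_n-TF\|_{L^1(S,\rho;W)}\le\|T\|\,\|F_n-F\|_{L^1(S,\rho;H)}\to0 .
\]
Hence $TF_n\to TF$ in $L^1(S,\rho;W)$, and by definition $\int_S TF_n\,d\rho\to\int_S TF\,d\rho$ in $W$. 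On the other side, $\int_S F_n\,d\rho\to\int_S F\,d\rho$ in $H$, and continuity of $T$ gives $T\int_S F_n\,d\rho\to T\int_S F\,d\rho$. Equating the two limits proves the identity.
\end{enumerate}

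The only point needing care is that the limit defining the Bochner integral does not depend on the approximating sequence. This is standard: the triangle inequality for simple functions shows the limit is well defined.

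\emph{Signed $\rho$.} Write $\rho=\rho^+-\rho^-$. By assumption $F$ is Bochner integrable with respect to both $\rho^\pm$, so by the positive case $TF$ is as well, and hence $TF$ is Bochner integrable with respect to $\rho$. Applying the positive case to each part and using linearity of $T$,
\[
T\int_S F\,d\rho=T\int_S F\,d\rho^+-T\int_S F\,d\rho^-=\int_S TF\,d\rho^+-\int_S TF\,d\rho^-=\int_S TF\,d\rho .
\]

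\emph{The scalar identity.} For the final claim, take $W=\R$ and $T=\langle\cdot,h\rangle_H$. This functional is bounded with norm $\|h\|_H$, and the Bochner integral of a scalar function is its Lebesgue integral, so the general statement gives
\[
\left\langle \int_S F(s)\,d\rho(s),h\right\rangle_H=\int_S \langle F(s),h\rangle_H\,d\rho(s).
\]
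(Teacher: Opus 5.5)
Your proof is correct, and it is the standard argument: the paper gives no proof of its own and defers to the Bochner integration background of Hyt\"onen et al.\ (Section~1.2), where the positive case is proved exactly as you do (linearity on simple functions, then passage to the limit using $\|TF_n-TF\|_{L^1(S,\rho;W)}\le\|T\|\,\|F_n-F\|_{L^1(S,\rho;H)}$), and the signed case then follows by applying it to $\rho^+$ and $\rho^-$ separately. The only step you assert without argument is that the scalar Bochner integral coincides with the Lebesgue integral; this takes one line, since the two integrals agree on simple functions and both are continuous with respect to $L^1$ convergence.
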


In this paper, we repeatedly use Bochner integrals of $L^2$-valued
functions. Although each such integral may equivalently be viewed as the
$L^2$-equivalence class represented by the corresponding pointwise
integral (by Lemma~\ref{lem:Bochner} and Fubini, see for instance the
proof of Lemma~\ref{lem:Mgamma representation}), the Bochner formulation
is more convenient for the metric-entropy analysis, as explained in
Section~\ref{sec:duality}. We therefore use the Bochner formulation for all
$L^2$-valued integrals throughout.

%% file: sections/crofton-2.tex
\section{Half-space representation of the Euclidean distance}
\label{sec:crofton-halfspace}
In this section, we develop a half-space representation of the Euclidean distance. Our representation is based on the Crofton formula. Classically, it refers to a result of Cauchy and Crofton which expresses the arc length of a planar curve in terms of the number of intersection points between the curve and a random line. This can be generalized to higher dimensions, where the volume of a $k$-dimensional submanifold in $\R^n$ is recovered by integrating over the number of intersection points it makes with a family of $(n-k)$-dimensional submanifolds ~\cite{Brothers1966,Chern1952,Federer1954,Howard1993}. For our purposes, we use the Crofton formula in the special case of measuring the length of the line segment between two points. We first state this result in an abstract metric space setting. By \cite[Remark 2.9, Corollary 5.4]{ChatterjiDrutuHaglund2010}, any metric space $(\mathcal{X},d)$ that embeds isometrically into an $L^1$ space admits a measured-wall representation of its metric. Informally, there exists a family of walls $\mathcal W$ equipped with a measure $\Lambda$ such that:
\[
    d(x,x')=\int_{\mathcal W}\mathbf{1}_{\{W\text{ separates }x,x'\}}d\Lambda(W).
\]
Here a wall $W = \{H,H^c\}$ is a partition $\mathcal{X}=H\sqcup H^c$. In this sense, the walls play the role of the random lines in the classical Crofton formula. If we denote by $H_W$ any one side of a wall $W$, we can equivalently write
\begin{equation}\label{eq:general-crofton}
d(x,x')=\int_{\mathcal W}(\mathbf{1}_{H_W}(x) - \mathbf{1}_{H_W}(x'))^2 d\Lambda(W).
\end{equation}
For Euclidean spaces, i.e., $d(x,x') = \|x - x'\|$, the walls are partitions of the space by affine half-spaces \cite{Santalo2004}. Each half-space in $\R^d$ can be parameterized by $\xi = (\theta, t) \in \Sd^{d-1}\times \R$ as $H_\xi = \{x\in \R^d: \theta\cdot x > t\}$, where $\theta$ is the normal direction and $t$ is the offset from the origin. The following theorem formalizes the half-space representation of Euclidean distance by choosing a suitable measure on the parameter space.

\begin{theorem}[Global half-space representation]
\label{theorem:global-crofton}
Let $\mathcal W_d:=\Sd^{d-1}\times\R$ and
$a_d:=\E_{\theta\sim\sigma_{d-1}}|\theta_1|>0$, where
$\sigma_{d-1}$ is the uniform probability measure on $\Sd^{d-1}$.
Let $dt$ be Lebesgue measure on $\R$, and define the
measure $\Lambda_d$ on $\mathcal W_d$ by
$d\Lambda_d(\theta,t):=a_d^{-1}d\sigma_{d-1}(\theta)\,dt$. Then, for
every $x,x'\in\R^d$,
\begin{equation}\label{eq:global-crofton}
\|x-x'\|
=
\int
\bigl(\one_{H_\xi}(x)-\one_{H_\xi}(x')\bigr)^2
\,d\Lambda_d(\xi).
\end{equation}
\end{theorem}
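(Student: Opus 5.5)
The plan is to compute the right-hand side of \eqref{eq:global-crofton} directly, integrating first in the offset $t$ for a fixed direction $\theta$, and then in $\theta$. The integrand $\bigl(\one_{H_\xi}(x)-\one_{H_\xi}(x')\bigr)^2$ takes only the values $0$ and $1$. It equals $1$ exactly when the hyperplane $\{\theta\cdot z=t\}$ separates $x$ and $x'$, i.e.\ when exactly one of $\theta\cdot x>t$ and $\theta\cdot x'>t$ holds. So the integrand is nonnegative and measurable, and Tonelli's theorem lets us write the integral against $\Lambda_d$ as the iterated integral $a_d^{-1}\int_{\Sd^{d-1}}\int_\R(\cdots)\,dt\,d\sigma_{d-1}(\theta)$.

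For fixed $\theta$, set $s=\theta\cdot x$ and $s'=\theta\cdot x'$. The integrand is $1$ precisely for $t\in[\min(s,s'),\max(s,s'))$, which is a half-open interval. Hence the inner integral equals $|s-s'|=|\theta\cdot(x-x')|$.

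It remains to show that $\int_{\Sd^{d-1}}|\theta\cdot v|\,d\sigma_{d-1}(\theta)=a_d\|v\|$ for every $v\in\R^d$. If $v=0$ both sides vanish. Otherwise choose an orthogonal matrix $Q$ with $Qv=\|v\|e_1$. Then $\theta\cdot v=\|v\|\,(Q\theta)_1$, and since $\sigma_{d-1}$ is rotation invariant, the integral equals $\|v\|\,\E_{\theta\sim\sigma_{d-1}}|\theta_1|=a_d\|v\|$. Multiplying by $a_d^{-1}$ gives \eqref{eq:global-crofton}.

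Finally, $a_d>0$ because $|\theta_1|$ is nonnegative and is not $\sigma_{d-1}$-a.e.\ zero. Indeed, $\{\theta_1=0\}$ is a great subsphere, which is $\sigma_{d-1}$-null for $d\ge2$; for $d=1$ we simply have $a_1=1$.

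There is no real obstacle here. The only points needing care are the measurability needed to apply Tonelli, which is clear since $H_\xi$ is an open condition jointly in $(\theta,t,x)$, and the rotation-invariance step that reduces everything to the constant $a_d$.
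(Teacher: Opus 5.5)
Your proposal is correct and follows essentially the same route as the paper: you integrate out the offset $t$ to get $|\theta\cdot(x-x')|$ for each fixed direction, then use the rotational invariance of $\sigma_{d-1}$ (via an orthogonal $Q$ with $Q(x-x')=\|x-x'\|e_1$) to identify the spherical average as $a_d\|x-x'\|$. Your remarks on Tonelli, measurability, and $a_d>0$ are sound and make explicit details the paper leaves implicit.
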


The proof of Theorem~\ref{theorem:global-crofton} is given in Appendix~\ref{pf:global-crofton}, and
is a direct consequence of the rotational invariance of the measure
$\sigma_{d-1}$. Since the measure $\Lambda_d$ is not finite, the mappings $\xi \mapsto \one_{H_\xi}(x)$ do not belong to $L^2(\mathcal{W}_d, \Lambda_d)$. Nevertheless, centering these functions with
respect to a probability measure $\mu \in \mathcal{P}_1(\R^d)$ with finite first moment, we obtain the following squared $L^2$-representation of the Euclidean distance.

\begin{corollary}[Global centered $L^2$ embedding]
\label{cor:global-crofton-centered}
Let $\mu\in\mathcal P_1(\R^d)$. For each $x\in \R^{d}$ and $\xi \in \mathcal{W}_d$, define
\begin{equation}\label{eq:global-centered-crofton}
\widetilde U_x^\mu(\xi)
=
\one_{H_\xi}(x)-\mu(H_\xi).
\end{equation}
Then $\widetilde U_x^\mu\in L^2(\mathcal W_d,\Lambda_d)$ for every
$x\in\R^d$, and 
\begin{equation}\label{eq:global-centeredl2embedding}
\|x-x'\|
=
\|\widetilde U_x^\mu-\widetilde U_{x'}^\mu\|^2_{L^2(\mathcal{W}_d, \Lambda_d)}.
\end{equation}
Furthermore, the map $x\mapsto \widetilde U_x^\mu$ is continuous and Bochner
integrable with respect to $\mu$, and
$\int \widetilde U_x^\mu\,d\mu(x)=0$ in $L^2(\mathcal{W}_d, \Lambda_d)$.
\end{corollary}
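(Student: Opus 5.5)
The plan is to reduce every claim to explicit integrals over $\mathcal W_d$ and to invoke Theorem~\ref{theorem:global-crofton} as the basic identity.

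\textbf{Square integrability.} First I show $\widetilde U_x^\mu\in L^2(\mathcal W_d,\Lambda_d)$. Write $\mu(H_\xi)=\int \one_{H_\xi}(z)\,d\mu(z)$, so that
\[
\widetilde U_x^\mu(\xi)=\int\bigl(\one_{H_\xi}(x)-\one_{H_\xi}(z)\bigr)\,d\mu(z).
\]
Jensen's inequality gives
\[
|\widetilde U_x^\mu(\xi)|^2\le\int\bigl(\one_{H_\xi}(x)-\one_{H_\xi}(z)\bigr)^2\,d\mu(z).
\]
Integrating against $\Lambda_d$ and applying Tonelli together with Theorem~\ref{theorem:global-crofton} yields
\[
\|\widetilde U_x^\mu\|^2_{L^2}\le\int\|x-z\|\,d\mu(z)<\infty,
\]
since $\mu\in\mathcal P_1(\R^d)$. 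Measurability in $\xi$ is clear, because $\mu(H_\xi)$ is measurable in $(\theta,t)$.

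\textbf{Distance identity.} The identity \eqref{eq:global-centeredl2embedding} is immediate: the centering terms cancel, so $\widetilde U_x^\mu-\widetilde U_{x'}^\mu=\one_{H_\xi}(x)-\one_{H_\xi}(x')$, and \eqref{eq:global-crofton} applies directly. The same identity shows that $x\mapsto\widetilde U_x^\mu$ is $\tfrac12$-H\"older, since $\|\widetilde U_x^\mu-\widetilde U_{x'}^\mu\|=\|x-x'\|^{1/2}$, and in particular it is continuous.

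\textbf{Bochner integrability.} Continuity gives Borel measurability into the separable Hilbert space $L^2(\mathcal W_d,\Lambda_d)$; separability holds because $\Lambda_d$ is $\sigma$-finite on a second-countable space. By the first step,
\[
\|\widetilde U_x^\mu\|\le\Bigl(\int\|x-z\|\,d\mu(z)\Bigr)^{1/2}\le 1+\|x\|+\int\|z\|\,d\mu(z),
\]
which is $\mu$-integrable because of the finite first moment. Hence $\int\|\widetilde U_x^\mu\|\,d\mu(x)<\infty$.

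\textbf{Vanishing mean.} To show $\int\widetilde U_x^\mu\,d\mu(x)=0$, I test the integral against an arbitrary $h\in L^2(\mathcal W_d,\Lambda_d)$ and use Lemma~\ref{lem:Bochner}:
\[
\Bigl\langle\int\widetilde U_x^\mu\,d\mu,h\Bigr\rangle=\int\!\!\int\widetilde U_x^\mu(\xi)h(\xi)\,d\Lambda_d(\xi)\,d\mu(x).
\]
Exchanging the order of integration is the main (mild) obstacle, and it requires $\iint|\widetilde U_x^\mu(\xi)h(\xi)|\,d\Lambda_d\,d\mu<\infty$. This follows from Cauchy--Schwarz in $\xi$ together with the integrability bound just established:
\[
\int\|\widetilde U_x^\mu\|\,\|h\|\,d\mu(x)<\infty.
\]
After applying Fubini, the inner integral is $\int(\one_{H_\xi}(x)-\mu(H_\xi))\,d\mu(x)=0$ for every $\xi$. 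Since $h$ is arbitrary, the Bochner integral vanishes.
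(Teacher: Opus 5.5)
Your proof is correct and follows the paper's argument essentially step for step: Jensen plus Tonelli with Theorem~\ref{theorem:global-crofton} for square-integrability, cancellation of the centering term for the distance identity and continuity, and testing against an arbitrary $h\in L^2(\mathcal W_d,\Lambda_d)$ via Lemma~\ref{lem:Bochner} and Fubini for the vanishing mean. The only differences are cosmetic. You obtain $\int\|\widetilde U_x^\mu\|\,d\mu(x)<\infty$ from the pointwise bound $\sqrt a\le 1+a$, whereas the paper applies Cauchy--Schwarz to $\int\|\widetilde U_x^\mu\|^2\,d\mu(x)\le\iint\|x-z\|\,d\mu\,d\mu$. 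You also spell out the separability, measurability, and Fubini-justification details that the paper leaves implicit.
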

The proof of Corollary~\ref{cor:global-crofton-centered} is given in Appendix~\ref{pf:global-crofton-centered}. The centering identity $\int \widetilde U_x^\mu\,d\mu(x)=0$ plays a key role in developing duality theory for the $2$-GW distance.
\subsection{Specialization to compact subsets} When the marginals are compactly supported, which is of interest in our sample complexity analysis, it suffices to
consider the Euclidean distance between points of a compact subset
$\mathcal{X} \subset B_R^d(0)$ for some $R > 0$ large enough. Since an affine hyperplane $\partial H_\xi = \{x\in \R^d: \theta\cdot x = t\}$ intersects $B^d_R(0)$ if and only if $t \in [-R, R]$, we may restrict our parameter space for the collection of half-spaces to $\mathcal{W}_\mathcal{X} = \Sd^{d-1} \times [-R,R]$. In this case, the underlying measure $\Lambda_{d}$ can be replaced by a finite measure upon restricting the Lebesgue measure $dt$ to $[-R,R]$. Moreover, we may replace each $H_\xi$ by its
restriction $H_\xi \cap \mathcal{X}$, and in what follows $H_\xi$ denotes this restriction. Figure~\ref{fig:euclidean-crofton-halfspace} provides an illustration of this parameterization within the ambient ball \(B_R^d(0)\).

\begin{figure}[h]
\centering
\begin{tikzpicture}[scale=1, font=\small, >=Latex]
    \coordinate (O) at (0,0);
    \coordinate (A) at (-1.42,2.81);
    \coordinate (B) at (2.30,-2.15);
    \coordinate (P) at (0.44,0.33);
    \fill[gray!4] (O) circle (2.25);
    \begin{scope}
        \clip (O) circle (2.25);
        \fill[gray!10]
            (A) -- (B) -- (4.2,-0.55) -- (0.6,4.25) -- cycle;
    \end{scope}
    \fill[gray!22]
        plot[smooth cycle, tension=.85] coordinates {
            (-1.05,-0.55) (-0.35,-1.05) (0.75,-0.75) (1.35,0.05)
            (0.78,0.93) (-0.62,0.83) (-1.28,0.12)
        };
    \begin{scope}
        \clip (O) circle (2.25);
        \fill[
            pattern={Lines[angle=45,distance=1.5pt,line width=0.4pt]},
            pattern color=gray!60
        ]
            (A) -- (B) -- (4.2,-0.55) -- (0.6,4.25) -- cycle;
    \end{scope}
    \draw[gray!60, thick] (O) circle (2.25);

    \draw[gray!45, thin]
        plot[smooth cycle, tension=.85] coordinates {
            (-1.05,-0.55) (-0.35,-1.05) (0.75,-0.75) (1.35,0.05)
            (0.78,0.93) (-0.62,0.83) (-1.28,0.12)
        };
    \node[gray] at (-1.8,1.9) {$B_R^d(0)$};
    \node at (-0.7,0.5) {$\mathcal X$};
    \draw[thick] (A) -- (B);
    \node[below right] at (B) {$\partial H_\xi$};
    \draw[gray!75, line width=0.5pt, dash pattern=on 1pt off 1pt] (O) -- (P)
    	node[pos=.55, above left=1pt, yshift=-3pt, xshift=3pt] {$t$};

    \fill (O) circle (1.2pt);
    \node[below left] at (O) {$0$};
    \draw[-{Latex[length=2.4mm,width=1.2mm]}, thick] (P) -- ++(0.65,0.49);
    \node at (1.18,0.98) {$\theta$};
    \fill (0.92,0.20) circle (2pt);
    \node[above right] at (0.92,0.20) {$x$};

    \fill (-0.72,-0.25) circle (2pt);
    \node[below left] at (-0.6,-0.25) {$x'$};
    \node at (1.45,1.35) {$H_\xi$};
\end{tikzpicture}
\caption{
A schematic of the affine half-space
$H_\xi=\{z:\theta\cdot z>t\}$.
The boundary $\partial H_\xi=\{z:\theta\cdot z=t\}$ is perpendicular to the
normal direction $\theta$, and the hatched region represents
$H_\xi\cap B_R^d(0)$.
The dashed segment shows the signed offset $t$ from the origin to the wall
in the schematic case $t>0$.
}
\label{fig:euclidean-crofton-halfspace}
\end{figure}

\begin{corollary}[Compact half-space representation]\label{thm:euclidean-crofton}\label{cor:crofton-centered} Let $\mathcal{X} \subset B^d_R(0)$ be a compact subset of $\R^d$, and define the probability measure $\lambda_{\mathcal{X}}$ on $\mathcal{W}_{\mathcal{X}} = \Sd^{d-1} \times [-R,R]$ by $d\lambda_\mathcal{X}(\theta,t) = \frac{1}{2R}d\sigma_{d-1}(\theta) dt$. Then for any $x,x' \in \mathcal{X}$,
    \begin{equation}
    \label{eq:euclidean-crofton}
        \|x-x'\|
        =\kappa_{\mathcal{X}}\int_{\mathcal{W}_{\mathcal{X}}}
        \left(\mathbf 1_{H_\xi}(x)-\mathbf 1_{H_\xi}(x')\right)^2
        d\lambda_{\mathcal{X}}(\xi),
    \end{equation} 
    where $\kappa_{\mathcal{X}} = \frac{2R}{a_d}$. Consequently, if $\mu\in\mathcal P(\mathcal X)$ and
$U^\mu:\mathcal X\to
L^2(\mathcal W_{\mathcal X},\lambda_{\mathcal X})$ is given by
\begin{equation}\label{eq:centered-crofton}
U_x^\mu(\xi)
=
\sqrt{\kappa_{\mathcal X}}
\bigl(\one_{H_\xi}(x)-\mu(H_\xi)\bigr),
\end{equation}
then for every $x,x' \in \mathcal{X}$,
\begin{equation}\label{eq:centeredl2embedding}
\|x-x'\|
=
\|U_x^\mu-U_{x'}^\mu\|^2_{L^2(\mathcal{W}_\mathcal{X}, \lambda_\mathcal{X})}.
\end{equation}
Moreover, $\sup_{x\in\mathcal X}\|U_x^\mu\|^2_{L^2(\mathcal{W}_\mathcal{X}, \lambda_\mathcal{X})}\leq
\kappa_{\mathcal X}$ and 
$\int U_x^\mu\,d\mu(x)=0$.
\end{corollary}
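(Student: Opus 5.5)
We derive the compact representation from the global one, Theorem~\ref{theorem:global-crofton}, by discarding the half-spaces that do not separate any two points of $\mathcal X$.

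\emph{Step 1: restrict the offset.} Fix $x,x'\in\mathcal X\subset B_R^d(0)$. For $\xi=(\theta,t)$ with $|t|>R$, we have $|\theta\cdot x|\le\|x\|\le R$ and likewise for $x'$. So if $t>R$ then neither point lies in $H_\xi$, and if $t<-R$ then both do. In either case the integrand $(\one_{H_\xi}(x)-\one_{H_\xi}(x'))^2$ vanishes. The integral in \eqref{eq:global-crofton} therefore reduces to $\Sd^{d-1}\times[-R,R]$, where
\[
d\Lambda_d=a_d^{-1}\,d\sigma_{d-1}\,dt=\frac{2R}{a_d}\,d\lambda_{\mathcal X}=\kappa_{\mathcal X}\,d\lambda_{\mathcal X}.
\]
This gives \eqref{eq:euclidean-crofton}. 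Replacing $H_\xi$ by $H_\xi\cap\mathcal X$ does not change $\one_{H_\xi}(x)$ for $x\in\mathcal X$.

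\emph{Step 2: centering.} The map $\xi\mapsto\mu(H_\xi)$ is bounded by $1$. It is Borel measurable: $\one_{H_\xi}(z)$ is jointly Borel in $(\xi,z)$, so Fubini applies. Hence $U^\mu_x$ is a bounded measurable function on a probability space and lies in $L^2(\mathcal W_{\mathcal X},\lambda_{\mathcal X})$. In the difference $U_x^\mu-U_{x'}^\mu=\sqrt{\kappa_{\mathcal X}}(\one_{H_\xi}(x)-\one_{H_\xi}(x'))$ the centering term cancels, so \eqref{eq:centeredl2embedding} is exactly \eqref{eq:euclidean-crofton}.

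\emph{Step 3: the norm bound.} Both $\one_{H_\xi}(x)$ and $\mu(H_\xi)$ lie in $[0,1]$, so $|U^\mu_x(\xi)|^2\le\kappa_{\mathcal X}$ pointwise. Integrating against the probability measure $\lambda_{\mathcal X}$ gives $\|U_x^\mu\|^2_{L^2}\le\kappa_{\mathcal X}$ uniformly in $x\in\mathcal X$.

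\emph{Step 4: the mean-zero identity.} This is the main technical point, since it requires Bochner integrability of $x\mapsto U^\mu_x$.

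\begin{itemize}
\item \emph{Measurability.} By \eqref{eq:centeredl2embedding}, $\|U^\mu_x-U^\mu_{x'}\|_{L^2}=\|x-x'\|^{1/2}$, so the map is Hölder-$1/2$ continuous. It is therefore Borel measurable into the separable Hilbert space $L^2(\mathcal W_{\mathcal X},\lambda_{\mathcal X})$.
\item \emph{Integrability.} The map is bounded by $\sqrt{\kappa_{\mathcal X}}$ by Step 3, hence Bochner integrable with respect to the probability measure $\mu$.
\item \emph{Vanishing of the integral.} Set $I=\int U^\mu_x\,d\mu(x)$. For any $h\in L^2(\lambda_{\mathcal X})$, Lemma~\ref{lem:Bochner} followed by Fubini (justified because the integrand is bounded and $h\in L^1$) gives
\[
\langle I,h\rangle=\int\!\!\int U^\mu_x(\xi)h(\xi)\,d\lambda_{\mathcal X}(\xi)\,d\mu(x)=\sqrt{\kappa_{\mathcal X}}\int h(\xi)\Bigl(\int(\one_{H_\xi}(x)-\mu(H_\xi))\,d\mu(x)\Bigr)\,d\lambda_{\mathcal X}(\xi)=0.
\]
\end{itemize}

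Since this holds for every $h$, we conclude $I=0$.

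Alternatively, Step 4 follows from Corollary~\ref{cor:global-crofton-centered} by restriction, but the direct argument above is self-contained.
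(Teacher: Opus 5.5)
Your proof is correct and follows essentially the route the paper intends. The paper omits the proof, saying it follows from the proofs of Theorem~\ref{theorem:global-crofton} and Corollary~\ref{cor:global-crofton-centered}, and your ingredients are exactly those: restricting the offset to $[-R,R]$ (where $d\Lambda_d=\kappa_{\mathcal X}\,d\lambda_{\mathcal X}$), cancelling the centering term in differences, and the Bochner/Fubini argument for $\int U_x^\mu\,d\mu=0$. Your pointwise bound $|U_x^\mu(\xi)|^2\le\kappa_{\mathcal X}$ is slightly more direct than the Jensen bound $\|U_x^\mu\|^2\le\int\|x-z\|\,d\mu(z)\le 2R\le\kappa_{\mathcal X}$ that the global proof would give, but either works.
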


The proof is omitted as the claims readily follow from the proofs of Theorem~\ref{theorem:global-crofton} and Corollary~\ref{cor:global-crofton-centered}. The underlying measure $\lambda_\mathcal{X}$ is normalized, as the constants can be handled nicely in the subsequent analysis. Exploiting the finiteness of the measure $\lambda_\mathcal{X}$, we obtain the following identity that is useful when expanding out \eqref{eq:centeredl2embedding}.
\begin{lemma} \label{lem:crofton-overlap}
    For any $x,x'\in \mathcal{X}$,
    \begin{equation} \label{eq:crofton-overlap}
        \int_{\mathcal{W}_{\mathcal{X}}}
        \mathbf 1_{H_\xi}(x)\mathbf 1_{H_\xi}(x')
        d\lambda_{\mathcal{X}}(\xi)
        =\frac12-\frac{\|x-x'\|}{2\kappa_{\mathcal X}}.
    \end{equation}
\end{lemma}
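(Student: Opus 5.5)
Expanding the square in \eqref{eq:euclidean-crofton} and using $\mathbf 1_{H_\xi}^2=\mathbf 1_{H_\xi}$ gives
\[
\frac{\|x-x'\|}{\kappa_{\mathcal X}}
=\int_{\mathcal W_{\mathcal X}}\mathbf 1_{H_\xi}(x)\,d\lambda_{\mathcal X}(\xi)
+\int_{\mathcal W_{\mathcal X}}\mathbf 1_{H_\xi}(x')\,d\lambda_{\mathcal X}(\xi)
-2\int_{\mathcal W_{\mathcal X}}\mathbf 1_{H_\xi}(x)\mathbf 1_{H_\xi}(x')\,d\lambda_{\mathcal X}(\xi).
\]
All three integrals are finite because $\lambda_{\mathcal X}$ is a probability measure. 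So it suffices to prove that for every $x\in\mathcal X$,
\begin{equation*}
\int_{\mathcal W_{\mathcal X}}\mathbf 1_{H_\xi}(x)\,d\lambda_{\mathcal X}(\xi)=\tfrac12 .
\end{equation*}
Substituting this into the display and solving for the cross term yields \eqref{eq:crofton-overlap}.

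To prove the one-point identity, use $d\lambda_{\mathcal X}=\frac{1}{2R}\,d\sigma_{d-1}(\theta)\,dt$ on $\Sd^{d-1}\times[-R,R]$ and integrate over $t$ first. Fix $\theta$. Since $\|x\|\le R$, we have $\theta\cdot x\in[-R,R]$, so the set of $t\in[-R,R]$ with $\theta\cdot x>t$ is an interval of length $\theta\cdot x+R$. The integral therefore equals
\[
\frac{1}{2R}\int_{\Sd^{d-1}}(\theta\cdot x+R)\,d\sigma_{d-1}(\theta)=\frac12+\frac{1}{2R}\int_{\Sd^{d-1}}\theta\cdot x\,d\sigma_{d-1}(\theta).
\]
The last integral vanishes because $\sigma_{d-1}$ is invariant under $\theta\mapsto-\theta$.

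The only point needing care is that $H_\xi$ now denotes the restriction $H_\xi\cap\mathcal X$. Since $x,x'\in\mathcal X$, the indicators $\mathbf 1_{H_\xi}(x)$ and $\mathbf 1_{H_\xi}(x')$ are unchanged by this restriction, so the computation above is valid. There is no real obstacle; the argument uses only Corollary~\ref{cor:crofton-centered} and the symmetry of $\sigma_{d-1}$.
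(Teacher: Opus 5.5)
Your proof is correct, but it is organized differently from the paper's.

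The paper computes the overlap directly. For fixed $\theta$ it uses $\frac{1}{2R}\int_{-R}^{R}\mathbf 1_{\{\theta\cdot x>t\}}\mathbf 1_{\{\theta\cdot x'>t\}}\,dt=\frac12+\frac{\min(\theta\cdot x,\theta\cdot x')}{2R}$. It then applies $\min(a,b)=\frac{a+b-|a-b|}{2}$ and kills the $a+b$ part with $\int\theta\,d\sigma_{d-1}(\theta)=0$. Finally it redoes the rotational-invariance step to get $\int|\theta\cdot(x-x')|\,d\sigma_{d-1}(\theta)=a_d\|x-x'\|$.

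You instead polarize the already-stated identity \eqref{eq:euclidean-crofton}. That leaves only the one-point identity $\int\mathbf 1_{H_\xi}(x)\,d\lambda_{\mathcal X}(\xi)=\frac12$, which the paper itself proves later as \eqref{eq:1/2eq} in the proof of Lemma~\ref{lem:linear-distance}.

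At bottom the two arguments are the same identity at different levels. The $\min$ formula is your polarization $(\mathbf 1_a-\mathbf 1_b)^2=\mathbf 1_a+\mathbf 1_b-2\mathbf 1_a\mathbf 1_b$, integrated in $t$ for a fixed $\theta$.

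Your route avoids repeating the rotational-invariance computation. It also makes transparent that the lemma is just the polarization of the Crofton formula. The paper's version is self-contained and does not lean on Corollary~\ref{cor:crofton-centered}, whose proof the paper omits.
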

See Appendix~\ref{pf:crofton-overlap} for the proof. Heuristically, \eqref{eq:crofton-overlap} relates the probability that a random half-space contains both points $x,x' \in \mathcal{X}$ to their Euclidean distance. Finally, we observe several consequences of \eqref{eq:crofton-overlap} that will be useful for our analysis in the compact setting. The proofs of Lemmas~\ref{lem:linear-distance} and~\ref{lem:linear-distance-y} are given in Appendix~\ref{pf:linear-distance} and Appendix~\ref{pf:linear-distance-y}, respectively.

\begin{lemma}\label{lem:linear-distance}
Let $\mu\in\cP(\mathcal X)$. Then for every $x\in \mathcal{X}$,
\begin{equation}\label{eq:linear U}
\|U_x^\mu\|_{L^2(\mathcal{W}_\mathcal{X},\lambda_\mathcal{X})}^2=b_\mu+\int \|z-x\|d\mu(z),
\end{equation}
where $b_\mu =
\kappa_{\mathcal X}
\left(
\int\mu(H_\xi)^2\,d\lambda_{\mathcal X}(\xi)-\frac12
\right)$ is a constant independent of $x$.
Consequently, $x\mapsto \|U_x^\mu\|^{2}_{L^2(\mathcal{W}_\mathcal{X},\lambda_{\mathcal{X}})}$ is $1$-Lipschitz.

\end{lemma}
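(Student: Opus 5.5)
The identity \eqref{eq:linear U} follows by expanding the square in the definition \eqref{eq:centered-crofton} and evaluating each term with Lemma~\ref{lem:crofton-overlap} and Fubini. Writing $\|\cdot\|$ for the norm of $L^2(\mathcal W_{\mathcal X},\lambda_{\mathcal X})$ and using $\one_{H_\xi}(x)^2=\one_{H_\xi}(x)$, we get
\[
\|U_x^\mu\|^2
=\kappa_{\mathcal X}\int\Bigl(\one_{H_\xi}(x)-2\one_{H_\xi}(x)\mu(H_\xi)+\mu(H_\xi)^2\Bigr)\,d\lambda_{\mathcal X}(\xi).
\]
For the first term we take $x'=x$ in \eqref{eq:crofton-overlap}, which gives $\int\one_{H_\xi}(x)\,d\lambda_{\mathcal X}(\xi)=\tfrac12$. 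For the cross term we write $\mu(H_\xi)=\int\one_{H_\xi}(z)\,d\mu(z)$. Since all integrands are bounded and both measures are finite, Fubini's theorem lets us swap the order of integration:
\[
\int\one_{H_\xi}(x)\mu(H_\xi)\,d\lambda_{\mathcal X}(\xi)
=\int\!\!\int\one_{H_\xi}(x)\one_{H_\xi}(z)\,d\lambda_{\mathcal X}(\xi)\,d\mu(z)
=\frac12-\frac{1}{2\kappa_{\mathcal X}}\int\|z-x\|\,d\mu(z),
\]
where the last step applies Lemma~\ref{lem:crofton-overlap} with $x'=z\in\mathcal X$. This uses $\mu\in\cP(\mathcal X)$, so $\mu$-a.e.\ $z$ lies in $\mathcal X$.

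Substituting these two evaluations, the terms combine as
\[
\|U_x^\mu\|^2=\kappa_{\mathcal X}\Bigl(\tfrac12-1+\int\mu(H_\xi)^2\,d\lambda_{\mathcal X}(\xi)\Bigr)+\int\|z-x\|\,d\mu(z),
\]
which is exactly $b_\mu+\int\|z-x\|\,d\mu(z)$ with the stated constant $b_\mu$. The constant $b_\mu$ does not depend on $x$.

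For the Lipschitz claim, note that $b_\mu$ cancels in differences. For $x,x'\in\mathcal X$ the triangle inequality gives $\bigl|\|z-x\|-\|z-x'\|\bigr|\le\|x-x'\|$ for every $z$. Integrating against the probability measure $\mu$ then gives
\[
\bigl|\|U_x^\mu\|^2-\|U_{x'}^\mu\|^2\bigr|\le\|x-x'\|.
\]

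The only point needing care is the Fubini interchange in the cross term, and it is immediate because the integrands are bounded and the measures are finite. I therefore do not expect any real obstacle.
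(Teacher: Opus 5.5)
Your proof is correct and follows the paper's argument essentially verbatim: expand the square, use Fubini together with Lemma~\ref{lem:crofton-overlap} for the cross term, and apply the reverse triangle inequality for the Lipschitz bound. The only cosmetic difference is the first term. You get $\int \one_{H_\xi}(x)\,d\lambda_{\mathcal X}(\xi)=\tfrac12$ by setting $x'=x$ in \eqref{eq:crofton-overlap}. The paper instead computes it directly from $\int_{-R}^{R}\one_{\{\theta\cdot x>t\}}\,dt=R+\theta\cdot x$ and the symmetry of $\sigma_{d-1}$.
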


\begin{lemma}\label{lem:linear-distance-y}
Let $\mu\in\cP(\mathcal X)$. For $u,x\in\mathcal X$, define
\[
\alpha_u^\mu(x)
:=
\frac{1}{\sqrt{\kappa_{\mathcal X}}}
\int\1_{H_\xi}(u)U_x^\mu(\xi)\,d\lambda_{\mathcal X}(\xi).
\]
Then $|\alpha_u^\mu(x)|\leq\frac12$ for all $u,x\in\mathcal X$, and
\begin{equation}\label{eq:linear-alpha}
\alpha_u^\mu(x)
=
A_\mu(u)-\frac{\|u-x\|}{2\kappa_{\mathcal X}},
\end{equation}
where
$A_\mu(u)
=
\frac12-\int\1_{H_\xi}(u)\mu(H_\xi)\,d\lambda_{\mathcal X}(\xi).
$
Consequently, $x\mapsto \alpha_u^\mu(x)$ is
$1/(2\kappa_{\mathcal X})$-Lipschitz.
\end{lemma}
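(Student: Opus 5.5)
The plan is to expand the definition of $\alpha_u^\mu(x)$ by linearity and then apply Lemma~\ref{lem:crofton-overlap} to the resulting overlap integral. By \eqref{eq:centered-crofton}, $U_x^\mu(\xi)/\sqrt{\kappa_{\mathcal X}}=\one_{H_\xi}(x)-\mu(H_\xi)$. Hence
\[
\alpha_u^\mu(x)=\int\one_{H_\xi}(u)\one_{H_\xi}(x)\,d\lambda_{\mathcal X}(\xi)-\int\one_{H_\xi}(u)\mu(H_\xi)\,d\lambda_{\mathcal X}(\xi).
\]
Both integrals are finite because $\lambda_{\mathcal X}$ is a probability measure and the integrands take values in $[0,1]$. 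Measurability of $\xi\mapsto\mu(H_\xi)$ follows from Fubini applied to the jointly measurable set $\{(\xi,z):\theta\cdot z>t\}$.

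Next, apply \eqref{eq:crofton-overlap} with the points $u,x\in\mathcal X$. The first integral becomes $\tfrac12-\|u-x\|/(2\kappa_{\mathcal X})$. Substituting this gives exactly
\[
\alpha_u^\mu(x)=A_\mu(u)-\frac{\|u-x\|}{2\kappa_{\mathcal X}},
\]
with $A_\mu(u)$ as defined in the statement. The Lipschitz claim is then immediate: $A_\mu(u)$ does not depend on $x$, and $x\mapsto\|u-x\|$ is $1$-Lipschitz, so $x\mapsto\alpha_u^\mu(x)$ is $1/(2\kappa_{\mathcal X})$-Lipschitz.

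The bound $|\alpha_u^\mu(x)|\le\tfrac12$ is the only step needing a small extra idea. A crude estimate gives only $|\one_{H_\xi}(x)-\mu(H_\xi)|\le1$, hence $|\alpha_u^\mu(x)|\le1$, so I would use centering instead. Fix $u$ and $\xi$ and write $p=\mu(H_\xi)\in[0,1]$. The integrand is $\one_{H_\xi}(u)(\one_{H_\xi}(x)-p)$, which lies in $[-p,1-p]$ if $u\in H_\xi$ and equals $0$ otherwise. Integrating against $\lambda_{\mathcal X}$ gives
\[
-\int\one_{H_\xi}(u)\,p\,d\lambda_{\mathcal X}\;\le\;\alpha_u^\mu(x)\;\le\;\int\one_{H_\xi}(u)(1-p)\,d\lambda_{\mathcal X}.
\]

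To bound these two sides, I would average over $x$ and use symmetry. By Fubini, $\int\alpha_u^\mu(x)\,d\mu(x)=0$, which is the centering identity from Corollary~\ref{cor:crofton-centered}. Hence $A_\mu(u)=\frac1{2\kappa_{\mathcal X}}\int\|u-z\|\,d\mu(z)$, and therefore
\[
\alpha_u^\mu(x)=\frac{1}{2\kappa_{\mathcal X}}\Bigl(\int\|u-z\|\,d\mu(z)-\|u-x\|\Bigr).
\]
The absolute value of the bracket is at most $\diam\mathcal X\le 2R$. Since $\kappa_{\mathcal X}=2R/a_d$, this gives $|\alpha_u^\mu(x)|\le a_d/2\le\tfrac12$, using $a_d=\E|\theta_1|\le1$. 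This last step, which combines centering with the bound $a_d\le1$, is the main point to verify. Everything else is direct substitution of Lemma~\ref{lem:crofton-overlap}.
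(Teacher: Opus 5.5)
Your proof is correct. The identity \eqref{eq:linear-alpha} and the Lipschitz claim are obtained exactly as in the paper; the difference lies in how you get $|\alpha_u^\mu(x)|\le\frac12$.

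The paper argues pointwise. Since $|\one_{H_\xi}(x)-\mu(H_\xi)|\le1$, one has $|\alpha_u^\mu(x)|\le\int\one_{H_\xi}(u)\,d\lambda_{\mathcal X}(\xi)=\frac12$, where the last equality is \eqref{eq:1/2eq} (equivalently, Lemma~\ref{lem:crofton-overlap} with $x=x'=u$). Your remark that the crude estimate only yields $1$ overlooks that the factor $\one_{H_\xi}(u)$ restricts the integral to a set of $\lambda_{\mathcal X}$-measure $\frac12$. In fact your own two-sided display already finishes the job, since both of its sides are at most $\int\one_{H_\xi}(u)\,d\lambda_{\mathcal X}(\xi)=\frac12$ in absolute value.

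Your detour is also valid, but it needs more inputs. You integrate the identity against $\mu$, use the centering $\int U_x^\mu\,d\mu(x)=0$ to identify $A_\mu(u)=\frac{1}{2\kappa_{\mathcal X}}\int\|u-z\|\,d\mu(z)$, and then invoke $\diam\mathcal X\le2R$ and $a_d\le1$. What it buys is the explicit formula $\alpha_u^\mu(x)=\frac{1}{2\kappa_{\mathcal X}}\bigl(\int\|u-z\|\,d\mu(z)-\|u-x\|\bigr)$ and the slightly sharper bound $a_d\diam\mathcal X/(4R)$. The paper's route is shorter and purely pointwise.
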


%% file: sections/dual.tex
\section{Duality} \label{sec:duality}

In this section, we leverage the squared $L^2$ representations of the
Euclidean distance obtained in Section~\ref{sec:crofton-halfspace} to
establish duality results for $D_2^2$. Using the global version from Corollary~\ref{cor:global-crofton-centered}, we begin
with a general duality result for marginals that are not necessarily
compactly supported. Note that in the following, we do not assume
that the marginals are centered (see Remark \ref{rem:centering} for details).

\begin{theorem}[Duality, global version]
\label{thm:global-duality}
Let $(\mu, \nu)\in\mathcal P_2(\R^{d_x})\times \mathcal P_2(\R^{d_y})$.\footnote{We impose the finite-second-moment assumption to ensure that $D_2(\mu,\nu)$ is finite without the compactness assumption.} Let
$\widetilde U^\mu:\R^{d_x}\to L^2(\mathcal W_{d_x},\Lambda_{d_x})$ and
$\widetilde V^\nu:\R^{d_y}\to L^2(\mathcal W_{d_y},\Lambda_{d_y})$ be the
global centered embeddings from \eqref{eq:global-centered-crofton} for $\R^{d_x}$ and
$\R^{d_y}$, respectively. Set
$(\widetilde{\mathcal W},\widetilde\Lambda)
:=
(\mathcal W_{d_x}\times\mathcal W_{d_y},
\Lambda_{d_x}\otimes\Lambda_{d_y})$, and define
$m_\mu:=\int\|\widetilde U_x^\mu\|_{L^2(\mathcal W_{d_x},\Lambda_{d_x})}^2
\,d\mu(x)$ and
$m_\nu:=\int\|\widetilde V_y^\nu\|_{L^2(\mathcal W_{d_y},\Lambda_{d_y})}^2
\,d\nu(y)$.
Then
\begin{equation}\label{eq:global-duality}
D_2^2(\mu,\nu)
=
E_{\mu,\nu}
+
\inf_{a\in \mathcal B_{\mu, \nu}}
\left\{
\frac18\|a\|_{L^2(\widetilde{\mathcal W},\widetilde\Lambda)}^2
+
T_{\widetilde c_{a,\mu,\nu}}(\mu,\nu)
\right\},
\end{equation}
where 
\begin{align*}
E_{\mu,\nu}:=
\iint\|x-x'\|^2\,d\mu(x)\,d\mu(x')
+
&\iint\|y-y'\|^2\,d\nu(y)\,d\nu(y')\\
&\quad-
\left(
\iint\|x-x'\|\,d\mu(x)\,d\mu(x')
\right)
\left(
\iint\|y-y'\|\,d\nu(y)\,d\nu(y')
\right),
\end{align*}
\begin{equation*}
    \mathcal B_{\mu,\nu}
:=
\left\{
a\in L^2(\widetilde{\mathcal W},\widetilde\Lambda):
\|a\|_{L^2(\widetilde{\mathcal W},\widetilde\Lambda)}
\leq 8\sqrt{m_\mu m_\nu}
\right\},
\end{equation*}
and for each $a\in \mathcal{B}_{\mu,\nu}$, the associated cost
$\widetilde c_{a,\mu,\nu}:\R^{d_x}\times\R^{d_y}\to\R$ is
\begin{equation}\label{eq:global-dual-cost}
\widetilde c_{a,\mu,\nu}(x,y)
=
-4\|\widetilde U_x^\mu\|^2_{L^2(\mathcal W_{d_x},\Lambda_{d_x})}
\|\widetilde V_y^\nu\|^2_{L^2(\mathcal W_{d_y},\Lambda_{d_y})}
-
2\int
a(\xi,\zeta)\widetilde U_x^\mu(\xi)\widetilde V_y^\nu(\zeta)
\,d\widetilde\Lambda(\xi,\zeta).
\end{equation}
\end{theorem}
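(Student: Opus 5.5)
The plan is to expand the Gromov--Wasserstein integrand using the centered embedding of Corollary~\ref{cor:global-crofton-centered}. The quadratic dependence on $\gamma$ then reduces to a single squared Hilbert norm, which we linearize with a variational identity. Write $U_x:=\widetilde U_x^\mu$, $V_y:=\widetilde V_y^\nu$, and $\|\cdot\|$ for the relevant $L^2$ norms.

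\textbf{Expanding the integrand.} For $\gamma\in\Pi(\mu,\nu)$ we have
\[
\iint |d_X-d_Y|^2\,d\gamma\,d\gamma=\iint\|x-x'\|^2\,d\mu\,d\mu+\iint\|y-y'\|^2\,d\nu\,d\nu-2\iint\|x-x'\|\,\|y-y'\|\,d\gamma\,d\gamma .
\]
The first two terms are finite by the second-moment assumption. In the cross term we substitute
\[
\|x-x'\|=\|U_x\|^2+\|U_{x'}\|^2-2\langle U_x,U_{x'}\rangle,
\]
and similarly for $y$, then expand the product into nine terms.

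\textbf{Evaluating the nine terms.} We treat them in four groups.
\begin{itemize}
\item The terms $\|U_x\|^2\|V_y\|^2$ and $\|U_{x'}\|^2\|V_{y'}\|^2$ give $2\int\|U_x\|^2\|V_y\|^2\,d\gamma$.
\item The mixed terms $\|U_x\|^2\|V_{y'}\|^2$ give $2m_\mu m_\nu$.
\item A term such as $\|U_x\|^2\langle V_y,V_{y'}\rangle$ integrates, by Lemma~\ref{lem:Bochner}, to $\bigl\langle\int\|U_x\|^2V_y\,d\gamma,\int V_{y'}\,d\nu\bigr\rangle=0$. This uses the centering identity from Corollary~\ref{cor:global-crofton-centered}.
\item The last term is $4\langle U_x,U_{x'}\rangle\langle V_y,V_{y'}\rangle$. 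Viewing $U_x\otimes V_y$ as an element of $L^2(\widetilde{\mathcal W},\widetilde\Lambda)$ with $\|U_x\otimes V_y\|=\|U_x\|\|V_y\|$, it integrates to $4\|M_\gamma\|^2$, where $M_\gamma:=\int U_x\otimes V_y\,d\gamma$ is a Bochner integral.
\end{itemize}
The same computation with $\mu\otimes\mu$ in place of $\gamma\otimes\gamma$ gives $\iint\|x-x'\|\,d\mu\,d\mu=2m_\mu$. Hence $4m_\mu m_\nu$ equals the product of the two mean distances. Collecting terms,
\[
\iint |d_X-d_Y|^2\,d\gamma\,d\gamma
=E_{\mu,\nu}-4\int\|U_x\|^2\|V_y\|^2\,d\gamma-8\|M_\gamma\|^2 .
\]

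\textbf{Linearization.} For any $h$ in a Hilbert space we use
\[
-8\|h\|^2=\min_a\Bigl\{\tfrac18\|a\|^2-2\langle a,h\rangle\Bigr\},
\]
with the minimum attained at $a=8h$. Lemma~\ref{lem:Bochner} and Fubini give
\[
\langle a,M_\gamma\rangle=\int\!\!\int a(\xi,\zeta)\,U_x(\xi)V_y(\zeta)\,d\widetilde\Lambda\,d\gamma ,
\]
so the objective becomes $E_{\mu,\nu}+\frac18\|a\|^2+\int\widetilde c_{a,\mu,\nu}\,d\gamma$. By the Bochner triangle inequality and Cauchy--Schwarz,
\[
\|M_\gamma\|\le\int\|U_x\|\,\|V_y\|\,d\gamma\le\sqrt{m_\mu m_\nu}.
\]
Therefore the optimizer $a=8M_\gamma$ lies in $\mathcal B_{\mu,\nu}$ for every $\gamma$, and we may restrict the minimization to $\mathcal B_{\mu,\nu}$ uniformly in $\gamma$. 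Interchanging $\inf_\gamma$ and $\inf_a$ is free, and taking $\inf_\gamma$ inside produces $T_{\widetilde c_{a,\mu,\nu}}(\mu,\nu)$.

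\textbf{Main obstacle: integrability.} The step I expect to require the most care is justifying these manipulations on the infinite measure space. First, $x\mapsto U_x\otimes V_y$ must be Borel measurable and Bochner integrable. I would obtain continuity from Corollary~\ref{cor:global-crofton-centered}. I would then get the bound $\|U_x\|^2\lesssim 1+\|x\|$ from the global analogue of Lemma~\ref{lem:linear-distance}: $\|U_x\|^2$ is $\int\|z-x\|\,d\mu(z)$ plus a constant, which is finite by the first moment.

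Second, $T_{\widetilde c}$ must be well defined. Since $|\widetilde c_{a,\mu,\nu}|\le 4\|U_x\|^2\|V_y\|^2+2\|a\|\,\|U_x\|\,\|V_y\|$, it suffices to bound $\|U_x\|^2\|V_y\|^2\le\tfrac12(\|U_x\|^4+\|V_y\|^4)$. This gives lower bounds $\ell_{\mathcal X}(x)+\ell_{\mathcal Y}(y)$ with $\ell_{\mathcal X}\lesssim -(1+\|x\|)^2$, which is integrable by the second-moment assumption.

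Third, every Fubini swap (over $\xi,\zeta$ against $\gamma\otimes\gamma$) must be legitimate. Each is justified by absolute integrability of $\|U_x\|\,\|U_{x'}\|\,\|V_y\|\,\|V_{y'}\|$, which follows from the same moment bounds.
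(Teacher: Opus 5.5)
Your proposal is correct and follows essentially the same route as the paper's proof. You expand the cross term with the centered embedding, eliminate the single-inner-product terms via $\int \widetilde U_x^\mu\,d\mu=0$, identify the quadratic term as $-8\|\widetilde M_\gamma\|^2$, complete the square with minimizer $8\widetilde M_\gamma\in\mathcal B_{\mu,\nu}$ (by Cauchy--Schwarz), and swap the infima. The only differences are cosmetic: you use an exact global analogue of Lemma~\ref{lem:linear-distance} where the paper uses the Jensen bound $\|\widetilde U_x^\mu\|^2\le\int\|x-z\|\,d\mu(z)$, and you justify the $\|\widetilde M_\gamma\|^2$ identity by Fubini rather than by Lemma~\ref{lem:Bochner}.
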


In \eqref{eq:global-duality},
the coupling-independent term is isolated in $E_{\mu,\nu}$, and the
coupling-dependent part is expressed as a variational problem whose
objective is the OT problem $T_{\widetilde c_{a,\mu,\nu}}$ parametrized by $a\in \mathcal B_{\mu,\nu}$ with an additional
$L^2$ regularization term. The dependence on the coupling now resides
in $T_{\widetilde c_{a,\mu,\nu}}$ and is no longer quadratic. The price of this
linearization via the squared $L^2$ representation~\eqref{eq:global-centeredl2embedding}
is that the parameter set $\mathcal{B}_{\mu,\nu}$ lies in the
infinite-dimensional space $L^2(\widetilde{\mathcal{W}}, \widetilde\Lambda)$, where
boundedness alone does not ensure finite metric entropy. This global formulation is
therefore not sufficient for our sample complexity analysis,
as the metric entropy of the parameter class contributes to the final
empirical process estimates (cf. Step~3.3.3 in the proof of Theorem~\ref{thm:main-informal} in Section~\ref{sec:pf-upper}). 

In contrast, if the marginals $\mu$ and $\nu$ are compactly supported, this obstruction can be avoided. Since the proof of Theorem~\ref{thm:global-duality}, an overview of which is given below, does not rely on the underlying measure spaces for the $L^2$ spaces in its algebraic steps, the compact version of the $L^2$ representation in Corollary~\ref{cor:crofton-centered} may be used instead. In this case,  the parameter class $\mathcal{B}_{\mu,\nu}$ can be replaced by another class which depends only on the supports rather than on the marginals themselves, as the following shows.

\begin{corollary}[Duality, compact version]\label{cor:compact-duality}
Let $(\mu, \nu) \in \mathcal{P}(\mathcal{X})\times \mathcal{P}(\mathcal{Y})$,
where $\mathcal{X} \subset \mathbb{R}^{d_x}$ and
$\mathcal{Y} \subset \mathbb{R}^{d_y}$ are compact. Let
$U^\mu \colon \mathcal{X} \to L^2(\mathcal W_\mathcal{X}, \lambda_\mathcal{X})$ and
$V^\nu \colon \mathcal{Y} \to L^2(\mathcal W_\mathcal{Y}, \lambda_\mathcal{Y})$ be the
centered embeddings from \eqref{eq:centered-crofton} for $\mathcal{X}$ and
$\mathcal{Y}$, respectively, and set
$(\mathcal W, \lambda) := (\mathcal W_\mathcal{X} \times \mathcal W_\mathcal{Y},
\lambda_\mathcal{X} \otimes \lambda_\mathcal{Y})$. For each
$u \in \mathcal{X}$ and $v \in \mathcal{Y}$, define
$h_{u,v} \in L^2(\mathcal W, \lambda)$ by
$h_{u,v}(\xi, \zeta) = \mathbf{1}_{H_\xi}(u)\mathbf{1}_{G_\zeta}(v)$, where
$H_\xi$ and $G_\zeta$ are the affine half-spaces in $\mathbb{R}^{d_x}$ and
$\mathbb{R}^{d_y}$ parametrized by $\xi$ and $\zeta$, respectively (restricted to $\mathcal X$ and $\mathcal{Y}$). Then
\begin{equation}\label{eq:21dual}
D_2^2(\mu, \nu)
= E_{\mu,\nu}
+ \inf_{a \in \mathcal{A}}
\left\{ \frac{1}{8}\|a\|_{L^2(\mathcal W,\lambda)}^2 + T_{c_{a,\mu,\nu}}(\mu, \nu) \right\},
\end{equation}
where $E_{\mu,\nu}$ is as in Theorem~\ref{thm:global-duality},
\begin{equation*}
\mathcal{A} := \left\{ 8\sqrt{\kappa_\mathcal{X}\kappa_\mathcal{Y}}
\int h_{u,v} \, d\eta(u,v) : \eta \text{ signed and }
\|\eta\|_{\TV} \leq 2 \right\} \subset L^2(\mathcal W, \lambda),
\end{equation*}
and, for each $a \in \mathcal{A}$, the associated cost
$c_{a,\mu,\nu} \colon \mathcal{X} \times \mathcal{Y} \to \mathbb{R}$ is
\begin{equation}\label{eq:cost-original}
c_{a,\mu,\nu}(x, y)
= -4 \|U^\mu_x\|_{L^2(\mathcal W_\mathcal{X},\lambda_\mathcal{X})}^2
\|V^\nu_y\|_{L^2(\mathcal W_\mathcal{Y},\lambda_\mathcal{Y})}^2
- 2 \int a(\xi, \zeta) U^\mu_x(\xi) V^\nu_y(\zeta) \, d\lambda(\xi, \zeta).
\end{equation}
\end{corollary}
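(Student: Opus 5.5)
We will reproduce the algebra behind Theorem~\ref{thm:global-duality}, this time using the compact embeddings of Corollary~\ref{cor:crofton-centered}. The only new ingredient is the identification of the optimizer. Write $\|\cdot\|$ for the relevant $L^2$ norms. Fix $\gamma\in\Pi(\mu,\nu)$. By \eqref{eq:centeredl2embedding},
\[
|\|x-x'\|-\|y-y'\||^2=\|x-x'\|^2+\|y-y'\|^2-2\|U^\mu_x-U^\mu_{x'}\|^2\|V^\nu_y-V^\nu_{y'}\|^2 .
\]
Integrating the first two terms against $\gamma\otimes\gamma$ gives the first two summands of $E_{\mu,\nu}$. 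For the cross term we expand
\[
\|U_x-U_{x'}\|^2=\|U_x\|^2+\|U_{x'}\|^2-2\langle U_x,U_{x'}\rangle ,
\]
and similarly for $V$. Integrating against $\gamma\otimes\gamma$ and using $\int U^\mu_x\,d\mu=0$, $\int V^\nu_y\,d\nu=0$ together with Lemma~\ref{lem:Bochner}, every term that is linear in a single $U_x$ or $V_y$ vanishes. What survives is
\[
2\,m_\mu m_\nu+2\!\int\|U_x\|^2\|V_y\|^2d\gamma+4\,\|M_\gamma\|^2_{L^2(\mathcal W,\lambda)},\qquad M_\gamma:=\int U^\mu_x\otimes V^\nu_y\,d\gamma(x,y),
\]
a Bochner integral in $L^2(\mathcal W,\lambda)$. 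Since $\iint\|x-x'\|\,d\mu\,d\mu=2m_\mu$ (by \eqref{eq:centeredl2embedding} and centering), the term $-4m_\mu m_\nu$ is exactly the product term of $E_{\mu,\nu}$. This gives
\[
\iint|\cdots|^2\,d\gamma\,d\gamma=E_{\mu,\nu}-4\int\|U_x\|^2\|V_y\|^2\,d\gamma-8\|M_\gamma\|^2 .
\]

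Next we linearize the quadratic term. Completing the square,
\[
-8\|M\|^2=\min_{a}\Bigl\{\tfrac18\|a\|^2-2\langle a,M\rangle\Bigr\},
\]
with minimizer $a=8M$, and $\langle a,M_\gamma\rangle=\int\langle a,U_x\otimes V_y\rangle\,d\gamma$ by Lemma~\ref{lem:Bochner}. Hence for each $\gamma$ the objective equals $E_{\mu,\nu}+\min_a\{\tfrac18\|a\|^2+\int c_{a,\mu,\nu}\,d\gamma\}$. Exchanging the infima over $\gamma$ and $a$, which is legitimate because both are infima, yields \eqref{eq:21dual} with $a$ ranging over any set containing $\{8M_\gamma:\gamma\in\Pi(\mu,\nu)\}$, or at least containing a minimizer. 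The infimum over $a\in\mathcal A$ is then never below $D_2^2$, since for every $a$ and $\gamma$ the expression dominates the $\gamma$-objective. It is attained up to $\varepsilon$ by $a=8M_\gamma$ for a near-optimal $\gamma$, once we know $8M_\gamma\in\mathcal A$ for all couplings. Finiteness of $T_c$ and measurability of $c_{a,\mu,\nu}$ are clear, because $\sup\|U_x\|^2\le\kappa_{\mathcal X}$ and the cost is bounded.

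The main step is to show $8M_\gamma\in\mathcal A$. Write
\[
U^\mu_x\otimes V^\nu_y=\sqrt{\kappa_{\mathcal X}\kappa_{\mathcal Y}}\,\bigl(\mathbf 1_{H_\xi}(x)-\mu(H_\xi)\bigr)\bigl(\mathbf 1_{G_\zeta}(y)-\nu(G_\zeta)\bigr).
\]
Expanding and integrating against $\gamma$, the two middle terms each equal $-\mu(H_\xi)\nu(G_\zeta)$ and the last term equals $+\mu(H_\xi)\nu(G_\zeta)$. Thus
\[
M_\gamma=\sqrt{\kappa_{\mathcal X}\kappa_{\mathcal Y}}\int h_{u,v}\,d(\gamma-\mu\otimes\nu)(u,v),
\]
where the identification of the Bochner integral with the pointwise formula follows from Lemma~\ref{lem:Bochner} and Fubini, since $(u,v)\mapsto h_{u,v}$ is bounded and measurable into $L^2$. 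The signed measure $\eta=\gamma-\mu\otimes\nu$ has $\|\eta\|_{\TV}\le2$, so $8M_\gamma\in\mathcal A$. The delicate points are this Bochner/pointwise identification, which requires Bochner measurability of $(u,v)\mapsto h_{u,v}$ (continuity in $L^2$ fails at boundaries but strong measurability holds by separability), and checking that the restriction of half-spaces to $\mathcal X$, $\mathcal Y$ does not affect the centering identities. Both should follow as in the proof of Corollary~\ref{cor:global-crofton-centered}.
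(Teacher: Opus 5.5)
Your proposal is correct and follows essentially the same route as the paper: you redo the expansion and completing-the-square linearization of Theorem~\ref{thm:global-duality} with the compact embeddings, then show $8M_\gamma\in\mathcal A$ via the pointwise identity $M_\gamma=\sqrt{\kappa_{\mathcal X}\kappa_{\mathcal Y}}\int h_{u,v}\,d(\gamma-\mu\otimes\nu)$ together with $\|\gamma-\mu\otimes\nu\|_{\TV}\le 2$. One small correction: your parenthetical claim that $(u,v)\mapsto h_{u,v}$ fails to be continuous in $L^2$ is false, since the compact Crofton identity gives $\|h_{u,v}-h_{u',v'}\|^2_{L^2(\mathcal W,\lambda)}\le \|u-u'\|/\kappa_{\mathcal X}+\|v-v'\|/\kappa_{\mathcal Y}$, so the map is $1/2$-H\"older (this is how the paper shows $\mathcal A$ is well defined); your separability (Pettis) argument for measurability still works, so the slip is harmless.
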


In the definition of the new parameter class $\cA$, the measure $\eta$ ranges over all finite
signed Borel measures on $\mathcal X\times\mathcal Y$ with total
variation $|\eta|(\mathcal X\times\mathcal Y)\leq2$, and
$\int h_{u,v}\,d\eta(u,v)$ is the Bochner integral.\footnote{Alternatively, in view of
Lemma~\ref{lem:Bochner} and Fubini, it admits the pointwise
representative $
\left(\int h_{u,v}\,d\eta(u,v)\right)(\xi,\zeta)
=
\eta(H_\xi\times G_\zeta)$ for $\lambda$-almost every $(\xi,\zeta)\in\mathcal W$.} 
By the triangle inequality and the bounds
$\|h_{u,v}\|_\infty\leq1$ and $\|\eta\|_{\TV}\leq2$, $\cA$ is
uniformly bounded in both $\|\cdot\|_\infty$ and
$\|\cdot\|_{L^2(\mathcal W,\lambda)}$ by
$16\sqrt{\kappa_{\mathcal X}\kappa_{\mathcal Y}}$. Although $\mathcal{A}$ is a subset of $L^2(\mathcal{W}, \lambda)$, its metric entropy can be controlled. By the
definition of the Bochner integral, $\mathcal{A}$ is contained (up to scaling) in the closed absolutely convex hull
of the collection $\{h_{u,v}\}_{(u,v)\in\mathcal{X}\times\mathcal{Y}}$. The metric entropy of this hull can be estimated quantitatively
(see Appendix~\ref{pf:convex-hull}), which can then be used to obtain relevant empirical process estimates in our
sample-complexity analysis.

The proofs of Theorem~\ref{thm:global-duality}
and Corollary~\ref{cor:compact-duality} are given in
Appendix~\ref{appendix:proofs-duality}. Both follow the same expansion, and we
present here an overview. Directly expanding $D_2^2$ isolates the
coupling-dependent component
\[
\inf_{\gamma\in\Pi(\mu,\nu)}
-2\iint
\|x-x'\|\|y-y'\|
\,d\gamma(x,y)\,d\gamma(x',y').
\]
The global centered embedding \eqref{eq:global-centeredl2embedding} from
Corollary~\ref{cor:global-crofton-centered} and its compact counterpart
\eqref{eq:centeredl2embedding} allow us to replace the Euclidean
distances by squared $L^2$
distances. Expanding them further using the corresponding centering identities, the quadratic dependence on $\gamma$ can be
identified as the negative squared norm $-8\|\widetilde M_\gamma\|^2_{L^2(\widetilde{\mathcal W},\widetilde\Lambda)}$ in the global version, where
$\widetilde M_\gamma:=\int \widetilde U_x^\mu\otimes \widetilde V_y^\nu\,d\gamma(x,y)$, and as $-8\|M_\gamma\|^2_{L^2(\mathcal W,\lambda)}$ in the compact version, where
$M_\gamma:=\int U_x^\mu\otimes V_y^\nu\,d\gamma(x,y)$. Each term is linearized by introducing an auxiliary
parameter in the corresponding product $L^2$ space and optimizing over
it, whose optimizer is $8\widetilde M_\gamma$ or $8M_\gamma$, respectively. The result follows upon restricting the optimization to the corresponding constraint $\mathcal B_{\mu, \nu}$ or $\mathcal{A}$, which follows from the Cauchy--Schwarz inequality for the global version, and the identification
$M_\gamma
=
\sqrt{\kappa_{\mathcal X}\kappa_{\mathcal Y}}
\int h_{x,y}\,d(\gamma-\mu\otimes\nu)(x,y)$ for the compact version.

\subsection{Structure of the induced costs}
\label{sec:cost-structure}
In the compactly supported setting, each $a\in\mathcal A$ can be represented by a signed measure $\eta$ on
$\mathcal X\times\mathcal Y$ with $\|\eta\|_{\TV}\leq2$ via
$a=a_\eta:=8\sqrt{\kappa_\mathcal X\kappa_\mathcal Y}\int h_{u,v}\,d\eta(u,v)$.
With this representation, we obtain the following alternative form of the
cost $c_{a,\mu,\nu}$ expressed in terms of $\eta$.
\begin{proposition} \label{prop:cost-alternate}
    Fix $\mu\in\cP(\mathcal X)$ and $\nu\in\cP(\mathcal Y)$. Let $\eta$ be a signed measure on $\mathcal X\times \mathcal Y$ with $\|\eta\|_{\TV}\le 2$, and set $a_{\eta} = 8\sqrt{\kappa_\mathcal{X}\kappa_\mathcal{Y}}\int h_{u,v} d\eta(u,v)$. Then
\begin{equation} \label{eq:cost-alternate}
c_{a_\eta,\mu,\nu}(x,y)=
-4\|U_x^\mu\|^2_{L^2(\mathcal{W}_\mathcal{X}, \lambda_\mathcal{X})}\|V_y^\nu\|^2_{L^2(\mathcal{W}_\mathcal{Y}, \lambda_\mathcal{Y})}
-
16\kappa_\mathcal{X}\kappa_\mathcal{Y}
\int
\alpha_u^\mu(x)\beta_v^\nu(y) d\eta(u,v),
\end{equation}
where $\alpha_u^\mu(x)=\frac{1}{\sqrt{\kappa_{\mathcal{X}}}}\int \1_{H_\xi}(u)U_x^\mu(\xi)d\lambda_\mathcal{X}(\xi)$, $\beta_v^\nu(y)=\frac{1}{\sqrt{\kappa_{\mathcal{Y}}}}\int\1_{G_\zeta}(v)V_y^\nu(\zeta)d\lambda_\mathcal{Y}(\zeta)$, and the integral in \eqref{eq:cost-alternate} is a scalar integral defined for each $(x,y) \in \mathcal{X} \times \mathcal{Y}$.
\end{proposition}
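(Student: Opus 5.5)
The first term of \eqref{eq:cost-alternate} already appears verbatim in \eqref{eq:cost-original}, so the plan is to rewrite only the bilinear term
\[
-2\int a_\eta(\xi,\zeta)\,U_x^\mu(\xi)V_y^\nu(\zeta)\,d\lambda(\xi,\zeta)
=-2\bigl\langle a_\eta,\;U_x^\mu\otimes V_y^\nu\bigr\rangle_{L^2(\mathcal W,\lambda)} ,
\]
where $U_x^\mu\otimes V_y^\nu$ denotes the function $(\xi,\zeta)\mapsto U_x^\mu(\xi)V_y^\nu(\zeta)$. This function lies in $L^2(\mathcal W,\lambda)$ because both factors are bounded in $L^2$ by Corollary~\ref{cor:crofton-centered} and $\lambda$ is a product probability measure.

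The key step is to move the inner product inside the Bochner integral defining $a_\eta$. Fix $(x,y)$. The map $(u,v)\mapsto h_{u,v}$ is Bochner integrable with respect to $\eta$: it is bounded by $1$ in $L^2$, and $|\eta|$ is finite. Measurability can be checked on the pointwise representative, or one can simply take the integrability as given by the definition of $\mathcal A$ in Corollary~\ref{cor:compact-duality}. Applying Lemma~\ref{lem:Bochner} with $h=U_x^\mu\otimes V_y^\nu$ gives
\[
\langle a_\eta, U_x^\mu\otimes V_y^\nu\rangle
=8\sqrt{\kappa_{\mathcal X}\kappa_{\mathcal Y}}\int \langle h_{u,v},U_x^\mu\otimes V_y^\nu\rangle_{L^2(\mathcal W,\lambda)}\,d\eta(u,v).
\]
Each integrand then factorizes by Fubini on the product measure $\lambda=\lambda_{\mathcal X}\otimes\lambda_{\mathcal Y}$. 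All functions involved are bounded and $\lambda$ is finite, so
\[
\langle h_{u,v},U_x^\mu\otimes V_y^\nu\rangle
=\Bigl(\int \1_{H_\xi}(u)U_x^\mu(\xi)\,d\lambda_{\mathcal X}\Bigr)\Bigl(\int\1_{G_\zeta}(v)V_y^\nu(\zeta)\,d\lambda_{\mathcal Y}\Bigr)
=\sqrt{\kappa_{\mathcal X}\kappa_{\mathcal Y}}\,\alpha_u^\mu(x)\beta_v^\nu(y).
\]

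Combining the two displays, the bilinear term equals
\[
-2\cdot 8\sqrt{\kappa_{\mathcal X}\kappa_{\mathcal Y}}\cdot\sqrt{\kappa_{\mathcal X}\kappa_{\mathcal Y}}\int\alpha_u^\mu(x)\beta_v^\nu(y)\,d\eta
=-16\kappa_{\mathcal X}\kappa_{\mathcal Y}\int\alpha_u^\mu(x)\beta_v^\nu(y)\,d\eta(u,v),
\]
which is the claimed second term. The scalar integral is well defined because $|\alpha|,|\beta|\le 1/2$ by Lemma~\ref{lem:linear-distance-y} and $|\eta|$ is finite. Joint measurability of $(u,v)\mapsto\alpha_u^\mu(x)\beta_v^\nu(y)$ follows from the explicit formula \eqref{eq:linear-alpha}, since $A_\mu$ is measurable by Fubini.

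The main obstacle is only the technical justification that $(u,v)\mapsto h_{u,v}$ is Borel measurable as an $L^2$-valued map. If this is needed explicitly, I would argue it by separability of $L^2$ and Pettis' theorem: weak measurability holds because $(u,v)\mapsto\langle h_{u,v},g\rangle=\int\1_{H_\xi}(u)\1_{G_\zeta}(v)g\,d\lambda$ is measurable by Fubini. Alternatively, $u\mapsto \1_{H_\cdot}(u)$ is continuous into $L^2(\lambda_{\mathcal X})$, since $\|\1_{H_\cdot}(u)-\1_{H_\cdot}(u')\|^2=\|u-u'\|/\kappa_{\mathcal X}$ by \eqref{eq:euclidean-crofton}. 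Hence $h$ is continuous into $L^2(\lambda)$, which settles measurability directly.
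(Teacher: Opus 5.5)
Your proof is correct and follows the route the paper indicates ("a direct application of Lemma~\ref{lem:Bochner} and Fubini"): pair $a_\eta$ with $U_x^\mu\otimes V_y^\nu$, move the inner product inside the Bochner integral, and factorize $\langle h_{u,v},U_x^\mu\otimes V_y^\nu\rangle$ by Fubini into $\sqrt{\kappa_{\mathcal X}\kappa_{\mathcal Y}}\,\alpha_u^\mu(x)\beta_v^\nu(y)$, which yields the constant $16\kappa_{\mathcal X}\kappa_{\mathcal Y}$. Your measurability argument is the one the paper uses to make $\mathcal A$ well defined: continuity of $(u,v)\mapsto h_{u,v}$ via the Crofton identity, as in Lemma~\ref{lem:holder-smooth}.
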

The proof is a direct application of Lemma \ref{lem:Bochner} and Fubini and is therefore omitted. This form is used to obtain regularity and boundedness of the costs $\{c_{a,\mu,\nu}:a\in\mathcal A\}$ (see Lemma~\ref{prop:ambient-regularity}).

As explained in the proof overview, the main obstacle in the sample
complexity analysis is that the induced costs are not generally semiconcave in either
argument, a simple one-dimensional example of which is given in
Appendix~\ref{app:semiconcavity}. To recover the target rate, we rely on the
following form, which cleanly isolates the dependence of the cost on the
$x$-variable.

\begin{proposition}\label{lem:signed-mixture}
Fix $\mu\in\cP(\mathcal X)$ and $\nu\in\cP(\mathcal Y)$, and let $\eta$ and $a_\eta$ be defined as above. Then for each $y\in \mathcal Y$, there exists an $x$-independent scalar $M_\eta(y) \in \R$ such that for any $x\in \mathcal{X}$,
\begin{equation}\label{eq:signed-mixture}
c_{a_\eta,\mu,\nu}(x,y)=M_\eta(y)+\int \|z-x\|d\theta_{\eta,y}(z),
\end{equation}
where $\theta_{\eta,y}$ is a signed Borel measure on $\mathcal{X}$ given by
\begin{equation}\label{eq:theta-def}
\theta_{\eta,y}:=-4\|V_y^\nu\|^2_{L^2(\mathcal{W}_\mathcal{Y},\lambda_\mathcal{Y})}\mu
+8\kappa_{\mathcal Y}(\pi_1)_\sharp\bigl(\beta_v^\nu(y)d\eta(u,v)\bigr),
\end{equation}
and $\pi_1:\mathcal X\times \mathcal Y\to \mathcal X$ is the projection onto the first component.  Moreover, for a nonnegative Borel measure $\Lambda_{\mu, \eta}=4\kappa_{\mathcal Y}\mu+4\kappa_{\mathcal Y}(\pi_1)_\sharp|\eta|$ on $\mathcal{X}$, it holds that $\Lambda_{\mu,\eta}(\mathcal X)\le 12\kappa_{\mathcal Y}$ and 
$|\theta_{\eta,y}|
\le \Lambda_{\mu,\eta}.$
\end{proposition}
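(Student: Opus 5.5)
We start from the alternative form \eqref{eq:cost-alternate} of Proposition~\ref{prop:cost-alternate} and substitute the explicit $x$-dependence of its two ingredients, which is already available. By Lemma~\ref{lem:linear-distance}, $\|U_x^\mu\|^2=b_\mu+\int\|z-x\|\,d\mu(z)$. By Lemma~\ref{lem:linear-distance-y}, $\alpha_u^\mu(x)=A_\mu(u)-\|u-x\|/(2\kappa_{\mathcal X})$. Neither $b_\mu$ nor $A_\mu(u)$ depends on $x$.

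For the first term, substitution gives
\[
-4\|U_x^\mu\|^2\|V_y^\nu\|^2
=
-4b_\mu\|V_y^\nu\|^2-4\|V_y^\nu\|^2\int\|z-x\|\,d\mu(z).
\]
For the second term,
\[
-16\kappa_{\mathcal X}\kappa_{\mathcal Y}\int\alpha_u^\mu(x)\beta_v^\nu(y)\,d\eta(u,v)
=
-16\kappa_{\mathcal X}\kappa_{\mathcal Y}\int A_\mu(u)\beta_v^\nu(y)\,d\eta
+8\kappa_{\mathcal Y}\int\|u-x\|\beta_v^\nu(y)\,d\eta(u,v).
\]
The last integral equals $\int\|z-x\|\,d\nu_y(z)$, where $\nu_y:=(\pi_1)_\sharp(\beta_v^\nu(y)\,d\eta(u,v))$. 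This follows from the change-of-variables formula for pushforwards of the finite signed measure $\beta^\nu_\cdot(y)\,d\eta$, applied to the bounded continuous integrand $\|\pi_1(u,v)-x\|$.

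Collecting the $x$-independent pieces, we set
\[
M_\eta(y):=-4b_\mu\|V_y^\nu\|^2-16\kappa_{\mathcal X}\kappa_{\mathcal Y}\int A_\mu(u)\beta_v^\nu(y)\,d\eta(u,v).
\]
This is finite because $|A_\mu|\le 1/2+1$ and $|\beta_v^\nu(y)|\le 1/2$; the latter is the analogue of Lemma~\ref{lem:linear-distance-y} on the $\mathcal Y$ side. With this choice, \eqref{eq:signed-mixture} holds with $\theta_{\eta,y}$ exactly as in \eqref{eq:theta-def}. Along the way we check measurability: $(u,v)\mapsto\beta_v^\nu(y)$ is Borel by Fubini, so $\theta_{\eta,y}$ is a well-defined finite signed Borel measure.

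For the domination, we bound each piece of $\theta_{\eta,y}$ separately.
\begin{itemize}
\item Corollary~\ref{cor:crofton-centered} on $\mathcal Y$ gives $\|V_y^\nu\|^2\le\kappa_{\mathcal Y}$, so $\bigl|-4\|V_y^\nu\|^2\mu\bigr|\le4\kappa_{\mathcal Y}\mu$.
\item For the pushforward term, we use $|(\pi_1)_\sharp\rho|\le(\pi_1)_\sharp|\rho|$ and $|\beta_v^\nu(y)\,d\eta|\le\tfrac12|\eta|$. Together these give $|8\kappa_{\mathcal Y}\nu_y|\le4\kappa_{\mathcal Y}(\pi_1)_\sharp|\eta|$.
\end{itemize}
Subadditivity of the total variation then yields $|\theta_{\eta,y}|\le\Lambda_{\mu,\eta}$. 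Finally, $\Lambda_{\mu,\eta}(\mathcal X)\le4\kappa_{\mathcal Y}+4\kappa_{\mathcal Y}\cdot2=12\kappa_{\mathcal Y}$, using $\|\eta\|_{\TV}\le2$.

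The only step requiring care is the pushforward inequality $|(\pi_1)_\sharp\rho|\le(\pi_1)_\sharp|\rho|$ for signed $\rho$. We verify it directly: for any Borel $B$ and any partition $\{B_i\}$ of $B$,
\[
\sum_i|\rho(\pi_1^{-1}B_i)|\le\sum_i|\rho|(\pi_1^{-1}B_i)=|\rho|(\pi_1^{-1}B).
\]
Everything else is direct substitution.
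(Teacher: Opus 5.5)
Your proposal is correct and follows the paper's proof essentially step for step. Both substitute Lemmas~\ref{lem:linear-distance} and~\ref{lem:linear-distance-y} into \eqref{eq:cost-alternate}, absorb the $x$-independent terms into $M_\eta(y)$, rewrite the $\|u-x\|$ term as an integral against $(\pi_1)_\sharp(\beta_v^\nu(y)\,d\eta)$, and dominate using $\|V_y^\nu\|^2\le\kappa_{\mathcal Y}$ and $|\beta_v^\nu(y)|\le\frac12$. Your explicit check of $|(\pi_1)_\sharp\rho|\le(\pi_1)_\sharp|\rho|$ spells out a step the paper uses without comment, and your bound $|A_\mu|\le 3/2$ is loose (in fact $0\le A_\mu\le\frac12$) but harmless.
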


The proof is given in Appendix~\ref{pf:signed-mixture} and combines \eqref{eq:cost-alternate} with the identities \eqref{eq:linear U} and \eqref{eq:linear-alpha} from Lemmas \ref{lem:linear-distance} and \ref{lem:linear-distance-y}. The $y$-variable analogue of Proposition \ref{lem:signed-mixture} follows by symmetry, and we omit the
formalities for brevity.
This exposes the obstruction to the concavity structure in the cost,
stemming from the possible signedness of the measure $\theta_{\eta,y}$. Nevertheless,
since $\theta_{\eta,y}$ is dominated by the positive measure $\Lambda_{\mu,\eta}$,
this form allows the marginal recentering argument given in Step 3.1 in the proof of
Theorem~\ref{thm:main-informal} in Section \ref{sec:pf-upper}, through which the desired entropy bounds for
the dual potential classes can be attained.
\subsection{Further remarks}
We close this section with several remarks on the role of centering in our dual formulation, the relation between our formulation and existing variational formulations of GW distances, and a potential extension of our linearization approach to general even--odd exponent pairs.

\begin{remark}[Centering and empirical measures]
\label{rem:centering}
Although there is typically no loss of generality in assuming that $\mu$ and $\nu$ are centered, i.e.,
$\int_{\mathcal X}x\,d\mu(x)=0$ and
$\int_{\mathcal Y}y\,d\nu(y)=0$, by the isometric
invariance property of GW (see, e.g.,~\cite[Theorem~5.1(a)]{memoli2011gromov}), our dual formulations do not require this
assumption. Instead, the centering is performed at the level of the
embeddings of points into the relevant $L^2$ spaces as in
\eqref{eq:global-centered-crofton} and
\eqref{eq:centered-crofton}. In particular, \eqref{eq:21dual} applies
directly to the empirical measures $\widehat\mu_n$ and
$\widehat\nu_n$, which need not be centered even when the true
marginals are. Statistically, this yields a difference from the dual
formulation of the quadratic $(2,2)$-GW distance $D^{2}_{2,2}$ in
Zhang et al.~\cite{zhang2024gromov}. Their decomposition of
$D^{2}_{2,2}$ into marginal and coupling terms is derived under the
assumption that the marginals are centered
(cf.~\cite[Section~5.1]{zhang2024gromov}), so an additional centering-bias
estimate is required when applying their formulation to empirical
measures
(cf.~\cite[Section~5.3.1]{zhang2024gromov}).
This estimate can be avoided entirely in our sample-complexity
analysis.
\end{remark}

\begin{remark}[Comparison of the linearization to the $(2,2)$-GW distance]
\label{rem:comparison-22}
Despite the different treatment of centering discussed in
Remark~\ref{rem:centering}, our linearization process and that of the
quadratic $(2,2)$-GW distance $D^{2}_{2,2}$ in Zhang et
al.~\cite{zhang2024gromov} are structurally similar. In their
formulation, the coupling-dependent component of $D^2_{2,2}$ is
linearized by introducing a finite-dimensional auxiliary matrix
$A\in\R^{d_x\times d_y}$, yielding the variational expression
$$
\inf_{A\in\R^{d_x\times d_y}}
\bigl\{32\|A\|_F^2+T_{c_A}(\mu,\nu)\bigr\},
\qquad
c_A(x,y)
=
-4\|x\|^2\|y\|^2-32 x^T Ay,
$$
where $\|\cdot\|_F$ is the Frobenius norm, and the optimal auxiliary
parameter for each fixed coupling $\gamma$ is a multiple of the
cross-correlation $\int xy^T\,d\gamma(x,y)$. Our variational
expression has essentially the same structure, with $x$ and $y$
replaced by the $L^2$ embeddings $U_x^\mu$ and $V_y^\nu$: the matrix
$A$ becomes an element $a$ of the corresponding infinite-dimensional
$L^2$ space, the Euclidean and Frobenius norms are replaced by the
$L^2$ norms, the bilinear term $x^\top Ay$ becomes
$\int aU_x^\mu(\xi)V_y^\nu(\zeta)\,d\lambda$, and the optimal
parameter is a multiple of the half-space correlation element
$M_\gamma$. The reason for this correspondence is that replacing the
Euclidean distance by its squared $L^2$ representation raises the
inner exponent from $1$ to $2$, so that our algebraic steps parallel
those of the $(2,2)$ case. Our linearization process may therefore be
understood as an infinite-dimensional analogue of theirs.
\end{remark}

\begin{remark}[Relation to other Hilbert-space formulations and computation]
For compactly supported marginals, \cite{houry2026gromov} derives a Hilbert-space dual formulation for entropic GW in which the Euclidean distances are replaced by costs $c_{\mathcal X}$ and $c_{\mathcal Y}$ of definite conditionally negative type. Such costs admit a representation $c_{\mathcal X}(x,x')=\|\phi(x)-\phi(x')\|_{\mathcal H_{\mathcal X}}^2$ and $c_{\mathcal Y}(y,y')=\|\psi(y)-\psi(y')\|_{\mathcal H_{\mathcal Y}}^2$ for (centered) Hilbert-space-valued maps $\phi:\mathcal X\to\mathcal H_{\mathcal X}$ and $\psi:\mathcal Y\to\mathcal H_{\mathcal Y}$. Denoting the resulting objective by $\operatorname{GW}_{\varepsilon}(\mu,\nu)$, they obtain the variational formulation \cite[Theorem 3.2, Remark C.3]{houry2026gromov}
\[
\operatorname{GW}_{\varepsilon}(\mu,\nu)
=
C_{\mu,\nu}
+
\inf_{\Gamma\in\operatorname{HS}(\mathcal H_{\mathcal Y},\mathcal H_{\mathcal X})}
\left\{
8\|\Gamma\|_{\operatorname{HS}}^2
+
T_{\varepsilon,c_\Gamma}(\mu,\nu)
\right\},
\]
where $C_{\mu,\nu}$ is a coupling-independent term, $\operatorname{HS}(\mathcal H_{\mathcal Y},\mathcal H_{\mathcal X})$ is the set of Hilbert--Schmidt operators $\Gamma: \mathcal H_{\mathcal Y} \to \mathcal H_{\mathcal X}$, and $\|\cdot\|_{\operatorname{HS}}$ and $\langle \cdot, \cdot \rangle_{\operatorname{HS}}$ are respectively the Hilbert--Schmidt norm and inner product. $T_{\varepsilon,c}$ denotes entropic OT with cost $c$ and regularization
parameter $\varepsilon\geq 0$, with $T_{0,c}=T_c$, and 
\[
c_\Gamma(x,y)
=
-4\|\phi(x)\|_{\mathcal H_{\mathcal X}}^2
   \|\psi(y)\|_{\mathcal H_{\mathcal Y}}^2
-16\left\langle
\Gamma,\phi(x)\otimes\psi(y)
\right\rangle_{\operatorname{HS}} .
\]
For a fixed coupling $\gamma$, the optimal operator is $\Gamma_\gamma = \int \phi(x)\otimes\psi(y)d\gamma(x,y)$ \cite[Theorem 3.3]{houry2026gromov}. Taking
$\phi(x)=U_x^\mu$ and $\psi(y)=V_y^\nu$, identifying
$L^2(\mathcal W_{\mathcal X},\lambda_{\mathcal X})\otimes
L^2(\mathcal W_{\mathcal Y},\lambda_{\mathcal Y})
\cong
\operatorname{HS}\left(
L^2(\mathcal W_{\mathcal Y},\lambda_{\mathcal Y}),
L^2(\mathcal W_{\mathcal X},\lambda_{\mathcal X})
\right)$, and setting $a=8\Gamma$ recovers the linearization
underlying our compact dual formulation \eqref{eq:21dual} with $\Gamma_\gamma$ corresponding to $M_\gamma$. The two formulations use this common variational mechanism in different directions. For our statistical analysis, our explicit squared-$L^2$ representation \eqref{eq:centeredl2embedding} and restriction of the auxiliary parameter to the structured class $\mathcal A$ enable the VC, recentering, and metric-entropy arguments given in Section \ref{sec:pf-upper}. In a computational direction,
\cite{houry2026gromov} leverages the Hilbert--Schmidt formulation to
develop scalable Sinkhorn-based methods for entropic GW, and since their variational identity for $\operatorname{GW}_\varepsilon(\mu,\nu)$ also holds at $\varepsilon = 0$, their formulation provides insights into computational methods for the plug-in estimator studied here.
\end{remark}

\begin{remark}[Extension to even-odd pairs]
\label{rmk:even-odd}
Although we use the representation of the Euclidean distance as a squared $L^2$-distance only for $2$-GW, it applies to general even--odd exponent pairs $(2r,2k+1)$ for $r \geq 1$ and $k \geq 0$. Writing
\begin{align*}
D_{2r,2k+1}^{2r}(\mu,\nu)
&=
\inf_{\gamma\in\Pi(\mu,\nu)}
\iint
\Bigl(
\|U_x^\mu-U_{x'}^\mu\|_{L^2(\mathcal W_{\mathcal X},
\lambda_{\mathcal X})}^{4k+2}
-
\|V_y^\nu-V_{y'}^\nu\|_{L^2(\mathcal W_{\mathcal Y},
\lambda_{\mathcal Y})}^{4k+2}
\Bigr)^{2r}
\,d\gamma(x,y)\,d\gamma(x',y')
\end{align*}
and expanding the integrand, we may obtain finitely many coupling-dependent terms analogous to those found in \cite[Appendix A]{paliy2026empirical}:
\[
    \left\langle M_{\gamma}^{a,b,m,\ell},M_{\gamma}^{a',b',m,\ell} \right\rangle_{\mathcal{H}_{m,\ell}}, \quad 
    M_{\gamma}^{a,b,m,\ell} := \int \|U_x^\mu\|_{L^2(\mathcal W_{\mathcal X},
\lambda_{\mathcal X})}^{2a}\|V_y^\nu\|_{L^2((\mathcal W_{\mathcal Y})}^{2b} (U_x^\mu)^{\otimes m}\otimes (V_y^\nu)^{\otimes \ell}\,d\gamma(x,y),
\]
where $a,a',b,b',m,\ell$ are nonnegative integers and $\mathcal{H}_{m,\ell} = L^2(\mathcal W_{\mathcal X},\lambda_{\mathcal X})^{\otimes m}\otimes L^2(\mathcal W_{\mathcal Y},\lambda_{\mathcal Y})^{\otimes\ell}$. Linearizing these terms may lead to a dual formulation for these higher-order cases. Establishing analogous sample-complexity bounds, however, would require an appropriate recentering argument and new metric-entropy estimates, which we leave for future work.
\end{remark}

%% file: sections/proofs-main-results.tex
\section{Proof of Theorem \ref{thm:main-informal}}\label{sec:proofs-main-results}\label{pf:sample-theorem}
We now prove the sample-complexity rate for the empirical plug-in estimator stated in Theorem~\ref{thm:main-informal}.
\subsection{Proof of upper bound} \label{sec:pf-upper} We first prove the upper bound \eqref{eq:upper-bound} using the duality established in Section~\ref{sec:duality}. For convenience, for $n\geq2$, let
$\psi_d(n)
:=
n^{-\frac{2}{d\vee 4}}
(\log n)^{\mathbf 1_{\{d=4\}}}$. We may assume without loss of generality that $d_x\le d_y$, as the other
case follows by exchanging the roles of the two marginals. Assuming also that $\mu$ and $\nu$ are centered by isometric invariance, take
$R\geq  \operatorname{diam}\mathcal{X} \vee \operatorname{diam}\mathcal{Y}$ so
that $\mathcal{X} \subset B^{d_x}_R(0)$ and
$\mathcal{Y} \subset B^{d_y}_{R}(0)$. We use this same $R$
in the construction of our dual form \eqref{eq:21dual}.

\subsubsection*{Step 1: Expansion via duality}
Since \eqref{eq:21dual} does not require the centering assumption, we can
apply it to both $(\mu,\nu)$ and $(\widehat\mu_n,\widehat\nu_n)$ and use
the generic bound
$|\inf_{z}F(z)-\inf_{z}G(z)|\le\sup_{z}|F(z)-G(z)|$ to split the empirical
estimation error into two terms as
\begin{equation}\label{eq:main-split}
\E\bigl|D_2^2(\widehat\mu_n,\widehat\nu_n)-D_2^2(\mu,\nu)\bigr|
\le
\underbrace{
\E|E_{\widehat\mu_n,\widehat\nu_n}-E_{\mu,\nu}|
}_{=:G_1}+
\underbrace{
\E \sup_{a\in\cA}
\left|T_{c_{a,\widehat\mu_n,\widehat\nu_n}}(\widehat\mu_n,\widehat\nu_n)
-T_{c_{a,\mu,\nu}}(\mu,\nu)\right|
}_{=:G_2}.
\end{equation}
The marginal term $G_1$ can be controlled by a standard $V$-statistic
concentration argument.

\begin{lemma}
\label{lem:v-statistics}
It holds that
$\E|E_{\widehat\mu_n,\widehat\nu_n}-E_{\mu,\nu}|
\lesssim_{R} n^{-1/2}$.
\end{lemma}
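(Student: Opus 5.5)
We will treat each of the three constituents of $E_{\mu,\nu}$ separately. Write
\[
P_2(\mu):=\iint\|x-x'\|^2\,d\mu\,d\mu,\qquad P_1(\mu):=\iint\|x-x'\|\,d\mu\,d\mu,
\]
and analogously $P_2(\nu)$, $P_1(\nu)$. Then
\[
E_{\mu,\nu}=P_2(\mu)+P_2(\nu)-P_1(\mu)P_1(\nu),
\]
and the same expression holds with $\widehat\mu_n,\widehat\nu_n$ in place of $\mu,\nu$.

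For the product term we use the identity $\widehat a\widehat b-ab=(\widehat a-a)\widehat b+a(\widehat b-b)$. Since all supports lie in balls of radius $R$, we have $0\le P_1(\cdot)\le 2R$ for both the true and the empirical measures. Consequently
\[
|E_{\widehat\mu_n,\widehat\nu_n}-E_{\mu,\nu}|\le |P_2(\widehat\mu_n)-P_2(\mu)|+|P_2(\widehat\nu_n)-P_2(\nu)|+2R\,|P_1(\widehat\mu_n)-P_1(\mu)|+2R\,|P_1(\widehat\nu_n)-P_1(\nu)|.
\]
This reduces the lemma to showing that $\E|V_n(k)-\theta(k)|\lesssim_R n^{-1/2}$ for a symmetric kernel $k$ bounded by $B\lesssim R^2$. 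Here
\[
V_n(k)=n^{-2}\sum_{i,j}k(X_i,X_j),\qquad \theta(k)=\iint k\,d\mu\,d\mu,
\]
and we apply this with $k(x,x')=\|x-x'\|$ and $k(x,x')=\|x-x'\|^2$, on both the $\mathcal X$ side and the $\mathcal Y$ side.

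For this $V$-statistic bound, we first separate the diagonal terms. They contribute at most $B/n$, since the diagonal vanishes for these kernels, and the off-diagonal sum carries the normalization $n^{-2}$ rather than $[n(n-1)]^{-1}$, which costs $O(B/n)$. It therefore remains to control the $U$-statistic
\[
U_n=\binom n2^{-1}\sum_{i<j}k(X_i,X_j).
\]
This is unbiased for $\theta(k)$, and its variance is bounded by the Hoeffding decomposition: $\operatorname{Var}(U_n)\le \frac{4}{n}\operatorname{Var}(k_1(X))+\frac{2}{n(n-1)}\operatorname{Var}(k(X,X'))\lesssim B^2/n$, where $k_1(x)=\int k(x,x')\,d\mu(x')$. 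Jensen's inequality then gives $\E|U_n-\theta|\le \sqrt{\operatorname{Var}(U_n)}\lesssim B n^{-1/2}$. Collecting the diagonal error and this bound yields $\E|V_n-\theta|\lesssim R^2 n^{-1/2}$.

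The independence between the two samples is not needed here, because each term involves only one sample once the product term has been split as above. There is no real obstacle; the only point requiring care is the uniform bound on $P_1(\widehat\mu_n)$ and $P_1(\widehat\nu_n)$ used in the product split, and this holds deterministically by compactness of the supports.
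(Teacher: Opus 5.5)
Your proof is correct. It has the same skeleton as the paper's: split $E_{\mu,\nu}=Q_\mu+Q_\nu-D_\mu D_\nu$, bound the product term using $0\le D\le 2R$ for both true and empirical measures, and reduce everything to bounded-kernel $V$-statistics on one sample at a time.

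The only real difference is the concentration step. The paper applies McDiarmid's bounded-differences inequality directly to $V_n^h$, using that changing one coordinate moves it by at most $4B/n$. It then computes the bias $|\E V_n^h-\iint h\,d\rho\,d\rho|\le 2B/n$ separately. You instead pass to the unbiased $U$-statistic, paying $O(B/n)$ for the diagonal and the renormalization, and bound $\E|U_n-\theta|\le\sqrt{\operatorname{Var}(U_n)}$ via Hoeffding's variance formula. Your variance bound $\operatorname{Var}(U_n)\le \frac{4}{n}\zeta_1+\frac{2}{n(n-1)}\zeta_2$ is correct.

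Both routes give $Bn^{-1/2}$. McDiarmid gives sub-Gaussian concentration, which is more than an $L^1$ bound needs. Your second-moment argument is slightly more elementary and fully sufficient here.
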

The proof is given in Appendix~\ref{pf:v-statistics}.

\subsubsection*{Step 2: Decomposing $G_2$}
Using the triangle inequality after inserting and subtracting
$T_{c_{a,\mu,\nu}}(\widehat\mu_n,\widehat\nu_n)$, followed by
$|T_c(\alpha,\beta)-T_{c'}(\alpha,\beta)|\le\|c-c'\|_\infty$ for bounded
costs $c,c'$ on $\mathcal{X}\times\mathcal{Y}$ and any
$(\alpha,\beta)\in\mathcal{P}(\mathcal{X})\times\mathcal{P}(\mathcal{Y})$,
$G_2$ splits as
\begin{align}\label{eq:main-split2}
  G_2
  &\le
  \E\sup_{a\in\cA}
    \bigl|T_{c_{a,\widehat\mu_n,\widehat\nu_n}}(\widehat\mu_n,\widehat\nu_n)
    -T_{c_{a,\mu,\nu}}(\widehat\mu_n,\widehat\nu_n)\bigr|
  +
  \E\sup_{a\in\cA}
    \bigl|T_{c_{a,\mu,\nu}}(\widehat\mu_n,\widehat\nu_n)
    -T_{c_{a,\mu,\nu}}(\mu,\nu)\bigr|
  \notag\\
  &\le
  \underbrace{\E\sup_{a\in\cA}
    \|c_{a,\widehat\mu_n,\widehat\nu_n}-c_{a,\mu,\nu}\|_\infty}_{=:L_1}
  +
  \underbrace{\E\sup_{a\in\cA}
    \bigl|T_{c_{a,\mu,\nu}}(\widehat\mu_n,\widehat\nu_n)
    -T_{c_{a,\mu,\nu}}(\mu,\nu)\bigr|}_{=:L_2}.
\end{align}

The term $L_1$ quantifies the stability of the cost in its marginal
arguments. For $\alpha,\alpha'\in\mathcal{P}(\mathcal{X})$ and
$\beta,\beta'\in\mathcal{P}(\mathcal{Y})$, define
$\Delta_{\mathcal X}(\alpha,\alpha')
:= \sup_{\xi\in\mathcal W_{\mathcal X}}\abs{\alpha(H_\xi)-\alpha'(H_\xi)}$
and
$\Delta_{\mathcal Y}(\beta,\beta')
:= \sup_{\zeta\in\mathcal W_{\mathcal Y}}\abs{\beta(G_\zeta)-\beta'(G_\zeta)}$,
which measure the discrepancy of the empirical measures from the true ones
over the affine half-spaces. We first observe the following stability
result.
\begin{lemma}[Stability of the cost in the marginals]\label{lem:cost-stability}
For all $\alpha,\alpha'\in\mathcal{P}(\mathcal{X})$ and
$\beta,\beta'\in\mathcal{P}(\mathcal{Y})$,
\begin{equation}\label{eq:cost-stability}
  \sup_{a\in\cA}
  \|c_{a,\alpha,\beta}-c_{a,\alpha',\beta'}\|_{\infty,\mathcal X\times\mathcal Y}
  \le 40\kappa_{\mathcal X}\kappa_{\mathcal Y}
  \bigl(\Delta_{\mathcal X}(\alpha,\alpha')+\Delta_{\mathcal Y}(\beta,\beta')\bigr).
\end{equation}
\end{lemma}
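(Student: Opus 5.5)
We split the cost \eqref{eq:cost-original} into its two pieces and bound the change of each when $(\alpha,\beta)$ is replaced by $(\alpha',\beta')$. The basic observation is that the embeddings themselves move only through the marginal term:
\[
U_x^\alpha(\xi)-U_x^{\alpha'}(\xi)=-\sqrt{\kappa_{\mathcal X}}\bigl(\alpha(H_\xi)-\alpha'(H_\xi)\bigr).
\]
Hence $\|U_x^\alpha-U_x^{\alpha'}\|_\infty\le\sqrt{\kappa_{\mathcal X}}\,\Delta_{\mathcal X}(\alpha,\alpha')$ uniformly in $x$. Since $\lambda_{\mathcal X}$ is a probability measure, the same bound holds in $L^2$. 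The analogous statement holds for $V^\beta_y$. Corollary~\ref{cor:crofton-centered} also gives the sup bounds $\|U_x^\alpha\|_\infty\le\sqrt{\kappa_{\mathcal X}}$ and $\|U_x^\alpha\|_{L^2}^2\le\kappa_{\mathcal X}$. Note that $|\1_{H}(x)-\alpha(H)|\le1$ pointwise, so these hold for every $\alpha$.

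\emph{First term.} We write $p=\|U_x^\alpha\|^2$, $p'=\|U_x^{\alpha'}\|^2$, and similarly $q,q'$ for the $V$-embeddings, so that
\[
|pq-p'q'|\le |p-p'|\,q+p'\,|q-q'|.
\]
We bound $|p-p'|$ by $\|U^\alpha_x-U^{\alpha'}_x\|\,(\|U^\alpha_x\|+\|U^{\alpha'}_x\|)\le 2\kappa_{\mathcal X}\Delta_{\mathcal X}$. Alternatively, expanding the squares pointwise gives
\[
\bigl|(\1-\alpha(H))^2-(\1-\alpha'(H))^2\bigr|\le 2|\alpha(H)-\alpha'(H)|,
\]
which yields the same bound. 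With $q\le\kappa_{\mathcal Y}$ and $p'\le\kappa_{\mathcal X}$, the first term of the cost changes by at most $4\cdot 2\kappa_{\mathcal X}\kappa_{\mathcal Y}(\Delta_{\mathcal X}+\Delta_{\mathcal Y})=8\kappa_{\mathcal X}\kappa_{\mathcal Y}(\Delta_{\mathcal X}+\Delta_{\mathcal Y})$.

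\emph{Second term.} For $a\in\mathcal A$ we use the uniform bound $\|a\|_\infty\le16\sqrt{\kappa_{\mathcal X}\kappa_{\mathcal Y}}$ noted after Corollary~\ref{cor:compact-duality}. We telescope
\[
U^\alpha_x\otimes V^\beta_y-U^{\alpha'}_x\otimes V^{\beta'}_y=(U^\alpha_x-U^{\alpha'}_x)\otimes V^\beta_y+U^{\alpha'}_x\otimes(V^\beta_y-V^{\beta'}_y).
\]
Each piece is bounded pointwise by $\sqrt{\kappa_{\mathcal X}\kappa_{\mathcal Y}}\,\Delta$, using the sup bounds on $U,V$ and on their differences. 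Integrating against $a$ with respect to the probability measure $\lambda$ gives
\[
2\,\|a\|_\infty\sqrt{\kappa_{\mathcal X}\kappa_{\mathcal Y}}(\Delta_{\mathcal X}+\Delta_{\mathcal Y})\le 32\kappa_{\mathcal X}\kappa_{\mathcal Y}(\Delta_{\mathcal X}+\Delta_{\mathcal Y}).
\]

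Summing the two contributions gives $8+32=40$, which is exactly the constant in \eqref{eq:cost-stability}. This strongly suggests the intended route: use sup-norm bounds on $a$ rather than $L^2$ Cauchy--Schwarz, since the latter would only give an $L^2$ discrepancy and would require $\|a\|_{L^2}$. The only delicate points are bookkeeping. First, $\mathcal A$ is defined independently of the marginals, so the supremum over $a$ is harmless. Second, the pointwise bound $\|a\|_\infty\le 16\sqrt{\kappa_{\mathcal X}\kappa_{\mathcal Y}}$ should be justified via the representative $a(\xi,\zeta)=8\sqrt{\kappa_{\mathcal X}\kappa_{\mathcal Y}}\,\eta(H_\xi\times G_\zeta)$ together with $\|\eta\|_{\TV}\le2$. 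Beyond this I do not foresee a genuine obstacle.
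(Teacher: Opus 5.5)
Your proof is correct and follows the paper's argument. It uses the same split of \eqref{eq:cost-original}, and the same bound $2\kappa_{\mathcal X}\Delta_{\mathcal X}(\alpha,\alpha')$ on the change of $\|U_x\|^2$ (the paper uses exactly your pointwise inequality $|(u-p)^2-(u-p')^2|\le 2|p-p'|$), which gives the constant $8$. It also uses the same telescoping of $U_x\otimes V_y$, which gives $32$.

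The only difference is in how $a$ is paired with the tensor difference. The paper uses $L^2$ Cauchy--Schwarz, with $\|a\|_{L^2(\mathcal W,\lambda)}\le 16\sqrt{\kappa_{\mathcal X}\kappa_{\mathcal Y}}$ and the $L^2$ bound \eqref{eq:tensor-bound}, and this yields the same $32$. So your aside that the Cauchy--Schwarz route would be inadequate is mistaken, although your $L^\infty$--$L^1$ pairing is equally valid.
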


\noindent The proof is given in Appendix~\ref{pf:cost-stability} and is
mostly arithmetic based on the definition of the cost. Since each $H_\xi$
is the restriction of an affine half-space to $\mathcal{X}$, the class
$\{H_\xi:\xi\in\mathcal W_{\mathcal X}\}$ has VC dimension at most
$d_x+1$, and the standard VC maximal inequality (see, e.g.,
\cite[Proposition 3.6.6, Theorem 3.6.9, Corollary 3.5.8]{GineNickl2016}) gives
$\E\Delta_{\mathcal X}(\widehat\mu_n,\mu)\lesssim_{d_x}n^{-1/2}$ and
$\E\Delta_{\mathcal Y}(\widehat\nu_n,\nu)\lesssim_{d_y}n^{-1/2}$.
Combining this with \eqref{eq:cost-stability} (with $\alpha=\widehat\mu_n$,
$\alpha'=\mu$, $\beta=\widehat\nu_n$, and $\beta'=\nu$) and taking expectations, we obtain the parametric rate
$L_1 \lesssim_{R,d_x,d_y} n^{-1/2}$.

\subsubsection*{Step 3: Estimating $L_2$}
The estimation of $L_2$ is more involved, and we split the argument into
three substeps.

\emph{Step 3.1: Marginal recentering of $c_{a_\eta,\mu,\nu}$.}
Fix $a_{\eta} \in \mathcal{A}$. We first construct a correction function
$\phi_{\mu,\eta}:\mathcal{X}\to\R$, depending only on $\mu$ and $\eta$,
such that both $\phi_{\mu,\eta}$ and the recentered cost
$x\mapsto c^\circ_{a_\eta,\mu,\nu}(x,y)
:= c_{a_\eta,\mu,\nu}(x,y)+\phi_{\mu,\eta}(x)$ for each $y\in \mathcal{Y}$
are restrictions to $\mathcal{X}$ of concave functions on
$Q_\mathcal{X}:= B_{R}^{d_x}(0)$. To this end, recall the alternative form
\eqref{eq:cost-alternate} of $c_{a_\eta,\mu,\nu}$ and the associated
measures $\theta_{\eta,y}$ and $\Lambda_{\mu,\eta}$ in Proposition \ref{lem:signed-mixture}. Since
$-\int \|z-x\|\,d\rho(z)$ is a concave function of $x$ whenever the measure
$\rho$ is positive, a natural choice is to subtract from
$\int \|z-x\|\,d\theta_{\eta,y}(z)$ in \eqref{eq:signed-mixture} the
$\Lambda_{\mu,\eta}$-analogue. Namely, we take the correction function
$\phi_{\mu,\eta}:\mathcal{X}\to\R$ given by
\begin{equation}\label{eq:phi}
  \phi_{\mu,\eta}(x):=-\int \|z-x\|\,d\Lambda_{\mu,\eta}(z).
\end{equation}

Since $|\theta_{\eta,y}|\le \Lambda_{\mu,\eta}$, we may fix a Radon--Nikodym derivative
$p_{\eta,y}:=\frac{d\theta_{\eta,y}}{d\Lambda_{\mu,\eta}}$ with
$|p_{\eta,y}|\le 1$ $\Lambda_{\mu,\eta}$-a.e. Substituting
\eqref{eq:signed-mixture} and \eqref{eq:phi} into the definition of
$c^\circ_{a_\eta,\mu,\nu}$ and writing
$q_{\eta,y}:=1-p_{\eta,y}\in[0,2]$, the recentered cost admits the
expression
\begin{equation}\label{eq:positive-mixture}
  c^\circ_{a_\eta,\mu,\nu}(x,y)
  = M_\eta(y)+\int q_{\eta,y}(z)\bigl(-\|z-x\|\bigr)\,d\Lambda_{\mu,\eta}(z).
\end{equation}
By definition, the formulas \eqref{eq:phi} and \eqref{eq:positive-mixture}
(with $y\in \mathcal{Y}$ fixed) extend verbatim to $Q_\mathcal{X}$, and we
denote the corresponding extensions by $\overline{\phi}_{\mu,\eta}$ and
$\overline{c}_{a_\eta,\mu,\nu}(\cdot, y)$. Since $\Lambda_{\mu,\eta}$ is positive and
$q_{\eta,y}\ge 0$, both $\overline{\phi}_{\mu,\eta}$ and
$x\mapsto \overline{c}_{a_\eta,\mu,\nu}(x,y)$ are concave on
$Q_\mathcal{X}$.

\emph{Step 3.2: Expressing $L_2$ using $c^\circ_{a_\eta,\mu,\nu}$ and $\phi_{\mu,\eta}$.}
Since $\phi_{\mu,\eta}$ is a correction in the $x$-variable, for any marginal pair $(\alpha,\beta)$,
\begin{equation}\label{eq:OT-change}
    T_{c_{a_\eta,\mu,\nu}}(\alpha,\beta)
= T_{c^\circ_{a_\eta,\mu,\nu}}(\alpha,\beta) - \int \phi_{\mu,\eta}\,d\alpha.
\end{equation}
Define the recentered cost class and the correction class
\[\mathcal C^\circ_{\mu,\nu}
:= \bigl\{c^\circ_{a_\eta,\mu,\nu}: \|\eta\|_{\TV}\le 2\bigr\},
\qquad
\Phi_\mu := \bigl\{\phi_{\mu,\eta}: \|\eta\|_{\TV}\le 2\bigr\}.\]
For a continuous cost $c$ on $\mathcal{X}\times\mathcal{Y}$ and
$f\in C(\mathcal{X})$, recall the $c$-transform
$f^{c}(y)=\inf_{x\in\mathcal X}\{c(x,y)-f(x)\}$, and let $\mathcal F_c$
denote the class of $c$-concave potentials $f$ (i.e., $f=(f^c)^c$)
normalized so that $\sup_{\mathcal X}f=0$. The dual potential classes
associated with the recentered costs are
\begin{equation}\label{eq:potential-classes}
  \mathcal F^\circ_{\mu,\nu}
   := \bigcup_{c^\circ\in\mathcal C^\circ_{\mu,\nu}}\mathcal F_{c^\circ}
   \subset C(\mathcal X),
  \qquad
  \mathcal G^\circ_{\mu,\nu}
   := \bigcup_{c^\circ\in\mathcal C^\circ_{\mu,\nu}}
      \bigl\{f^{c^\circ}: f\in\mathcal F_{c^\circ}\bigr\}
   \subset C(\mathcal Y).
\end{equation}
Noting that an optimal dual potential exists for each
$c^\circ \in \mathcal C^\circ_{\mu,\nu}$
by \cite[Theorem~5.10(iii)]{villani2009optimal}, Kantorovich duality implies that the suprema of 
empirical errors for OT costs $T_{c^\circ}$ can be bounded as
\begin{equation}\label{eq:OT-dual-ep-bound}
\sup_{c^\circ\in \mathcal C^\circ_{\mu,\nu}}
\left|T_{c^\circ}(\widehat\mu_n,\widehat\nu_n)
- T_{c^\circ}(\mu,\nu)\right|
\leq
\sup_{f\in\mathcal F^\circ_{\mu,\nu}}
\Bigl|\int f\,d(\widehat{\mu}_n-\mu)\Bigr|
+\sup_{g\in\mathcal G^\circ_{\mu,\nu}}
\Bigl|\int g\,d(\widehat{\nu}_n-\nu)\Bigr|.
\end{equation}
Combining \eqref{eq:OT-change} and \eqref{eq:OT-dual-ep-bound}, and using the triangle inequality, $L_2$ is bounded by
the suprema of empirical processes indexed by $\Phi_{\mu}$,
$\mathcal F^\circ_{\mu,\nu}$, and
$\mathcal G^\circ_{\mu,\nu}$:
\begin{equation}\label{eq:G22-three-terms}
 L_2
 \leq
 \underbrace{
 \E\sup_{\phi\in\Phi_\mu}\Bigl|\int \phi\,d(\widehat{\mu}_n-\mu)\Bigr|
 }_{=:S_1}
+\underbrace{
 \E\sup_{f\in\mathcal F^\circ_{\mu,\nu}}
 \Bigl|\int f\,d(\widehat{\mu}_n-\mu)\Bigr|
 }_{=:S_2}
+\underbrace{
 \E\sup_{g\in\mathcal G^\circ_{\mu,\nu}}
 \Bigl|\int g\,d(\widehat{\nu}_n-\nu)\Bigr|
 }_{=:S_3}.
\end{equation}

\emph{Step 3.3: Empirical process bounds.}
It remains to bound each term on the right-hand side of
\eqref{eq:G22-three-terms}. Suppose that $\mathcal F$ is a uniformly
bounded function class whose metric entropy satisfies
$\log \mathcal N(\varepsilon,\mathcal F,\|\cdot\|_\infty)
\leq K\varepsilon^{-l/2}$ for $0<\varepsilon\leq 1$, where $K,l>0$.
Assuming without loss of generality that $0\in\mathcal F$, upon normalizing
the uniform bound to one, we obtain from Dudley's entropy integral
(cf. \cite[Theorem~16]{vonLuxburgBousquet2004})
\begin{align*}
\E\sup_{f\in\mathcal{F}}
\left|\int fd(\widehat{\mu}_n-\mu)\right|
&\lesssim
\inf_{0<\alpha\leq 1}
\left\{
\alpha
+
\frac{1}{\sqrt n}
\int_\alpha^1
\sqrt{\log \mathcal N(\varepsilon,\mathcal{F},\|\cdot\|_\infty)}
d\varepsilon
\right\} \\
&\lesssim
\inf_{0<\alpha\leq 1}
\left\{
\alpha
+
\frac{1}{\sqrt n}
\int_\alpha^1
\varepsilon^{-l/4}
d\varepsilon
\right\} \\
&\lesssim
\psi_l(n),
\end{align*}
where the hidden constants depend on $l$, $K$, and the uniform bound on
$\mathcal F$.\footnote{The last inequality follows upon noticing that the
integral remains finite as $\alpha\downarrow0$ when $l<4$. When $l\geq4$,
differentiating in $\alpha$ and setting the derivative to zero gives the
balancing choice $\alpha\asymp n^{-2/l}$, which yields $\psi_l(n)$.}
Our argument thus reduces to
bounding the metric entropy of $\Phi_\mu$,
$\mathcal F^\circ_{\mu,\nu}$, and
$\mathcal G^\circ_{\mu,\nu}$.

In what follows, we will repeatedly use the fact that if $T$ is an
$L$-Lipschitz map between normed spaces, then
$\mathcal N(\varepsilon,T(\mathcal F),\|\cdot\|)\le
\mathcal N(\varepsilon/L,\mathcal F,\|\cdot\|)$ for any class
$\mathcal F$, so Lipschitz transforms preserve metric entropy exponents. We first collect the regularity of the following
function classes.

\begin{lemma}[Regularity of costs and corrections]
\label{prop:ambient-regularity}
Let $\mathcal C_{\mu,\nu}=\{c_{a_\eta,\mu,\nu}:\|\eta\|_{\TV}\le 2\}$
be the class of original costs, let $\overline\Phi_\mu$ be the class of
extended functions on $Q_{\mathcal X}$ for $\Phi_\mu$, and let
$\overline{\mathcal C}_{\mu,\nu}$ be the class of extended functions
$x\mapsto \overline{c}_{a_\eta,\mu,\nu}(x,y)$ of
$\mathcal C^\circ_{\mu,\nu}$ on $Q_{\mathcal X}$. Then
\begin{enumerate}
\item $\mathcal C_{\mu,\nu}$ and $\mathcal C^\circ_{\mu,\nu}$ are
uniformly bounded and uniformly Lipschitz in each variable;
\item $\overline\Phi_\mu$ and $\overline{\mathcal C}_{\mu,\nu}$ are
uniformly bounded, uniformly Lipschitz, and concave on $Q_{\mathcal X}$.
\end{enumerate}
\end{lemma}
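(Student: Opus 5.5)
The plan is to verify each claim directly from the explicit representations in Proposition~\ref{prop:cost-alternate} and Proposition~\ref{lem:signed-mixture}, together with the pointwise identities of Lemmas~\ref{lem:linear-distance} and~\ref{lem:linear-distance-y}. All constants should depend only on $\kappa_{\mathcal X},\kappa_{\mathcal Y}$, and hence only on $R,d_x,d_y$.

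\textbf{Part (1) for $\mathcal C_{\mu,\nu}$.} I use the form \eqref{eq:cost-alternate}.

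\emph{Boundedness.} By Corollary~\ref{cor:crofton-centered}, $\|U_x^\mu\|^2\le\kappa_{\mathcal X}$ and $\|V_y^\nu\|^2\le\kappa_{\mathcal Y}$. Lemma~\ref{lem:linear-distance-y} and its $\mathcal Y$-analogue give $|\alpha_u^\mu|,|\beta_v^\nu|\le\frac12$. Since $\|\eta\|_{\TV}\le2$, this yields $|c_{a_\eta,\mu,\nu}|\le 4\kappa_{\mathcal X}\kappa_{\mathcal Y}+8\kappa_{\mathcal X}\kappa_{\mathcal Y}$.

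\emph{Lipschitz in $x$ (and symmetrically in $y$).} The map $x\mapsto\|U_x^\mu\|^2$ is $1$-Lipschitz by Lemma~\ref{lem:linear-distance}, and it is multiplied by the bounded factor $4\|V_y^\nu\|^2\le4\kappa_{\mathcal Y}$. The maps $x\mapsto\alpha_u^\mu(x)$ are $1/(2\kappa_{\mathcal X})$-Lipschitz uniformly in $u$, and $|\beta_v^\nu(y)|\le\frac12$. Integrating against $|\eta|$, whose total mass is at most $2$, gives a Lipschitz constant of order $\kappa_{\mathcal Y}$. All bounds are uniform in $\eta$.

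\textbf{Part (2) for the corrections $\overline\Phi_\mu$.} The function $\overline\phi_{\mu,\eta}(x)=-\int\|z-x\|\,d\Lambda_{\mu,\eta}(z)$ is a negative mixture of convex functions, hence concave on $Q_{\mathcal X}$. It is $\Lambda_{\mu,\eta}(\mathcal X)$-Lipschitz, and by Proposition~\ref{lem:signed-mixture} this mass is at most $12\kappa_{\mathcal Y}$. For $x\in Q_{\mathcal X}$ and $z\in\mathcal X\subset B_R$ we have $\|z-x\|\le2R$, so $|\overline\phi_{\mu,\eta}|\le24R\kappa_{\mathcal Y}$ on $Q_{\mathcal X}$.

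\textbf{Part (2) for $\overline{\mathcal C}_{\mu,\nu}$.} I use \eqref{eq:positive-mixture}. The integral term is concave because $q_{\eta,y}\ge0$. It is Lipschitz with constant at most $\int q_{\eta,y}\,d\Lambda_{\mu,\eta}\le24\kappa_{\mathcal Y}$, and bounded by $48R\kappa_{\mathcal Y}$ on $Q_{\mathcal X}$.

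The remaining issue, which I expect to be the only real obstacle, is a uniform bound on $M_\eta(y)$. I do not want to track $M_\eta$ through the proof of Proposition~\ref{lem:signed-mixture}. Instead, I evaluate \eqref{eq:signed-mixture} at a fixed point $x_0\in\mathcal X$:
\[
M_\eta(y)=c_{a_\eta,\mu,\nu}(x_0,y)-\int\|z-x_0\|\,d\theta_{\eta,y}(z).
\]
The first term is bounded by the boundedness part of (1). The second is bounded by $2R\,|\theta_{\eta,y}|(\mathcal X)\le 2R\,\Lambda_{\mu,\eta}(\mathcal X)\le24R\kappa_{\mathcal Y}$. Hence $\sup_{\eta,y}|M_\eta(y)|<\infty$, and the extended functions are uniformly bounded.

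\textbf{Part (1) for $\mathcal C^\circ_{\mu,\nu}$.} Write $c^\circ=c+\phi_{\mu,\eta}$.
\begin{itemize}
\item It is uniformly bounded and uniformly Lipschitz in $x$, since both $\mathcal C_{\mu,\nu}$ and $\Phi_\mu$ are.
\item It is uniformly Lipschitz in $y$, since $\phi_{\mu,\eta}$ does not depend on $y$; the bound is inherited from $\mathcal C_{\mu,\nu}$.
\end{itemize}
The only point needing care is that the Radon--Nikodym choice of $p_{\eta,y}$ in \eqref{eq:positive-mixture} plays no role in these bounds. This is because \eqref{eq:positive-mixture} is used only for concavity, while boundedness and the Lipschitz property of $c^\circ$ on $\mathcal X$ follow from the original form.
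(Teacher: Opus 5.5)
Your proposal is correct and follows the paper's proof essentially step by step, with the same constants. The only cosmetic difference is in the uniform bound for $\overline{\mathcal C}_{\mu,\nu}$: you bound $M_\eta(y)$ by evaluating \eqref{eq:signed-mixture} at $x_0\in\mathcal X$, whereas the paper anchors $\overline c_{a_\eta,\mu,\nu}(\cdot,y)$ at $x_0$, where it equals $c^\circ_{a_\eta,\mu,\nu}(x_0,y)$, and then uses the $24\kappa_{\mathcal Y}$-Lipschitz bound; both routes give $12\kappa_{\mathcal X}\kappa_{\mathcal Y}+72\kappa_{\mathcal Y}R$.
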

The proof is given in Appendix~\ref{pf:ambient-regularity}. We do not
state the uniform constants explicitly here, but they depend on $R$,
$\kappa_\mathcal{X}$, and $\kappa_\mathcal{Y}$ (hence on $d_x$ and $d_y$) and are recorded at
the end of the proof of each part.

\emph{Step 3.3.1: Bounding $S_1$.}
Corollary~2.7.10 in \cite{VanDerVaartWellner1996} states that, given $L>0$
and a compact convex subset $Q\subset\mathbb{R}^d$, the class
$\mathcal{F} := \{ f:Q\to[0,1] : f \text{ convex and }
\|f\|_{\Lip}\le L \}$ satisfies
$\log \mathcal N(\varepsilon,\mathcal{F},\|\cdot\|_\infty)
\le K(1+L)^{d/2}\varepsilon^{-d/2}$, where $K$ depends only on $d$ and
$Q$. The same bound holds for concave functions by replacing $f$ with
$1-f$. More generally, if $\mathcal{F}$ is a class of concave functions on
$Q$ satisfying $\|f\|_{\infty,Q}\le B$ and
$\|f\|_{\Lip}\le L$, then applying the preceding metric entropy bound to
the normalized functions $\widetilde{f} := (f+B)/(2B)$ gives
\begin{align}\label{eq:metric entropy conc}
  \log \mathcal N\bigl(\varepsilon,\mathcal{F},\|\cdot\|_\infty\bigr)
    \le \log \mathcal N\Bigl(\tfrac{\varepsilon}{2B},
    \{\widetilde{f}\}_{f\in \mathcal{F}},\|\cdot\|_\infty\Bigr)
    \lesssim_{d,Q,B,L} \varepsilon^{-d/2}.
\end{align}
This bound directly applies to $\overline{\Phi}_\mu$ by
Lemma~\ref{prop:ambient-regularity}, with $B$ and $L$ depending on $R$ and
$d_y$, and $Q$ depending on $R$ and $d_x$. Hence
$\log \mathcal N(\varepsilon, \overline{\Phi}_\mu, \|\cdot\|_\infty)
\lesssim_{R,d_x,d_y} \varepsilon^{-d_x/2}$. Since restriction from
$Q_\mathcal{X}$ to $\mathcal X$ is $1$-Lipschitz in the sup norm, the
metric entropy of $\Phi_\mu$ is bounded by that of its extended class
$\overline{\Phi}_\mu$. Hence
$\log \mathcal N(\varepsilon, \Phi_\mu, \|\cdot\|_\infty)
\lesssim_{R,d_x,d_y} \varepsilon^{-d_x/2}$, and consequently
$S_1 \lesssim_{R,d_x,d_y} \psi_{d_x}(n)$ by Dudley's entropy integral formula.

\emph{Step 3.3.2: Bounding $S_2$.}
For estimating the metric entropy of $\mathcal F^\circ_{\mu,\nu}$,
recall that each $f \in \mathcal F_{c^\circ}$ is the
$c^\circ$-conjugate of $f^{c^\circ}$.
With this characterization, we see that the potential class
$\mathcal F^\circ_{\mu,\nu}$ inherits the regularity and concavity of
the cost family $\overline{\mathcal C}_{\mu,\nu}$ established in
Lemma~\ref{prop:ambient-regularity}.
\begin{lemma}[Regularity of dual potentials] \label{lem:reg of F}
For each $f\in \mathcal{F}_{c^\circ} \subset \mathcal F^\circ_{\mu,\nu}$, define its extension
$\overline{f}:Q_\mathcal{X} \to \R$ via the
$\overline{c}$-conjugate transform
$\overline{f}(x) = \inf_{y\in \mathcal{Y}}
\{\overline{c}(x, y) - f^{c^\circ}(y)\}$. Then the extended function
class $\overline{\mathcal F}_{\mu,\nu}
:= \{\overline{f}: f\in \mathcal F_{c^\circ},    \ c^\circ \in \mathcal C^\circ_{\mu,\nu}\}$ is uniformly
bounded, uniformly Lipschitz, and concave on $Q_\mathcal{X}$.
\end{lemma}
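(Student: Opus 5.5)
The plan is to derive all three properties of $\overline f$ from the corresponding properties of the extended cost family $\overline{\mathcal C}_{\mu,\nu}$ in Lemma~\ref{prop:ambient-regularity}(2). The key observation is that $\overline f$ is an infimum over $y\in\mathcal Y$ of the functions $x\mapsto \overline c(x,y)-f^{c^\circ}(y)$. Each of these is concave and $L$-Lipschitz on $Q_{\mathcal X}$, where $L$ is the uniform Lipschitz constant of $\overline{\mathcal C}_{\mu,\nu}$. A pointwise infimum of concave functions is concave. A pointwise infimum of $L$-Lipschitz functions is $L$-Lipschitz, provided it is finite somewhere; finiteness will follow from the boundedness step below. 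This gives concavity and the uniform Lipschitz bound at once, independently of the particular $f$ and $c^\circ$.

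The remaining work is uniform boundedness. Let $B$ be the uniform sup bound of $\overline{\mathcal C}_{\mu,\nu}$ on $Q_{\mathcal X}\times\mathcal Y$ and $B^\circ$ that of $\mathcal C^\circ_{\mu,\nu}$ on $\mathcal X\times\mathcal Y$, both from Lemma~\ref{prop:ambient-regularity}. Since $\overline c$ agrees with $c^\circ$ on $\mathcal X\times\mathcal Y$, we have $\overline f|_{\mathcal X}=(f^{c^\circ})^{c^\circ}=f$ because $f$ is $c^\circ$-concave. The normalization $\sup_{\mathcal X}f=0$, together with $f^{c^\circ}(y)=\inf_x\{c^\circ(x,y)-f(x)\}$, gives the two-sided bound
\[
-B^\circ\le f^{c^\circ}(y)\le \inf_x c^\circ(x,y)-\inf_x f(x).
\]
To control $\inf_x f$, I use that $f=(f^{c^\circ})^{c^\circ}$ is a $c^\circ$-transform of a function on $\mathcal Y$. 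A standard argument then shows that its oscillation over $\mathcal X$ is at most $2B^\circ$, since $c^\circ(x,y)-c^\circ(x',y)\le 2B^\circ$ for every $y$. Hence $f\ge -2B^\circ$, and so $\|f^{c^\circ}\|_\infty\le 3B^\circ$. It follows that $|\overline f(x)|\le B+3B^\circ$ on $Q_{\mathcal X}$, and in particular $\overline f$ is finite everywhere, which completes the Lipschitz step.

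The main point requiring care is checking that $\overline f$ is well defined and coincides with $f$ on $\mathcal X$. This needs the extension $\overline c(\cdot,y)$ from \eqref{eq:positive-mixture} to restrict to $c^\circ(\cdot,y)$ on $\mathcal X$, which holds by construction. All constants depend only on those of Lemma~\ref{prop:ambient-regularity}, hence only on $R$, $d_x$, $d_y$, and not on $\mu$, $\nu$, $\eta$ or $f$.
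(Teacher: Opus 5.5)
Your proof is correct and follows essentially the same route as the paper: concavity and the Lipschitz constant pass to the infimum over $y\in\mathcal Y$ of the concave, uniformly Lipschitz maps $x\mapsto\overline c(x,y)-f^{c^\circ}(y)$, and boundedness comes from two-sided bounds on $f^{c^\circ}$ obtained from the normalization $\sup_{\mathcal X}f=0$. The only difference is in the upper bound on $f^{c^\circ}$: you go through an oscillation estimate on $f$ and get $3B^\circ$, whereas the paper evaluates at a point $x_0$ with $f(x_0)=0$ to get $f^{c^\circ}(y)\le c^\circ(x_0,y)\le B$ directly, which is shorter and gives a slightly better constant.
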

The proof, along with the corresponding uniform constants, is given in Appendix~\ref{pf:dual-potential-regularity}.
Applying the estimate \eqref{eq:metric entropy conc} to
$\overline{\mathcal F}_{\mu,\nu}$ and invoking the $1$-Lipschitzness of
restriction from $Q_\mathcal{X}$ to $\mathcal{X}$ as in the previous step,
we arrive at
$\log \mathcal N(\varepsilon, \mathcal F^\circ_{\mu,\nu},
\|\cdot\|_\infty) \lesssim_{R,d_x,d_y} \varepsilon^{-d_{x}/2}$ and hence $S_2 \lesssim_{R,d_x,d_y} \psi_{d_x}(n)$.

\emph{Step 3.3.3: Bounding $S_3$.}
It remains to bound the metric entropy of
$\mathcal G^\circ_{\mu,\nu}$. For
$c^\circ_1, c^\circ_2 \in \mathcal C^\circ_{\mu,\nu}$ and
$f_1,f_2 \in \mathcal F^\circ_{\mu,\nu}$, by the generic bound used
in Step~1, we have
$\|f_1^{c^\circ_1} - f_2^{c^\circ_2}\|_\infty
\leq \|f_1-f_2\|_\infty + \|c^\circ_1-c^\circ_2\|_\infty$, which gives
\begin{equation}\label{eq:metric-entropy-tildeG}
\log \mathcal N\bigl(\varepsilon,\mathcal G^\circ_{\mu,\nu},
\|\cdot\|_\infty\bigr)
\le
\log \mathcal N\bigl(\varepsilon/2,\mathcal F^\circ_{\mu,\nu},
\|\cdot\|_\infty\bigr)
+ \log \mathcal N\bigl(\varepsilon/2,\mathcal C^\circ_{\mu,\nu},
\|\cdot\|_\infty\bigr).
\end{equation}
From the previous step, the first term is
$\lesssim_{R,d_x,d_y}\varepsilon^{-d_x/2}$. By the definition of
$\mathcal C^\circ_{\mu,\nu}$, the second term is similarly bounded by two terms
as
\begin{align} \label{eq:metric-entropy-tildeC}
    \log \mathcal N\bigl(\varepsilon/2,\mathcal C^\circ_{\mu,\nu},
\|\cdot\|_\infty\bigr)
\leq \log \mathcal N\bigl(\varepsilon/4,\Phi_{\mu},
\|\cdot\|_\infty\bigr)
+\log \mathcal N\bigl(\varepsilon/4,\mathcal C_{\mu,\nu},
\|\cdot\|_\infty\bigr),
\end{align}
the first of which is $\lesssim_{R,d_x,d_y}\varepsilon^{-d_x/2}$ by
Step~3.3.1. For estimating the metric entropy of $\mathcal C_{\mu,\nu}$,
applying the Cauchy--Schwarz inequality and the bounds
$\|U_x^\mu\|_{L^2(\mathcal W_\mathcal X,\lambda_\mathcal X)} \leq \sqrt{\kappa_{\mathcal{X}}}$ and
$\|V_y^\nu\|_{L^2(\mathcal W_\mathcal Y,\lambda_\mathcal Y)} \leq \sqrt{\kappa_{\mathcal{Y}}}$
(Corollary~\ref{cor:crofton-centered}) to the original form
\eqref{eq:cost-original} of the cost gives that $c_{a,\mu,\nu}$ is
Lipschitz in $a\in \mathcal{A}$ with respect to the
$L^2(\mathcal{W},\lambda)$-norm:
\begin{align}
\bigl\|c_{a,\mu,\nu}-c_{b,\mu,\nu}\bigr\|_{\infty, \mathcal{X}\times \mathcal{Y}}
&\leq
2\|a-b\|_{L^2(\mathcal W,\lambda)}
\sup_{x\in\mathcal X}\|U_x^\mu\|_{L^2(\mathcal W_{\mathcal X},\lambda_{\mathcal X})}
\sup_{y\in\mathcal Y}\|V_y^\nu\|_{L^2(\mathcal W_{\mathcal Y},\lambda_{\mathcal Y})} \notag \\
&\leq 2\sqrt{\kappa_{\mathcal X}\kappa_{\mathcal Y}}\|a-b\|_{L^2(\mathcal{W}, \lambda)}. \label{eq:lipschitz-of-c}
\end{align}
Recall that $\mathcal{A}$ consists of elements of the form
$8\sqrt{\kappa_{\mathcal{X}}\kappa_{\mathcal{Y}}}\int h_{u,v}\,d\eta(u,v)$.
By definition, the Bochner integral $\int h_{u,v}\,d\eta(u,v)$ is
constructed as the $L^2$-limit of integrals of simple functions
approximating $(u,v)\mapsto h_{u,v}$ in $L^1(\mathcal X\times\mathcal Y,|\eta|;L^2(\mathcal W,\lambda))$, and these integrals take
the form $\sum_{j=1}^{m}a_j g_j$ with $\sum_{j=1}^m|a_j|\le2$ and
$g_j\in L^2(\mathcal{W},\lambda)$. As demonstrated in the proof of
Lemma~\ref{lem:convex-hull} below, the elements $g_j$ can be taken from
$\{h_{u,v}\}_{(u,v)\in\mathcal{X}\times\mathcal{Y}}$, so that $\mathcal{A}$
is contained in the scaled $L^2$-closed absolutely convex hull
$16\sqrt{\kappa_\mathcal{X}\kappa_\mathcal{Y}}
\overline{\operatorname{aco}}\{h_{u,v}\}_{(u,v)\in\mathcal{X}\times\mathcal{Y}}$.
Combining this with the $1/2$-H\"older continuity of
$(u,v)\mapsto h_{u,v}$ (Lemma~\ref{lem:holder-smooth}) and a standard
entropy bound for closed convex hulls
(\cite[Theorem~2.6.9]{VanDerVaartWellner1996}), the metric entropy of
$\mathcal{A}$ can be bounded as follows.
\begin{lemma}\label{lem:convex-hull}
For $0<\varepsilon\le1$,
\[\log\mathcal N\bigl(\varepsilon,\mathcal{A},
\|\cdot\|_{L^2(\mathcal{W},\lambda)}\bigr)
\lesssim_{R,d_x,d_y} \varepsilon^{-q_{d_x,d_y}},\qquad
q_{d_x,d_y}=\frac{2(d_x+d_y)}{d_x+d_y+1}.\]
\end{lemma}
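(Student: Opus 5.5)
The plan is to realize $\mathcal A$, up to the factor $16\sqrt{\kappa_{\mathcal X}\kappa_{\mathcal Y}}$, as a subset of the closed absolutely convex hull of the image of the compact parameter set $\mathcal X\times\mathcal Y\subset\R^{d_x+d_y}$ under the map $(u,v)\mapsto h_{u,v}$. We then apply the convex-hull entropy bound \cite[Theorem~2.6.9]{VanDerVaartWellner1996}. That theorem states the following. Let $\mathcal G$ be a class in a Hilbert space with $\mathcal N(\varepsilon\,\diam\mathcal G,\mathcal G,\|\cdot\|)\le C\varepsilon^{-V}$. Then
\[
\log\mathcal N\bigl(\varepsilon\,\diam\mathcal G,\overline{\operatorname{aco}}\,\mathcal G,\|\cdot\|\bigr)\lesssim_{C,V}\varepsilon^{-2V/(V+2)}.
\]
The exponent $q_{d_x,d_y}=2(d_x+d_y)/(d_x+d_y+1)$ is exactly $2V/(V+2)$ with $V=2(d_x+d_y)$. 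So the whole proof reduces to showing that the polynomial covering exponent of $\{h_{u,v}\}$ in $L^2(\mathcal W,\lambda)$ is $2(d_x+d_y)$.

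\emph{Step 1 (hull containment).} Fix $\eta$ with $\|\eta\|_{\TV}\le2$. The map $(u,v)\mapsto h_{u,v}$ is bounded and Borel measurable into $L^2(\mathcal W,\lambda)$, since it is H\"older continuous by Step 2. Because $\mathcal X\times\mathcal Y$ is compact and the map is continuous, we approximate it uniformly by simple functions that take values in $\{h_{u_j,v_j}\}$: partition $\mathcal X\times\mathcal Y$ into finitely many Borel cells $A_j$ of small diameter, pick $(u_j,v_j)\in A_j$, and set $F=\sum_j\1_{A_j}h_{u_j,v_j}$. Then $\int F\,d\eta=\sum_j\eta(A_j)h_{u_j,v_j}$ with $\sum_j|\eta(A_j)|\le2$, and $\|\int h\,d\eta-\int F\,d\eta\|\le2\sup\|h_{u,v}-F(u,v)\|\to0$. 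Hence $\int h\,d\eta\in2\,\overline{\operatorname{aco}}\{h_{u,v}\}$.

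\emph{Step 2 (H\"older continuity, Lemma~\ref{lem:holder-smooth}).} We have
\[
\|h_{u,v}-h_{u',v'}\|^2_{L^2}\le2\int|\1_{H_\xi}(u)-\1_{H_\xi}(u')|^2d\lambda_{\mathcal X}+2\int|\1_{G_\zeta}(v)-\1_{G_\zeta}(v')|^2d\lambda_{\mathcal Y},
\]
since the indicators are bounded by $1$. By \eqref{eq:euclidean-crofton} the right side equals $2\|u-u'\|/\kappa_{\mathcal X}+2\|v-v'\|/\kappa_{\mathcal Y}$. Therefore
\[
\|h_{u,v}-h_{u',v'}\|\lesssim(\|u-u'\|+\|v-v'\|)^{1/2}.
\]
A $\delta^2$-net of $\mathcal X\times\mathcal Y$ has cardinality at most $\lesssim_R\delta^{-2(d_x+d_y)}$ and yields a $C\delta$-net of $\mathcal G=\{h_{u,v}\}$. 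This gives $\mathcal N(\varepsilon,\mathcal G,L^2)\lesssim\varepsilon^{-2(d_x+d_y)}$, and $\diam\mathcal G\le2$.

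\emph{Step 3 (conclusion).} Apply Theorem~2.6.9 with $V=2(d_x+d_y)$. Scaling by $16\sqrt{\kappa_{\mathcal X}\kappa_{\mathcal Y}}$ and rescaling $\varepsilon$ by constants depending on $R,d_x,d_y$ gives $\log\mathcal N(\varepsilon,\mathcal A,L^2)\lesssim\varepsilon^{-q_{d_x,d_y}}$ for $\varepsilon\le1$. The covering numbers of a subset are controlled by those of the hull up to halving $\varepsilon$. The main obstacle is Step 1, namely justifying that the Bochner integral lies in the \emph{closed} hull with the correct constant and that measurability holds. The remaining steps are routine once the Crofton identity is in hand.
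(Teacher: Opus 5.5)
Your proposal is correct and follows essentially the same route as the paper: simple-function approximation puts $\mathcal A$ inside $16\sqrt{\kappa_{\mathcal X}\kappa_{\mathcal Y}}\,\overline{\operatorname{aco}}\{h_{u,v}\}$, the Crofton identity gives $1/2$-H\"older continuity and hence covering exponent $V=2(d_x+d_y)$, and \cite[Theorem~2.6.9]{VanDerVaartWellner1996} finishes. One small correction: that theorem is stated for the closed convex hull (in $L_2(Q)$ with envelope normalization), not for $\overline{\operatorname{aco}}$, so you need the extra step the paper takes. Pass to $\overline{\operatorname{conv}}(\mathcal G\cup(-\mathcal G)\cup\{0\})$, whose generating set has covering numbers at most $2\mathcal N(\varepsilon,\mathcal G)+1$.
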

The proof is given in Appendix~\ref{pf:convex-hull}. It follows by
\eqref{eq:lipschitz-of-c} that
$\log \mathcal N\bigl(\varepsilon/4,\mathcal C_{\mu,\nu},
\|\cdot\|_\infty\bigr) \lesssim_{R,d_x,d_y} \varepsilon^{-q_{d_x,d_y}}$.
Combining this with the bounds for the metric entropies of $\mathcal F^\circ_{\mu,\nu}$ and $\Phi_{\mu}$ in
\eqref{eq:metric-entropy-tildeG} and \eqref{eq:metric-entropy-tildeC}
gives
\[\log \mathcal N\bigl(\varepsilon,\mathcal G^\circ_{\mu,\nu},
\|\cdot\|_\infty\bigr) \lesssim_{R,d_x,d_y} \varepsilon^{-r},
\qquad r := \max\{q_{d_x,d_y},\, d_x/2\}.\]
If $r = d_x/2$, then $S_3 \lesssim_{R,d_x,d_y} \psi_{d_x}(n)$. Otherwise,
$S_3 \lesssim_{R,d_x,d_y} \psi_{2q_{d_x,d_y}}(n)$, but since we always have
$2q_{d_x,d_y} < 4$, the rate is parametric:
$\psi_{2q_{d_x,d_y}}(n) = n^{-1/2} = \psi_{d_x}(n)$. Therefore, in either
case we always have $S_3 \lesssim_{R,d_x,d_y} \psi_{d_x}(n)$.

\subsubsection*{Step 4: Combine}
By Steps 3.3.1--3.3.3, we deduce that $L_2$ has the dominant rate
$\lesssim_{R,d_x,d_y} \psi_{d_x}(n)$. Combining this with $G_1$ and $L_1$, which have the parametric
rate $n^{-1/2}$, we conclude that
\begin{align*}
    \E |D_2^2(\widehat\mu_n, \widehat\nu_n) - D_2^2(\mu, \nu)|
    \leq G_1 + L_1 + L_2
    \lesssim_{R,d_x,d_y} \psi_{d_x}(n).
\end{align*}
\subsection{Proof of lower bound}
Since the parametric lower bound $n^{-1/2}$ for
$d_x\wedge d_y\leq4$ follows from the central limit theorem (for instance,
by considering $\mu=\frac34\delta_0+\frac14\delta_{e_1}$ and
$\nu=\delta_0$, where $e_1$ is the first standard basis vector in
$\R^{d_x}$), it remains
to consider $d_x\wedge d_y>4$. Let $\varrho$ be the uniform distribution
on $B_1^{d_x\wedge d_y}(0)$, and let $\mu$ and $\nu$ be the pushforwards
of $\varrho$ into $\R^{d_x}$ and $\R^{d_y}$ under the canonical embeddings,
respectively. Let $\widehat\varrho_n$ and $\widehat\varrho_n'$ be independent
empirical measures based on $\varrho$, so that $\widehat\mu_n$ and
$\widehat\nu_n$ are their pushforwards under the respective canonical
embeddings. Since the embeddings are isometries, we have
$D_2^2(\mu,\nu)=D_{2,2}^2(\mu,\nu)=0$. Moreover, by isometric invariance,
$D_{2,2}(\widehat\mu_n,\widehat\nu_n)
=D_{2,2}(\widehat\varrho_n,\widehat\varrho_n')$, and equation~(29) in the
proof of the lower bound for $D_{2,2}^2$ in
\cite[Section~5.3.2]{zhang2024gromov} gives
\begin{equation*}
\E D_{2,2}^2(\widehat\mu_n,\widehat\nu_n)
\gtrsim_{d_x,d_y}
\E\lambda_{\min}(\Sigma_{\widehat\varrho_n})
n^{-2/(d_x\wedge d_y)},
\end{equation*}
where $\Sigma_\theta:=\int zz^\top\,d\theta(z)$. The
estimate immediately following equation~(29) shows that
$\E\lambda_{\min}(\Sigma_{\widehat\varrho_n})\geq
\frac12\lambda_{\min}(\Sigma_\varrho)$ for all sufficiently large $n$, where $\lambda_{\min}(\Sigma_\theta)$ is the smallest eigenvalue of $\Sigma_\theta$.
Since $\Sigma_\varrho=I/(d_x\wedge d_y+2)$, we have
$\lambda_{\min}(\Sigma_\varrho)=1/(d_x\wedge d_y+2)$, and therefore
\begin{equation*}
\E D_{2,2}^2(\widehat\mu_n,\widehat\nu_n)
\gtrsim_{d_x,d_y}
n^{-2/(d_x\wedge d_y)}.
\end{equation*}
Since the supports have diameter at most $2$,
\cite[Lemma~9.5(iii)]{sturm2023space} gives
$D_{2,2}\leq4D_2$. Therefore,
\[
\E\left|
D_2^2(\widehat\mu_n,\widehat\nu_n)-D_2^2(\mu,\nu)
\right| =
\E D_2^2(\widehat\mu_n,\widehat\nu_n) \geq
\frac1{16}\E D_{2,2}^2(\widehat\mu_n,\widehat\nu_n) \gtrsim_{d_x,d_y}
n^{-2/(d_x\wedge d_y)}.
\]

%% file: appendix/proofs-crofton-duality.tex
\section{Proofs for Section \ref{sec:crofton-halfspace}}\label{sec:crofton-identities}

\subsection{Proof of Theorem~\ref{theorem:global-crofton}}
\label{pf:global-crofton}

Fix $x,x'\in\R^d$. The case $x=x'$ is trivial, so assume
$x\neq x'$. For each $\theta\in\Sd^{d-1}$ and $t\in\R$, we have
\[
\left(
\one_{\{\theta\cdot x>t\}}
-
\one_{\{\theta\cdot x'>t\}}
\right)^2
=
\one_{\{
\min(\theta\cdot x,\theta\cdot x')
\leq t<
\max(\theta\cdot x,\theta\cdot x')
\}},
\]
and hence
\[
\int_{-\infty}^{\infty}
\left(
\one_{\{\theta\cdot x>t\}}
-
\one_{\{\theta\cdot x'>t\}}
\right)^2
\,dt
=
|\theta\cdot(x-x')|.
\]
Recalling the definition of $\Lambda_d$, it follows that
\[
\int
\left(
\one_{H_\xi}(x)-\one_{H_\xi}(x')
\right)^2
\,d\Lambda_d(\xi)
=
\frac{1}{a_d}
\int
|\theta\cdot(x-x')|
\,d\sigma_{d-1}(\theta).
\]
Choose an orthogonal matrix $Q\in\R^{d\times d}$ such that
$Q(x-x')=\|x-x'\|e_1$, where $e_1\in\R^d$ is the first standard basis
vector. With the change of variables $\omega=Q\theta$ and the
rotational invariance of $\sigma_{d-1}$,
\[
\int
|\theta\cdot(x-x')|
\,d\sigma_{d-1}(\theta)
=
\|x-x'\|
\int
|\omega_1|
\,d\sigma_{d-1}(\omega)
=
a_d\|x-x'\|.
\]
Therefore
\[
\int
\left(
\one_{H_\xi}(x)-\one_{H_\xi}(x')
\right)^2
\,d\Lambda_d(\xi)
=
\|x-x'\|.
\]
\subsection{Proof of Corollary~\ref{cor:global-crofton-centered}}
\label{pf:global-crofton-centered}
By definition,
$\widetilde U_x^\mu(\xi)
=
\int
\bigl(\one_{H_\xi}(x)-\one_{H_\xi}(z)\bigr)
d\mu(z).$
Hence, Jensen's inequality, Fubini and
Theorem~\ref{theorem:global-crofton} together give
\[
\|\widetilde U_x^\mu\|^2_{L^2(\mathcal{W}_d, \Lambda_d)} \leq
\int\!\!\int
\bigl(\one_{H_\xi}(x)-\one_{H_\xi}(z)\bigr)^2
\,d\Lambda_d(\xi)\,d\mu(z) =
\int\|x-z\|\,d\mu(z) <\infty.
\]
Thus $\widetilde U_x^\mu\in L^2(\mathcal W_d,\Lambda_d)$.
\eqref{eq:global-centeredl2embedding} readily follows from \eqref{eq:global-crofton}, and the continuity of $x\mapsto \widetilde U_x^\mu$ immediately follows. Moreover,
\[
\int\|\widetilde U_x^\mu\|^2_{L^2(\mathcal W_d,\Lambda_d)}\,d\mu(x)
\leq
\iint\|x-z\|\,d\mu(x)\,d\mu(z)
<\infty.
\]
Thus the Cauchy--Schwarz inequality implies that $x\mapsto \widetilde U_x^{\mu}$ is Bochner integrable.
Finally, for any $h\in L^2(\mathcal W_d,\Lambda_d)$, Lemma \ref{lem:Bochner} and Fubini give
\begin{align*}
\left\langle
\int \widetilde U_x^\mu\,d\mu(x),h
\right\rangle_{L^2(\mathcal W_d,\Lambda_d)}
&=
\int\langle \widetilde U_x^\mu,h\rangle\,d\mu(x)\\
&=
\iint
\widetilde U_x^\mu(\xi)h(\xi)
\,d\Lambda_d(\xi)\,d\mu(x) =
\int
\left(
\int \widetilde U_x^\mu(\xi)\,d\mu(x)
\right)
h(\xi)\,d\Lambda_d(\xi).
\end{align*}
Since $\int \widetilde U_x^\mu(\xi)\,d\mu(x)=0$ for every $\xi\in\mathcal W_d$,
$\left\langle
\int \widetilde U_x^\mu\,d\mu(x),h
\right\rangle_{L^2(\mathcal W_d,\Lambda_d)}
=
0.$ Therefore
$\int \widetilde U_x^\mu\,d\mu(x)=0$ in
$L^2(\mathcal W_d,\Lambda_d)$.

\subsection{Proof of Lemma \ref{lem:crofton-overlap}}
\label{pf:crofton-overlap}
Fix $x,x'\in \mathcal{X} \subset B^d_R(0)$. For each $\theta \in \Sd^{d-1}$,
both $\theta\cdot x > t$ and $\theta\cdot x'> t$ hold if and only if
$\min(\theta\cdot x, \theta\cdot x') > t$. Since
$\theta\cdot x, \theta\cdot x' \in [-R,R]$,
\[
    \frac{1}{2R}\int_{-R}^R
    \mathbf 1_{\{\theta\cdot x>t\}}
    \mathbf 1_{\{\theta\cdot x'>t\}}
    \, dt
    = \frac{R+\min(\theta\cdot x,\theta\cdot x')}{2R}
    = \frac{1}{2} + \frac{\min(\theta\cdot x,\theta\cdot x')}{2R}.
\]
Using $\min(a,b)=\frac{a+b-|a-b|}{2}$, we obtain
\begin{align*}
\int
    \min(\theta\cdot x,\theta\cdot x')
    \, d\sigma_{d-1}(\theta)
&=
\frac12
\int
    (\theta\cdot x+\theta\cdot x'
    -|\theta\cdot(x-x')|)
    \, d\sigma_{d-1}(\theta) \\
&=
\frac12 (x+x')\cdot
\int
    \theta
    \, d\sigma_{d-1}(\theta)
-
\frac12
\int
    |\theta\cdot(x-x')|
    \, d\sigma_{d-1}(\theta) \\
&=
-\frac12
\int
    |\theta\cdot(x-x')|
    \, d\sigma_{d-1}(\theta),
\end{align*}
where we used $\int\theta \, d\sigma_{d-1}(\theta)=0$. Using the rotational
invariance of $\sigma_{d-1}$ as in the proof of
Theorem~\ref{theorem:global-crofton}, we obtain
\[
    \int
        \min(\theta\cdot x,\theta\cdot x')
    \,d\sigma_{d-1}(\theta)
    =
    -\frac{a_d}{2}\|x-x'\|.
\]
Therefore
\[
    \int
    \mathbf 1_{H_\xi}(x)\mathbf 1_{H_\xi}(x')
    \,d\lambda_{\mathcal{X}}(\xi)
    =
    \frac12-\frac{a_d}{4R}\|x-x'\|
    =
    \frac12-\frac{\|x-x'\|}{2\kappa_{\mathcal{X}}}.
\]

%% file: appendix/proofs-crofton-halfspace.tex
\subsection{Proof of Lemma \ref{lem:linear-distance}}
\label{pf:linear-distance}

\begin{proof}
Expanding the square in the definition of $\|U_x^\mu\|^2_{L^2(\mathcal W_\mathcal X,\lambda_\mathcal X)}$ gives
\begin{align*}
\|U_x^\mu\|^2_{L^2(\mathcal W_\mathcal X,\lambda_\mathcal X)}
&=\kappa_{\mathcal X}\left[\int\1_{H_\xi}(x)d\lambda_{\mathcal X}(\xi)
 -2\int\1_{H_\xi}(x)\mu(H_\xi)d\lambda_{\mathcal X}(\xi)
 +\int \mu(H_\xi)^2d\lambda_{\mathcal X}(\xi)\right].
\end{align*}
The first integral is $\frac12$. Indeed, writing $\xi=(\theta,t)$, for
fixed $\theta\in \Sd^{d-1}$ we have
\[
\int_{-R}^{R}\mathbf 1_{\{\theta\cdot x>t\}}
dt
=R+\theta\cdot x
\]
because $\theta\cdot x\in[-R,R]$. Integrating over all $\theta \in \Sd^{d-1}$ gives
\begin{equation}\label{eq:1/2eq}
\int\mathbf 1_{H_\xi}(x)\,d\lambda_{\mathcal X}(\xi)
=
\frac12+
\frac{1}{2R}\int\theta\cdot x\,d\sigma_{d-1}(\theta)
=
\frac12,
\end{equation}
where the last equality follows by symmetry. Next, since
$\mu(H_\xi)=\int \mathbf 1_{H_\xi}(z)\,d\mu(z),$
Fubini gives
\begin{align*}
\int\mathbf 1_{H_\xi}(x)\mu(H_\xi)\,d\lambda_{\mathcal X}(\xi)
&=
\iint 
\mathbf 1_{H_\xi}(x)\mathbf 1_{H_\xi}(z)
\,d\mu(z)\,d\lambda_{\mathcal X}(\xi) \\
&=
\int 
\left(
\int
\mathbf 1_{H_\xi}(x)\mathbf 1_{H_\xi}(z)
\,d\lambda_{\mathcal X}(\xi)
\right)
d\mu(z) \\
&= \int 
\left(
\frac12-\frac{\|z-x\|}{2\kappa_{\mathcal X}}
\right)
d\mu(z),
\end{align*}
where for the last equality we used \eqref{eq:crofton-overlap}.
Substituting this into the expansion of $\|U_x^\mu\|^2_{L^2(\mathcal W_\mathcal X,\lambda_\mathcal X)}$, we obtain
\begin{align*}
\|U_x^\mu\|^2_{L^2(\mathcal W_\mathcal X,\lambda_\mathcal X)}
&= \kappa_{\mathcal X} \left[
\frac12
-
2\left(
\frac12-\frac{1}{2\kappa_{\mathcal X}}\int \|z-x\|\,d\mu(z)
\right)
+
\int\mu(H_\xi)^2\,d\lambda_{\mathcal X}(\xi)
\right] \\
&=
\int \|z-x\|\,d\mu(z)
+
\kappa_{\mathcal X}
\left(
\int\mu(H_\xi)^2\,d\lambda_{\mathcal X}(\xi)-\frac12
\right).
\end{align*}

Thus
\[
\|U_x^\mu\|^2_{L^2(\mathcal W_\mathcal X,\lambda_\mathcal X)}
=b_\mu+\int \|z-x\|\,d\mu(z),
\qquad
b_\mu
:=
\kappa_{\mathcal X}
\left(
\int\mu(H_\xi)^2\,d\lambda_{\mathcal X}(\xi)-\frac12
\right).
\]

Finally, for $x,x'\in \mathcal X$, the reverse triangle inequality gives 
\[
\left|
\|U_x^\mu\|^2_{L^2(\mathcal W_\mathcal X,\lambda_\mathcal X)}
-
\|U_{x'}^\mu\|^2_{L^2(\mathcal W_\mathcal X,\lambda_\mathcal X)}
\right| \le
\int \bigl|\|z-x\|-\|z-x'\|\bigr|\,d\mu(z)\le \|x-x'\|.
\]
\end{proof}

\subsection{Proof of Lemma \ref{lem:linear-distance-y}}
\label{pf:linear-distance-y}
Substituting the definition
$U_x^\mu(\xi)=\sqrt{\kappa_{\mathcal X}}(\1_{H_\xi}(x)-\mu(H_\xi))$ into that
of $\alpha_u^\mu$ gives
\[
\alpha_u^\mu(x)
=\int \1_{H_\xi}(u)\1_{H_\xi}(x)\,d\lambda_{\mathcal X}(\xi)
-\int \1_{H_\xi}(u)\mu(H_\xi)\,d\lambda_{\mathcal X}(\xi).
\]
Since $|\1_{H_\xi}(x)-\mu(H_\xi)|\le 1$, the identity \eqref{eq:1/2eq} yields
\[
|\alpha_u^\mu(x)|
\le\int \1_{H_\xi}(u)\,d\lambda_{\mathcal X}(\xi)
=\frac12.
\]
Moreover, applying \eqref{eq:crofton-overlap} to the first integral above,
\[
\alpha_u^\mu(x)
=\frac12-\frac{\|u-x\|}{2\kappa_{\mathcal X}}
-\int \1_{H_\xi}(u)\mu(H_\xi)\,d\lambda_{\mathcal X}(\xi)
=A_\mu(u)-\frac{\|u-x\|}{2\kappa_{\mathcal X}},
\]
which is \eqref{eq:linear-alpha}. Finally, for $x,x'\in\mathcal X$, the
reverse triangle inequality gives
\[
|\alpha_u^\mu(x)-\alpha_u^\mu(x')|
=\frac{\bigl|\|u-x\|-\|u-x'\|\bigr|}{2\kappa_{\mathcal X}}
\le\frac{\|x-x'\|}{2\kappa_{\mathcal X}},
\]
so $\alpha_u^\mu$ is $1/(2\kappa_{\mathcal X})$-Lipschitz in $x$.

%% file: appendix/dual.tex
\section{Proofs for Section \ref{sec:duality}}
\label{appendix:proofs-duality}

\subsection{Proof of Theorem~\ref{thm:global-duality}}
\label{pf:global-duality-proof}

\emph{Step 1: Expanding $D_2^2$.}
We directly expand the integrand in $D_2^2$ to decompose it as
$D_2^2(\mu,\nu)
=
B_{\mu,\nu}
+
\inf_{\gamma\in\Pi(\mu,\nu)}C_\gamma$, where
\begin{align*}
B_{\mu,\nu}
&=
\iint \|x-x'\|^2\,d\mu(x)\,d\mu(x')
+
\iint \|y-y'\|^2\,d\nu(y)\,d\nu(y'),\\
C_\gamma
&=
-2\iint
\|x-x'\|\|y-y'\|
\,d\gamma(x,y)\,d\gamma(x',y').
\end{align*}
The term $B_{\mu,\nu}$ depends only on the marginals, while $C_\gamma$
is the coupling-dependent term.

\emph{Step 2: Expanding $C_\gamma$.}
We now rewrite $C_\gamma$ using the squared $L^2$ representation of
Euclidean distance from
Corollary~\ref{cor:global-crofton-centered} and expand it further.
For notational convenience, let
$L^2_x=L^2(\mathcal W_{d_x},\Lambda_{d_x})$ and
$L^2_y=L^2(\mathcal W_{d_y},\Lambda_{d_y})$.
Let $(X,Y)\sim\gamma$, let $(X',Y')$ be an independent copy of $(X,Y)$,
and write $\E$ for expectation with respect to the joint law
$\gamma\otimes\gamma$ of $((X,Y),(X',Y'))$. Recall the global centered
embeddings $\widetilde U^\mu:\R^{d_x}\to L^2_x$ and
$\widetilde V^\nu:\R^{d_y}\to L^2_y$ from \eqref{eq:global-centered-crofton}. From the proof of Corollary~\ref{cor:global-crofton-centered},
$\|\widetilde U_x^\mu\|_{L^2_x}^2
\leq\int\|x-z\|\,d\mu(z)$ and
$\|\widetilde V_y^\nu\|_{L^2_y}^2
\leq\int\|y-w\|\,d\nu(w)$. Hence
$\int\|\widetilde U_x^\mu\|_{L^2_x}^4\,d\mu(x)<\infty$ and
$\int\|\widetilde V_y^\nu\|_{L^2_y}^4\,d\nu(y)<\infty$, and substituting
$\|x-x'\|=\|\widetilde U_x^\mu-\widetilde U_{x'}^\mu\|_{L^2_x}^2$ and
$\|y-y'\|=\|\widetilde V_y^\nu-\widetilde V_{y'}^\nu\|_{L^2_y}^2$ into $C_\gamma$ and
expanding both squared norms gives
\begin{align*}
-2\iint
&\|x-x'\|\|y-y'\|
\,d\gamma(x,y)\,d\gamma(x',y')\\
&=
-2\E\left[
\|\widetilde U_X^\mu-\widetilde U_{X'}^\mu\|_{L^2_x}^2
\|\widetilde V_Y^\nu-\widetilde V_{Y'}^\nu\|_{L^2_y}^2
\right]\\
&=
-2\E\left[
\left(
\|\widetilde U_X^\mu\|_{L^2_x}^2
+
\|\widetilde U_{X'}^\mu\|_{L^2_x}^2
-
2\langle \widetilde U_X^\mu,\widetilde U_{X'}^\mu\rangle_{L^2_x}
\right)
\left(
\|\widetilde V_Y^\nu\|_{L^2_y}^2
+
\|\widetilde V_{Y'}^\nu\|_{L^2_y}^2
-
2\langle \widetilde V_Y^\nu,\widetilde V_{Y'}^\nu\rangle_{L^2_y}
\right)
\right]\\
&=
-4\E_{X\sim\mu}\left[\|\widetilde U_X^\mu\|_{L^2_x}^2\right]
\E_{Y\sim\nu}\left[\|\widetilde V_Y^\nu\|_{L^2_y}^2\right]
-4\E\left[
\|\widetilde U_X^\mu\|_{L^2_x}^2
\|\widetilde V_Y^\nu\|_{L^2_y}^2
\right]\\
&\qquad
-8\E\left[
\langle \widetilde U_X^\mu,\widetilde U_{X'}^\mu\rangle_{L^2_x}
\langle \widetilde V_Y^\nu,\widetilde V_{Y'}^\nu\rangle_{L^2_y}
\right],
\end{align*}
where all terms obtained from the expansion are finite by the above bounds on $\|\widetilde{U}_x^\mu\|^2_{L_x^2}$ and $\|\widetilde{V}_y^\nu\|^2_{L_y^2}$. Furthermore, the cross terms containing exactly one inner product vanish by
the tower law since $\E \widetilde U_X^\mu=\E \widetilde V_Y^\nu=0$. The first term depends
only on the marginals and will be absorbed into $B_{\mu,\nu}$ in Step~4,
and the second is linear in $\gamma$. Only the third term is quadratic
in $\gamma$, and we treat it next.

\emph{Step 3: Linearizing the quadratic term.}
For each $(x,y)\in\R^{d_x}\times\R^{d_y}$, define the tensor
$\widetilde U_x^\mu\otimes \widetilde V_y^\nu\in L^2(\widetilde{\mathcal W},\widetilde\Lambda)$ by
$(\widetilde U_x^\mu\otimes \widetilde V_y^\nu)(\xi,\zeta)
=
\widetilde U_x^\mu(\xi)\widetilde V_y^\nu(\zeta)$.
The mapping $(x,y)\mapsto \widetilde U_x^\mu\otimes \widetilde V_y^\nu$ is continuous since
\begin{equation} \label{eq:tensor-bound}
\begin{aligned}
\|\widetilde U_x^\mu\otimes \widetilde V_y^\nu
-\widetilde U_{x'}^\mu\otimes \widetilde V_{y'}^\nu\|_{L^2(\widetilde{\mathcal W},\widetilde\Lambda)}
&\leq
\|\widetilde U_x^\mu-\widetilde U_{x'}^\mu\|_{L^2_x}\|\widetilde V_y^\nu\|_{L^2_y}+
\|\widetilde U_{x'}^\mu\|_{L^2_x}
\|\widetilde V_y^\nu-\widetilde V_{y'}^\nu\|_{L^2_y},
\end{aligned}
\end{equation}
whose right-hand side converges to $0$ as $(x',y')\to(x,y)$ by the
squared $L^2$ representation. Moreover, by the Cauchy--Schwarz inequality,
\begin{align*}
\int
\|\widetilde U_x^\mu\otimes \widetilde V_y^\nu\|_{L^2(\widetilde{\mathcal W},\widetilde\Lambda)}
\,d\gamma(x,y)
&=
\int
\|\widetilde U_x^\mu\|_{L^2_x}\|\widetilde V_y^\nu\|_{L^2_y}
\,d\gamma(x,y)\\
&\leq
\sqrt{m_\mu m_\nu}
<\infty.
\end{align*}
Therefore the Bochner integral $\widetilde M_\gamma
=\int
\widetilde U_x^\mu\otimes \widetilde V_y^\nu\,d\gamma(x,y)$ is well-defined. By Lemma~\ref{lem:Bochner} and Fubini,
\begin{align*}
-8\E\left[
\langle \widetilde U_X^\mu,\widetilde U_{X'}^\mu\rangle_{L^2_x}
\langle \widetilde V_Y^\nu,\widetilde V_{Y'}^\nu\rangle_{L^2_y}
\right]
&=
-8\iint
\langle \widetilde U_x^\mu,\widetilde U_{x'}^\mu\rangle_{L^2_x}
\langle \widetilde V_y^\nu,\widetilde V_{y'}^\nu\rangle_{L^2_y}
\,d\gamma(x,y)\,d\gamma(x',y')\\
&=
-8\left\langle
\int \widetilde U_x^\mu\otimes \widetilde V_y^\nu\,d\gamma(x,y),
\int \widetilde U_{x'}^\mu\otimes \widetilde V_{y'}^\nu\,d\gamma(x',y')
\right\rangle_{L^2(\widetilde{\mathcal W},\widetilde\Lambda)}\\
&=
-8\|\widetilde M_\gamma\|_{L^2(\widetilde{\mathcal W},\widetilde\Lambda)}^2.
\end{align*}
This squared norm can be linearized by completing the square:
\begin{equation}\label{eq:frencherel}
-8\|\widetilde M_\gamma\|_{L^2(\widetilde{\mathcal W},\widetilde\Lambda)}^2
=
\inf_{a\in L^2(\widetilde{\mathcal W},\widetilde\Lambda)}
\left\{
\frac18\|a\|_{L^2(\widetilde{\mathcal W},\widetilde\Lambda)}^2
-
2\langle a,\widetilde M_\gamma\rangle_{L^2(\widetilde{\mathcal W},\widetilde\Lambda)}
\right\},
\end{equation}
with the unique minimizer $a^*=8\widetilde M_\gamma$. 
Since $a^*\in\mathcal B_{\mu,\nu}$, the infimum in \eqref{eq:frencherel} may be
restricted to $\mathcal B_{\mu,\nu}$.

\emph{Step 4: Combine.}
Since $\E \widetilde U_X^\mu=0$ and $\E \widetilde V_Y^\nu=0$, we have
\begin{align*}
\iint\|x-x'\|\,d\mu(x)\,d\mu(x')
&=
2\E_{X\sim\mu}\left[\|\widetilde U_X^\mu\|_{L^2_x}^2\right], \quad
\iint\|y-y'\|\,d\nu(y)\,d\nu(y')
&=
2\E_{Y\sim\nu}\left[\|\widetilde V_Y^\nu\|_{L^2_y}^2\right].
\end{align*}
Hence, the marginal terms combine as
$B_{\mu,\nu}
-
4\E_{X\sim\mu}\|\widetilde U_X^\mu\|_{L^2_x}^2
\E_{Y\sim\nu}\|\widetilde V_Y^\nu\|_{L^2_y}^2
=
E_{\mu,\nu}$.

Using Lemma~\ref{lem:Bochner} and Fubini,
\begin{equation}\label{eq:global-pairing}
\langle a,\widetilde M_\gamma\rangle_{L^2(\widetilde{\mathcal W},\widetilde\Lambda)}
=
\int_{\R^{d_x}\times\R^{d_y}}
\int_{\widetilde{\mathcal W}}
a(\xi,\zeta)\widetilde U_x^\mu(\xi)\widetilde V_y^\nu(\zeta)
\,d\widetilde\Lambda(\xi,\zeta)\,d\gamma(x,y).
\end{equation}
The fourth-moment bounds from Step~2 and Cauchy--Schwarz imply that
$\widetilde c_{a,\mu,\nu}$ is integrable with respect to every
$\gamma\in\Pi(\mu,\nu)$. Substituting \eqref{eq:global-pairing} into
\eqref{eq:frencherel}, restricting the infimum to
$\mathcal B_{\mu,\nu}$, and collecting all terms gives
\begin{align*}
D_2^2(\mu,\nu)
&=
E_{\mu,\nu}
+
\inf_{\gamma\in\Pi(\mu,\nu)}
\inf_{a\in\mathcal B_{\mu,\nu}}
\left\{
\frac18\|a\|_{L^2(\widetilde{\mathcal W},\widetilde\Lambda)}^2
+
\int_{\R^{d_x}\times\R^{d_y}}
\widetilde c_{a,\mu,\nu}(x,y)\,d\gamma(x,y)
\right\}\\
&=
E_{\mu,\nu}
+
\inf_{a\in\mathcal B_{\mu,\nu}}
\left\{
\frac18\|a\|_{L^2(\widetilde{\mathcal W},\widetilde\Lambda)}^2
+
T_{\widetilde c_{a,\mu,\nu}}(\mu,\nu)
\right\}.
\end{align*}
\subsection{Proof of Corollary~\ref{cor:compact-duality}}
\label{pf:compact-duality-proof}
Since Steps~1--3 follow verbatim with the compact centered embeddings $U^\mu$ and $V^\nu$ from
\eqref{eq:centered-crofton} in place of $\widetilde U^\mu$ and $\widetilde V^\nu$, it suffices to verify that $a^*=8M_\gamma$ belongs to
$\mathcal A$.

We first verify that $\mathcal A$ is well-defined, as the following lemma gives
the continuity of $(u,v)\mapsto h_{u,v}$ (boundedness follows from
$\|h_{u,v}\|_{L^2(\mathcal W,\lambda)}\leq1$).

\begin{lemma}\label{lem:holder-smooth}
For the mapping $(u,v)\mapsto h_{u,v}$ given by
$h_{u,v}(\xi,\zeta)
=
\mathbf 1_{H_\xi}(u)\mathbf 1_{G_\zeta}(v)$,
\begin{equation*}
\|h_{u,v}-h_{u',v'}\|_{L^2(\mathcal W,\lambda)}^2
\leq
\frac{\|u-u'\|}{\kappa_{\mathcal X}}
+
\frac{\|v-v'\|}{\kappa_{\mathcal Y}}.
\end{equation*}
Consequently, $h$ is $1/2$-H\"older with respect to the product metric
$d((u,v),(u',v'))=\|u-u'\|+\|v-v'\|$ on
$\mathcal X\times\mathcal Y$.
\end{lemma}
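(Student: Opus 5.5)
The plan is to bound the squared difference by two single-factor terms and then apply the Crofton formula \eqref{eq:euclidean-crofton} to each factor. Write
\[
h_{u,v}-h_{u',v'}
=
\bigl(\1_{H_\xi}(u)-\1_{H_\xi}(u')\bigr)\1_{G_\zeta}(v)
+\1_{H_\xi}(u')\bigl(\1_{G_\zeta}(v)-\1_{G_\zeta}(v')\bigr).
\]
At each $(\xi,\zeta)$ the two summands take values in $\{-1,0,1\}$. We use the pointwise inequality $(A+B)^2\le A^2+B^2$, which holds whenever $AB\ge 0$. Since $(A+B)^2\le 2A^2+2B^2$ would lose a factor $2$, we first rule out the case $AB=-1$.

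If $AB=-1$, then $u$ and $u'$ are separated by $H_\xi$ and $v,v'$ are separated by $G_\zeta$. There are two sign patterns.
\begin{itemize}
\item $\1_{H_\xi}(u)=1$, $\1_{H_\xi}(u')=0$, $\1_{G_\zeta}(v)=0$, $\1_{G_\zeta}(v')=1$. Then $A=(1-0)\cdot 0=0$, a contradiction.
\item $\1_{H_\xi}(u)=0$, $\1_{H_\xi}(u')=1$, $\1_{G_\zeta}(v)=1$, $\1_{G_\zeta}(v')=0$. Then $A=-1$ and $B=1\cdot 1=1$, so $AB=-1$ genuinely occurs.
\end{itemize}
The decomposition therefore does not force $AB\ge 0$, and we switch to a direct pointwise argument instead.

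Let $p=\1_{H_\xi}(u)$, $p'=\1_{H_\xi}(u')$, $q=\1_{G_\zeta}(v)$, $q'=\1_{G_\zeta}(v')$, all in $\{0,1\}$. Then $(pq-p'q')^2=|pq-p'q'|$, since the difference lies in $\{-1,0,1\}$. We claim
\[
|pq-p'q'|\le |p-p'|+|q-q'|.
\]
If the left side is $1$, then exactly one of $pq$, $p'q'$ equals $1$; say $pq=1$ and $p'q'=0$. This gives $p=q=1$ and $p'q'=0$, so $p'=0$ or $q'=0$, and hence the right side is at least $1$. Using $|p-p'|=(p-p')^2$, this yields the pointwise bound
\[
(h_{u,v}-h_{u',v'})^2(\xi,\zeta)\le(\1_{H_\xi}(u)-\1_{H_\xi}(u'))^2+(\1_{G_\zeta}(v)-\1_{G_\zeta}(v'))^2 .
\]

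Next integrate this bound against $\lambda=\lambda_{\mathcal X}\otimes\lambda_{\mathcal Y}$. The first term depends only on $\xi$ and the second only on $\zeta$. Since both $\lambda_{\mathcal X}$ and $\lambda_{\mathcal Y}$ are probability measures, integrating out the other variable contributes a factor of $1$. Corollary~\ref{cor:crofton-centered}, applied via \eqref{eq:euclidean-crofton}, evaluates each remaining integral as $\|u-u'\|/\kappa_{\mathcal X}$ and $\|v-v'\|/\kappa_{\mathcal Y}$ respectively. This gives the stated inequality.

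For the H\"older consequence, set $\kappa_{\min}=\kappa_{\mathcal X}\wedge\kappa_{\mathcal Y}$. The inequality then gives
\[
\|h_{u,v}-h_{u',v'}\|_{L^2(\mathcal W,\lambda)}\le \kappa_{\min}^{-1/2}\,d\bigl((u,v),(u',v')\bigr)^{1/2},
\]
so $h$ is $1/2$-H\"older with constant $\kappa_{\min}^{-1/2}$.

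The only delicate point is avoiding the factor $2$ that the crude inequality $(A+B)^2\le2A^2+2B^2$ would introduce. The Boolean inequality $|pq-p'q'|\le|p-p'|+|q-q'|$ handles this cleanly.
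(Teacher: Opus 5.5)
Your final argument is correct and is essentially the paper's proof: the pointwise Boolean bound $(pq-p'q')^2=|pq-p'q'|\le|p-p'|+|q-q'|=(p-p')^2+(q-q')^2$, followed by integrating each term against the probability measures $\lambda_{\mathcal X}$ and $\lambda_{\mathcal Y}$ via \eqref{eq:euclidean-crofton}. The detour you discarded contains a sign slip, since $(A+B)^2\le A^2+B^2$ holds when $AB\le 0$, not when $AB\ge 0$. In your decomposition $AB\le 0$ always holds, so that route would also have worked. This does not affect your final proof.
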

\begin{proof}
    For $(u,v),(u',v')\in \mathcal X\times \mathcal Y$ and $(\xi,\zeta)\in \mathcal W_{\mathcal X}\times \mathcal W_{\mathcal Y}$, we have
\begin{align*}
\bigl|h_{u,v}(\xi,\zeta)-h_{u',v'}(\xi,\zeta)\bigr|^2
&=
\bigl|
\mathbf 1_{H_\xi}(u)\mathbf 1_{G_\zeta}(v)
-
\mathbf 1_{H_\xi}(u')\mathbf 1_{G_\zeta}(v')
\bigr|^2 \\
&\leq
\mathbf 1\!\left\{
\mathbf 1_{H_\xi}(u)\mathbf 1_{G_\zeta}(v)
\neq
\mathbf 1_{H_\xi}(u')\mathbf 1_{G_\zeta}(v')
\right\} \\
&\leq
\mathbf 1\!\left\{
\mathbf 1_{H_\xi}(u)\neq \mathbf 1_{H_\xi}(u')
\right\}
+
\mathbf 1\!\left\{
\mathbf 1_{G_\zeta}(v)\neq \mathbf 1_{G_\zeta}(v')
\right\} \\
&=
\bigl|\mathbf 1_{H_\xi}(u)-\mathbf 1_{H_\xi}(u')\bigr|
+
\bigl|\mathbf 1_{G_\zeta}(v)-\mathbf 1_{G_\zeta}(v')\bigr| \\
&= (\mathbf 1_{H_\xi}(u)-\mathbf 1_{H_\xi}(u'))^2
+
\bigl(\mathbf 1_{G_\zeta}(v)-\mathbf 1_{G_\zeta}(v')\bigr)^2.
\end{align*}
Integrating over $(\xi, \zeta) \in \mathcal W_{\mathcal X} \times \mathcal W_{\mathcal Y}$ with respect to $\lambda$ and using Corollary~\ref{thm:euclidean-crofton} gives
\begin{equation*}
\|h_{u,v}-h_{u',v'}\|_{L^2(\mathcal{W},\lambda)}^2
\le \frac{\|u-u'\|}{\kappa_{\mathcal X}}+\frac{\|v-v'\|}{\kappa_{\mathcal Y}}.
\end{equation*}
Taking square roots, we deduce that the map $(u,v)\mapsto h_{u,v}$ is $1/2$-H\"older. 
\end{proof}
To verify that
$a^*\in\mathcal A$, we use the following representation of $M_\gamma$.

\begin{lemma}\label{lem:Mgamma representation}
It holds that
\begin{equation}\label{eq:pointwise}
M_\gamma
=
\sqrt{\kappa_{\mathcal X}\kappa_{\mathcal Y}}
\int_{\mathcal X\times\mathcal Y}
h_{x,y}\,d(\gamma-\mu\otimes\nu)(x,y),
\end{equation}
where the integral is interpreted as the Bochner integral of
$(x,y)\mapsto h_{x,y}$.
\end{lemma}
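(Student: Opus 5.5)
Since both sides of \eqref{eq:pointwise} are elements of $L^2(\mathcal W,\lambda)$, I will identify them by pairing each with an arbitrary test function $g\in L^2(\mathcal W,\lambda)$ and showing the two pairings agree. On the left, recall $M_\gamma=\int U_x^\mu\otimes V_y^\nu\,d\gamma(x,y)$. This Bochner integral is well defined: the map $(x,y)\mapsto U_x^\mu\otimes V_y^\nu$ is continuous, by the analogue of \eqref{eq:tensor-bound} together with \eqref{eq:centeredl2embedding}, and it is bounded in norm by $\sqrt{\kappa_{\mathcal X}\kappa_{\mathcal Y}}$. Applying Lemma~\ref{lem:Bochner} and then Fubini (all integrands are bounded and the measures are finite) gives
\[
\langle M_\gamma,g\rangle=\int_{\mathcal W} g(\xi,\zeta)\left(\int U_x^\mu(\xi)V_y^\nu(\zeta)\,d\gamma(x,y)\right)d\lambda(\xi,\zeta).
\]
On the right, Lemma~\ref{lem:holder-smooth} shows $(u,v)\mapsto h_{u,v}$ is continuous and bounded by $1$, so it is Bochner integrable with respect to the finite signed measure $\gamma-\mu\otimes\nu$. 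The same two steps then give
\[
\Bigl\langle \int h_{x,y}\,d(\gamma-\mu\otimes\nu),g\Bigr\rangle=\int g(\xi,\zeta)\bigl(\gamma(H_\xi\times G_\zeta)-\mu(H_\xi)\nu(G_\zeta)\bigr)\,d\lambda(\xi,\zeta).
\]

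It therefore suffices to prove the pointwise identity, for every $(\xi,\zeta)$,
\[
\int U_x^\mu(\xi)V_y^\nu(\zeta)\,d\gamma(x,y)=\sqrt{\kappa_{\mathcal X}\kappa_{\mathcal Y}}\,\bigl(\gamma(H_\xi\times G_\zeta)-\mu(H_\xi)\nu(G_\zeta)\bigr).
\]
To see this, expand $(\1_{H_\xi}(x)-\mu(H_\xi))(\1_{G_\zeta}(y)-\nu(G_\zeta))$ into four terms. The product $\1_{H_\xi}(x)\1_{G_\zeta}(y)$ integrates under $\gamma$ to $\gamma(H_\xi\times G_\zeta)$. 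The two cross terms each integrate to $-\mu(H_\xi)\nu(G_\zeta)$, because $\gamma$ has marginals $\mu$ and $\nu$. The constant term contributes $+\mu(H_\xi)\nu(G_\zeta)$. Summing, the coefficient of $\mu(H_\xi)\nu(G_\zeta)$ is $-1$, which is exactly the covariance form above. Since $g$ was arbitrary, \eqref{eq:pointwise} follows.

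The only points needing care are measure-theoretic: joint measurability of $(x,y,\xi,\zeta)\mapsto \1_{H_\xi}(x)\1_{G_\zeta}(y)$ (it is the indicator of an open or Borel set, so it is measurable) and the handling of the signed measure through its Jordan decomposition. Neither step is a real obstacle.

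As a consequence, $a^*=8M_\gamma=8\sqrt{\kappa_{\mathcal X}\kappa_{\mathcal Y}}\int h\,d\eta$ with $\eta=\gamma-\mu\otimes\nu$. Since $\|\eta\|_{\TV}\le \gamma(\mathcal X\times\mathcal Y)+(\mu\otimes\nu)(\mathcal X\times\mathcal Y)=2$, this places $a^*$ in $\mathcal A$, as required for Corollary~\ref{cor:compact-duality}.
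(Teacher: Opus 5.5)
Your proposal is correct and follows essentially the same route as the paper. Like the paper, you identify both Bochner integrals with their pointwise representatives via Lemma~\ref{lem:Bochner} and Fubini; you make this explicit by pairing with a test function $g$, as the paper does in the proof of Corollary~\ref{cor:global-crofton-centered}. You then verify the pointwise identity $\gamma(H_\xi\times G_\zeta)-\mu(H_\xi)\nu(G_\zeta)$ from the marginal constraints, and your final bound $\|\gamma-\mu\otimes\nu\|_{\TV}\le 2$ is exactly how the paper places $a^*$ in $\mathcal A$.
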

\begin{proof}
    As in the proof of Corollary~\ref{cor:global-crofton-centered},
Lemma~\ref{lem:Bochner} and Fubini identify each Bochner integral with
the $L^2$-equivalence class of its pointwise integral. Since $\gamma$ has
marginals $\mu$ and $\nu$, for each $(\xi,\zeta)\in\mathcal W$ the
pointwise representative of $M_\gamma$ satisfies
\begin{align*}
M_\gamma(\xi,\zeta)
&=
\int_{\mathcal X\times\mathcal Y}
U_x^\mu(\xi)V_y^\nu(\zeta)\,d\gamma(x,y)\\
&=
\sqrt{\kappa_\mathcal X\kappa_\mathcal Y}
\int_{\mathcal X\times\mathcal Y}
\left(\one_{H_\xi}(x)-\mu(H_\xi)\right)
\left(\one_{G_\zeta}(y)-\nu(G_\zeta)\right)
\,d\gamma(x,y)\\
&=
\sqrt{\kappa_\mathcal X\kappa_\mathcal Y}
\left(
\gamma(H_\xi\times G_\zeta)
-\mu(H_\xi)\nu(G_\zeta)
\right).
\end{align*}
On the other hand, the pointwise representative of the right-hand side of
\eqref{eq:pointwise} is
\begin{align*}
&\sqrt{\kappa_\mathcal X\kappa_\mathcal Y}
\int_{\mathcal X\times\mathcal Y}
h_{x,y}(\xi,\zeta)\,d(\gamma-\mu\otimes\nu)(x,y)\\
&\qquad=
\sqrt{\kappa_\mathcal X\kappa_\mathcal Y}
\left(
\int_{\mathcal X\times\mathcal Y}
\one_{H_\xi}(x)\one_{G_\zeta}(y)\,d\gamma(x,y)
-
\int_{\mathcal X\times\mathcal Y}
\one_{H_\xi}(x)\one_{G_\zeta}(y)\,d(\mu\otimes\nu)(x,y)
\right)\\
&\qquad=
\sqrt{\kappa_\mathcal X\kappa_\mathcal Y}
\left(
\gamma(H_\xi\times G_\zeta)
-\mu(H_\xi)\nu(G_\zeta)
\right).
\end{align*}
Therefore they coincide in $L^2(\mathcal W,\lambda)$.
\end{proof}
Since
$\|\gamma-\mu\otimes\nu\|_{\TV}\leq2$ by the triangle inequality,
the definition of $\mathcal A$ gives $a^*=8M_\gamma\in\mathcal A$.
Hence, the infimum in \eqref{eq:frencherel} may be
restricted to $\mathcal A$ instead, and repeating Step~4 yields \eqref{eq:21dual}.

%% file: appendix/proofs-duality-cost.tex
\section{Proofs for Section~\ref{sec:cost-structure}}\label{sec:duality-cost-proofs}

\subsection{Proof of Proposition \ref{lem:signed-mixture}}
\label{pf:signed-mixture}
Substituting the identities in Lemmas~\ref{lem:linear-distance} and~\ref{lem:linear-distance-y} into \eqref{eq:cost-alternate} gives
\begin{align*}
c_{a_\eta,\mu,\nu}(x,y)
&=
-4\|V_y^\nu\|^2_{L^2(\mathcal W_\mathcal Y,\lambda_\mathcal Y)}
\left(
b_\mu+\int \|z-x\|\,d\mu(z)
\right)  \\
&\qquad \qquad
-
16\kappa_{\mathcal X}\kappa_{\mathcal Y}
\int
\left(
A_\mu(u)-\frac{\|u-x\|}{2\kappa_{\mathcal X}}
\right)
\beta_v^\nu(y)\,d\eta(u,v) \\
&=
-4\|V_y^\nu\|^2_{L^2(\mathcal W_\mathcal Y,\lambda_\mathcal Y)}b_\mu
-
16\kappa_{\mathcal X}\kappa_{\mathcal Y}
\int A_\mu(u)\beta_v^\nu(y)\,d\eta(u,v)
\\
&\qquad \qquad
-4\|V_y^\nu\|^2_{L^2(\mathcal W_\mathcal Y,\lambda_\mathcal Y)}
\int \|z-x\|\,d\mu(z)
+
8\kappa_{\mathcal Y}
\int\|u-x\|\beta_v^\nu(y)\,d\eta(u,v).
\end{align*}
The first two terms are independent of $x$, so for each fixed $y\in \mathcal Y$, set
\[
M_\eta(y)
:=
-4\|V_y^\nu\|^2_{L^2(\mathcal W_\mathcal Y,\lambda_\mathcal Y)}b_\mu
-
16\kappa_{\mathcal X}\kappa_{\mathcal Y}
\int A_\mu(u)\beta_v^\nu(y)\,d\eta(u,v).
\]
Define a signed Borel measure
$\rho_{\eta,y} := (\pi_1)_\sharp\bigl(\beta_v^\nu(y)\eta(du,dv)\bigr)$
on $\mathcal X$, which is finite since $|\beta_v^\nu(y)|\le \frac12$ by Lemma \ref{lem:linear-distance-y}. Then for every Borel set $E\subset \mathcal X$,
\[
\rho_{\eta,y}(E)
=
\left[(\pi_1)_\sharp\bigl(\beta_v^\nu(y)\eta\bigr)\right](E)
=
\bigl(\beta_v^\nu(y)\eta\bigr)\bigl(\pi_1^{-1}(E)\bigr)
=
\int_{E\times \mathcal Y} \beta_v^\nu(y)\,d\eta(u,v).
\]
Then
$\int\|u-x\|\beta_v^\nu(y)\,d\eta(u,v) = \int \|z-x\|\,d\rho_{\eta,y}(z).$
Therefore
\[
c_{a_\eta,\mu,\nu}(x,y)
=
M_\eta(y)
+
\int \|z-x\|\,d\theta_{\eta,y}(z), \qquad 
\theta_{\eta,y}
:=
-4\|V_y^\nu\|^2_{L^2(\mathcal W_\mathcal Y,\lambda_\mathcal Y)}\mu
+
8\kappa_{\mathcal Y}\rho_{\eta,y}.
\]
In other words,
\[
\theta_{\eta,y}
=
-4\|V_y^\nu\|^2_{L^2(\mathcal W_\mathcal Y,\lambda_\mathcal Y)}\mu
+
8\kappa_{\mathcal Y}(\pi_1)_\sharp\bigl(\beta_v^\nu(y)\eta(du,dv)\bigr).
\]
This proves \eqref{eq:signed-mixture}. It remains to prove $|\theta_{\eta,y}|\le \Lambda_{\mu,\eta}$. We have $|\beta_v^\nu(y)|\le \frac12$, and as measures on $\mathcal X$,
\[
|\rho_{\eta,y}|
\le
(\pi_1)_\sharp\bigl(|\beta_v^\nu(y)|\,|\eta|\bigr)
\le
\frac12(\pi_1)_\sharp|\eta|.
\]
Hence
\begin{align*}
|\theta_{\eta,y}|
&\le
4\|V_y^\nu\|^2_{L^2(\mathcal W_\mathcal Y,\lambda_\mathcal Y)}\mu
+8\kappa_{\mathcal Y}|\rho_{\eta,y}|
\leq 
4\kappa_{\mathcal Y}\mu
+4\kappa_{\mathcal Y}(\pi_1)_\sharp|\eta|.
\end{align*}
Thus $|\theta_{\eta,y}|\leq \Lambda_{\mu,\eta}$, where $\Lambda_{\mu,\eta}
:=
4\kappa_{\mathcal Y}\mu
+
4\kappa_{\mathcal Y}(\pi_1)_\sharp|\eta|$, and 
\[
\Lambda_{\mu,\eta}(\mathcal X)
=
4\kappa_{\mathcal Y}\mu(\mathcal X)
+
4\kappa_{\mathcal Y}|\eta|(\mathcal X\times \mathcal Y)
=
4\kappa_{\mathcal Y}
+
4\kappa_{\mathcal Y}\|\eta\|_{\TV}.
\]
Since $\|\eta\|_{\TV}\le 2$, we conclude that $\Lambda_{\mu,\eta}(\mathcal X) \leq 12\kappa_{\mathcal Y}$.

%% file: appendix/proofs-sample-complexity.tex
\section{Proofs for Section~\ref{pf:sample-theorem}}\label{sec:sample-proofs}
\subsection{Proof of Lemma \ref{lem:v-statistics}}\label{pf:v-statistics}

Write $\widehat\mu_n=\frac1n\sum_{i=1}^n\delta_{X_i}$ and
$\widehat\nu_n=\frac1n\sum_{i=1}^n\delta_{Y_i}$. For simplicity, write
$E_{\mu,\nu}=Q_\mu+Q_\nu-D_\mu D_\nu$, where
\[
Q_\mu:=\iint\|x-x'\|^2\,d\mu(x)d\mu(x'),
\qquad
D_\mu:=\iint\|x-x'\|\,d\mu(x)d\mu(x'),
\]
and analogously for $Q_\nu$ and $D_\nu$. Since
$Q_{\widehat\mu_n}=\frac1{n^2}\sum_{i,j=1}^n \|X_i-X_j\|^2$
and $D_{\widehat\mu_n}=\frac1{n^2}\sum_{i,j=1}^n \|X_i-X_j\|$, they are
$V$-statistics with kernels $h_Q(x,x'):=\|x-x'\|^2$ and
$h_D(x,x'):=\|x-x'\|$. Since $\mathcal X\subset B^{d_x}_{R}(0)$, we have
$\|h_Q\|_\infty\le 4R^2$ and $\|h_D\|_\infty\le 2R$.

For a kernel $h$, let $V_n^h:=n^{-2}\sum_{i,j=1}^n h(Z_i,Z_j)$, where
$Z_1,\ldots,Z_n$ are i.i.d.\ with law $\rho$, and suppose
$\|h\|_\infty\le B$. Changing one coordinate changes at most $2n-1$
summands, and hence changes $V_n^h$ by at most $4B/n$. By McDiarmid's
inequality \cite{mcdiarmid1989bounded},
$\E|V_n^h-\E V_n^h|\lesssim Bn^{-1/2}$. Also,
\[
\E V_n^h
=
\frac{n(n-1)}{n^2}\iint h\,d\rho d\rho
+
\frac1n\int h(z,z)\,d\rho(z),
\]
so $|\E V_n^h-\iint h\,d\rho d\rho|\le 2B/n$. Therefore
$\E\left|V_n^h-\iint h\,d\rho d\rho\right|\lesssim Bn^{-1/2}$.
Applying this to $h_Q$ and $h_D$ gives
$\E|Q_{\widehat\mu_n}-Q_\mu|\lesssim R^2n^{-1/2}$ and
$\E|D_{\widehat\mu_n}-D_\mu|\lesssim Rn^{-1/2}$, and analogously
$\E|Q_{\widehat\nu_n}-Q_\nu|\lesssim R^2n^{-1/2}$ and
$\E|D_{\widehat\nu_n}-D_\nu|\lesssim Rn^{-1/2}$.

Finally,
\[
|E_{\widehat\mu_n,\widehat\nu_n}-E_{\mu,\nu}|
\le
|Q_{\widehat\mu_n}-Q_\mu|
+
|Q_{\widehat\nu_n}-Q_\nu|
+
|D_{\widehat\mu_n}D_{\widehat\nu_n}-D_\mu D_\nu|.
\]
Since $0\le D_{\widehat\mu_n},D_\mu\le 2R$ and
$0\le D_{\widehat\nu_n},D_\nu\le 2R$,
\[
|D_{\widehat\mu_n}D_{\widehat\nu_n}-D_\mu D_\nu|
\le
2R|D_{\widehat\mu_n}-D_\mu|
+
2R|D_{\widehat\nu_n}-D_\nu|.
\]
Taking expectations and combining these estimates yields $\E|E_{\widehat\mu_n,\widehat\nu_n}-E_{\mu,\nu}|
\lesssim_R n^{-1/2}.$

\subsection{Proof of Lemma \ref{lem:cost-stability}}
\label{pf:cost-stability}
Fix $\alpha,\alpha'\in\mathcal P(\mathcal X)$ and
$\beta,\beta'\in\mathcal P(\mathcal Y)$. Recall the original
form~\eqref{eq:cost-original} of the cost $c_{a,\mu,\nu}$. Since
$|(u-p)^2-(u-p')^2|\le 2|p-p'|$ for $u\in\{0,1\}$ and $p,p'\in[0,1]$,
recalling the definitions of $U_x^\alpha$ and $V_y^\beta$, and writing
$L^2_\mathcal X:=L^2(\mathcal W_\mathcal X,\lambda_\mathcal X)$ and
$L^2_\mathcal Y:=L^2(\mathcal W_\mathcal Y,\lambda_\mathcal Y)$, for any
$x\in \mathcal{X}$ and $y \in \mathcal{Y}$ we have
\[
  \left|\|U_x^\alpha\|^2_{L^2_\mathcal X}
  - \|U_x^{\alpha'}\|^2_{L^2_\mathcal X}
  \right| \le 2\kappa_\mathcal{X}\Delta_\mathcal{X}(\alpha,\alpha'),
  \qquad
  \left|\|V_y^\beta\|^2_{L^2_\mathcal Y}
  - \|V_y^{\beta'}\|^2_{L^2_\mathcal Y}
  \right| \le 2\kappa_\mathcal{Y}\Delta_\mathcal{Y}(\beta,\beta').
\]
Since
$0\le \|U_x^\alpha\|_{L^2_\mathcal X}^2,\|U_x^{\alpha'}\|_{L^2_\mathcal X}^2\le\kappa_\mathcal{X}$
and
$0\le \|V_y^\beta\|_{L^2_\mathcal Y}^2,\|V_y^{\beta'}\|_{L^2_\mathcal Y}^2\le\kappa_\mathcal{Y}$
by Corollary~\ref{cor:crofton-centered},

\[
\begin{aligned}
4\left|\|U_x^\alpha\|^2_{L^2_\mathcal X}\|V_y^\beta\|^2_{L^2_\mathcal Y}
-\|U_x^{\alpha'}\|^2_{L^2_\mathcal X}\|V_y^{\beta'}\|^2_{L^2_\mathcal Y}\right|
&\le
4\|V_y^\beta\|^2_{L^2_\mathcal Y}
\left|\|U_x^\alpha\|^2_{L^2_\mathcal X}-\|U_x^{\alpha'}\|^2_{L^2_\mathcal X}\right| + 
4\|U_x^{\alpha'}\|^2_{L^2_\mathcal X}
\left|\|V_y^\beta\|^2_{L^2_\mathcal Y}-\|V_y^{\beta'}\|^2_{L^2_\mathcal Y}\right|\\
&\le
8\kappa_{\mathcal X}\kappa_{\mathcal Y}
\bigl(\Delta_{\mathcal X}(\alpha,\alpha')
+\Delta_{\mathcal Y}(\beta,\beta')\bigr).
\end{aligned}
\]

Applying the bound \eqref{eq:tensor-bound} together with
Corollary~\ref{cor:crofton-centered}, we obtain
\begin{align*}
\|U_x^\alpha\otimes V_y^\beta-U_x^{\alpha'}\otimes V_y^{\beta'}\|_{L^2(\mathcal W,\lambda)}
&\le
\|U_x^\alpha-U_x^{\alpha'}\|_{L^2_\mathcal X}
\|V_y^\beta\|_{L^2_\mathcal Y}
+
\|U_x^{\alpha'}\|_{L^2_\mathcal X}
\|V_y^\beta-V_y^{\beta'}\|_{L^2_\mathcal Y}\\
&\le
\sqrt{\kappa_{\mathcal X}\kappa_{\mathcal Y}}
\bigl(\Delta_{\mathcal X}(\alpha,\alpha')
+\Delta_{\mathcal Y}(\beta,\beta')\bigr).
\end{align*}
Combining this with
$\|a\|_{L^2(\mathcal W,\lambda)}\le 16\sqrt{\kappa_{\mathcal X}\kappa_{\mathcal Y}}$
and the Cauchy--Schwarz inequality gives
\[
2\left|\int_{\mathcal W}a(\xi,\zeta)
\bigl(U_x^\alpha(\xi)V_y^\beta(\zeta)-U_x^{\alpha'}(\xi)V_y^{\beta'}(\zeta)\bigr)
\,d\lambda(\xi,\zeta)\right|
\le
32\kappa_{\mathcal X}\kappa_{\mathcal Y}
\bigl(\Delta_{\mathcal X}(\alpha,\alpha')+\Delta_{\mathcal Y}(\beta,\beta')\bigr).
\]
Combining these two bounds gives~\eqref{eq:cost-stability}.

\subsection{Proof of Lemma \ref{prop:ambient-regularity}}
\label{pf:ambient-regularity}

\subsubsection*{Proof for $\mathcal C_{\mu,\nu}$}
We first prove that $\mathcal C_{\mu,\nu}$ is uniformly bounded and
uniformly Lipschitz. Recall the alternative form~\eqref{eq:cost-alternate}
for the cost
\[
  c_{a_\eta, \mu, \nu}(x,y)= -4\|U_x^\mu\|^2_{L^2(\mathcal{W}_\mathcal{X}, \lambda_\mathcal{X})}\|V_y^\nu\|^2_{L^2(\mathcal{W}_\mathcal{Y}, \lambda_\mathcal{Y})}
  - 16\kappa_\mathcal{X}\kappa_\mathcal{Y} \int \alpha_u^\mu(x)\beta_v^\nu(y) \, d\eta(u,v).
\]
By Corollary~\ref{cor:crofton-centered} and
Lemma~\ref{lem:linear-distance},
$\|U_x^\mu\|^2_{L^2(\mathcal{W}_\mathcal{X}, \lambda_\mathcal{X})}$ and
$\|V_y^\nu\|^2_{L^2(\mathcal{W}_\mathcal{Y}, \lambda_\mathcal{Y})}$ are
uniformly bounded with bounds $\kappa_{\mathcal X}$ and
$\kappa_{\mathcal Y}$, and $1$-Lipschitz in $x$ and $y$, respectively. It
immediately follows that the first term in \eqref{eq:cost-alternate} is
uniformly bounded with bound $4\kappa_\mathcal{X}\kappa_\mathcal{Y}$,
$4\kappa_\mathcal{Y}$-Lipschitz in $x$, and
$4\kappa_\mathcal{X}$-Lipschitz in $y$.

Now since $|\alpha_u^\mu(x)|, |\beta_v^\nu(y)| \leq \frac{1}{2}$ by
Lemma~\ref{lem:linear-distance-y} and $\|\eta\|_{\TV}\leq 2$, the
second term is uniformly bounded with bound
$8\kappa_\mathcal{X}\kappa_\mathcal{Y}$. Moreover, for fixed $y$, the
$\frac{1}{2\kappa_\mathcal{X}}$-Lipschitzness of $\alpha_u^\mu$ gives
\[
\left|\int \alpha_u^\mu(x)\beta_v^\nu(y)\,d\eta(u,v)
-
\int \alpha_u^\mu(x')\beta_v^\nu(y)\,d\eta(u,v)\right|
\le
\frac{\|x-x'\|}{2\kappa_\mathcal{X}}\cdot \frac12 \|\eta\|_{\TV}
\le
\frac{\|x-x'\|}{2\kappa_\mathcal{X}}.
\]
Hence the second term is $8\kappa_\mathcal{Y}$-Lipschitz in $x$.
Analogously, it is $8\kappa_\mathcal{X}$-Lipschitz in $y$. Combining both
terms, we obtain
\[
\|c_{a_\eta,\mu,\nu}\|_\infty\le 12\kappa_\mathcal{X}\kappa_\mathcal{Y},\qquad
\|c_{a_\eta,\mu,\nu}\|_{\Lip,x}\le 12\kappa_\mathcal{Y},\qquad
\|c_{a_\eta,\mu,\nu}\|_{\Lip,y}\le 12\kappa_\mathcal{X}.
\]

\subsubsection*{Proof for $\overline\Phi_{\mu}$}
By
Proposition~\ref{lem:signed-mixture},
$\Lambda_{\mu,\eta}(\mathcal X)\le 12\kappa_{\mathcal Y}$. Note that 
$\overline{\phi}_{\mu,\eta}(w)=
-\int \|z-w\|\,d\Lambda_{\mu,\eta}(z)$, $w\in Q_\mathcal{X}$. For each fixed $z\in \mathcal X$, the map
$w\mapsto -\|z-w\|$ is concave and $1$-Lipschitz on $Q_{\mathcal X}$. Since
$\Lambda_{\mu,\eta}$ is a positive measure, $\overline{\phi}_{\mu,\eta}$ is
concave on $Q_{\mathcal X}$. For $w,w'\in Q_{\mathcal X}$,
\[
|\overline{\phi}_{\mu,\eta}(w)-\overline{\phi}_{\mu,\eta}(w')|
\le
\int \bigl|\|z-w\|-\|z-w'\|\bigr|\,d\Lambda_{\mu,\eta}(z)
\le
12\kappa_{\mathcal Y}\|w-w'\|.
\]
Also, for $z\in \mathcal X$ and $w\in Q_{\mathcal X}$, $\|z-w\|\le 2R$.
Therefore
\[
|\overline{\phi}_{\mu,\eta}(w)|
\le
\int \|z-w\|\,d\Lambda_{\mu,\eta}(z)
\le
2R\,\Lambda_{\mu,\eta}(\mathcal X)
\le
24\kappa_{\mathcal Y}R.
\]
Thus,
\[
\|\overline{\phi}_{\mu,\eta}\|_{\infty,Q_{\mathcal X}}
\le 24\kappa_{\mathcal Y}R,
\qquad
\|\overline{\phi}_{\mu,\eta}\|_{\Lip(Q_{\mathcal X})}
\le 12\kappa_{\mathcal Y}.
\]

\subsubsection*{Proof for $\mathcal C^\circ_{\mu,\nu}$}
By definition, both uniform boundedness and uniform Lipschitzness of
$\mathcal C^\circ_{\mu,\nu}$ immediately follow from the two preceding proofs, and the bounds are given by
\[
\|c^\circ_{a_\eta,\mu,\nu}\|_{\infty}
\le
12\kappa_{\mathcal X}\kappa_{\mathcal Y}
+
24\kappa_{\mathcal Y}R,
\qquad
\|c^\circ_{a_\eta,\mu,\nu}\|_{\Lip,x}
\le
24\kappa_{\mathcal Y},
\qquad
\|c^\circ_{a_\eta,\mu,\nu}\|_{\Lip,y}
\le
12\kappa_{\mathcal X}.
\]

\subsubsection*{Proof for $\overline{\mathcal C}_{\mu, \nu}$}
Fix $c^\circ_{a_\eta,\mu,\nu}\in\mathcal C^\circ_{\mu,\nu}$ and
$y\in \mathcal Y$. Note that 
\[
\overline{c}_{a_\eta,\mu,\nu}(w,y)
=
M_\eta(y)
+
\int q_{\eta,y}(z)(-\|z-w\|)\,d\Lambda_{\mu,\eta}(z),
\qquad w\in Q_{\mathcal X}.
\]
Concavity was shown in Step 3.1 of the proof of Theorem~\ref{thm:main-informal}. For
Lipschitzness, if $w,w'\in Q_{\mathcal X}$, then
\[
\left|\overline{c}_{a_\eta,\mu,\nu}(w,y)
-
\overline{c}_{a_\eta,\mu,\nu}(w',y) \right|
\le
\int q_{\eta,y}(z)\|w-w'\|\,d\Lambda_{\mu,\eta}(z)
\le
24\kappa_{\mathcal Y} \|w-w'\|,
\]
where we used $0\le q_{\eta,y}\le 2$ and
$\Lambda_{\mu,\eta}(\mathcal X)\le 12\kappa_{\mathcal Y}$.

For uniform boundedness, fix $x_0\in \mathcal X$. By the uniform
boundedness of $c^\circ_{a_\eta,\mu,\nu}$ from the last part,
for every $w\in Q_{\mathcal X}$,
\[
|\overline{c}_{a_\eta,\mu,\nu}(w,y)|
\le
|\overline{c}_{a_\eta,\mu,\nu}(x_0,y)|
+
24\kappa_{\mathcal Y}\|w-x_0\|
\le
12\kappa_\mathcal{X}\kappa_\mathcal{Y}
+
24\kappa_\mathcal{Y}R
+
48\kappa_{\mathcal Y}R,
\]
where we have used
$\overline{c}_{a_\eta,\mu,\nu}(x_0,y)
=c^\circ_{a_\eta,\mu,\nu}(x_0,y)$ and $\|w-x_0\|\le 2R$. Thus,
\[
\|\overline{c}_{a_\eta,\mu,\nu}(\cdot, y)\|_{\infty,Q_{\mathcal X}}
\le
12\kappa_\mathcal{X}\kappa_\mathcal{Y}
+
72\kappa_\mathcal{Y}R,
\qquad
\|\overline{c}_{a_\eta,\mu,\nu}(\cdot, y)\|_{\Lip(Q_{\mathcal X})}
\le
24\kappa_{\mathcal Y}.
\]

\subsection{Proof of Lemma~\ref{lem:reg of F}}
\label{pf:dual-potential-regularity}

Fix $f\in\mathcal F^\circ_{\mu,\nu}$, and choose
$c^\circ\in\mathcal C^\circ_{\mu,\nu}$ such that
$f\in\mathcal F_{c^\circ}$ and $\sup_{\mathcal X}f=0$. Set
$B_{R,d_x,d_y}:=12\kappa_{\mathcal X}\kappa_{\mathcal Y}
+72\kappa_{\mathcal Y}R$ and
$L_{R,d_x,d_y}:=24\kappa_{\mathcal Y}$. By Lemma~\ref{prop:ambient-regularity},
$c^\circ$ and $\overline c$ are uniformly bounded by
$B_{R,d_x,d_y}$, while each map
$w\mapsto\overline c(w,y)$ is concave and
$L_{R,d_x,d_y}$-Lipschitz on $Q_{\mathcal X}$. Since $f\le0$, we have
$f^{c^\circ}(y)=\inf_{x\in\mathcal X}
\{c^\circ(x,y)-f(x)\}\ge-B_{R,d_x,d_y}$. Choosing
$x_0\in\mathcal X$ such that $f(x_0)=0$ also gives
$f^{c^\circ}(y)\le c^\circ(x_0,y)\le B_{R,d_x,d_y}$.
Hence $\|f^{c^\circ}\|_{\infty,\mathcal Y}\le B_{R,d_x,d_y}$.
Since $f=(f^{c^\circ})^{c^\circ}$, it follows that
$-2B_{R,d_x,d_y}\le f\le0$.

For each $y\in\mathcal Y$, the map
$w\mapsto\overline c(w,y)-f^{c^\circ}(y)$ is concave and
$L_{R,d_x,d_y}$-Lipschitz. Taking the infimum over $y\in\mathcal Y$
preserves concavity and the common Lipschitz constant. Combined with the
bounds on $\overline c$ and $f^{c^\circ}$, we thus have
\[
\|\overline f\|_{\infty,Q_{\mathcal X}}
\le 2B_{R,d_x,d_y}
=24\kappa_{\mathcal X}\kappa_{\mathcal Y}
+144\kappa_{\mathcal Y}R,
\qquad
\|\overline f\|_{\Lip(Q_{\mathcal X})}
\le L_{R,d_x,d_y}
=24\kappa_{\mathcal Y}.
\]

\subsection{Proof of Lemma~\ref{lem:convex-hull}}
\label{pf:convex-hull}
Throughout, we write $H:=\{h_{u,v}\}_{(u,v)\in\mathcal X\times\mathcal Y}$.

\emph{Step 1: $\mathcal A\subseteq
16\sqrt{\kappa_\mathcal{X}\kappa_\mathcal{Y}}
\overline{\operatorname{aco}}(H)$.}
Fix $\eta$ with $\|\eta\|_{\TV}\le2$ and $\varepsilon>0$. By the
uniform continuity of $(u,v)\mapsto h_{u,v}$ (Lemma~\ref{lem:holder-smooth}), there exists
$\delta>0$ such that
$\|h_{u,v}-h_{u',v'}\|_{L^2(\mathcal W,\lambda)}<\varepsilon$ whenever
$\|(u,v)-(u',v')\|<\delta$. Let $\{z_i\}_{i=1}^K$ be a finite
$\delta$-cover of $\mathcal X\times\mathcal Y$, and define the partition
$\{A_i\}_{i=1}^K$ of $\mathcal X\times\mathcal Y$ by $A_1=(\mathcal X\times \mathcal Y) \cap B^{\circ}(z_1,\delta)$, where $B^{\circ}(z_1,\delta)$ is open, and 
\[A_i=((\mathcal X\times \mathcal Y) \cap B^{\circ}(z_i,\delta))\setminus\bigcup_{j=1}^{i-1}B^{\circ}(z_j,\delta),
\quad i=2,\ldots,K,\]
where we discard any empty ones with no loss of generality. The sets $A_i$
are Borel and disjoint. Consider the simple function
$F_\varepsilon:=\sum_{i=1}^K\mathbf 1_{A_i}h_{z_i}$, whose Bochner
integral is
$\int F_\varepsilon\,d\eta=\sum_{i=1}^K\eta(A_i)h_{z_i}$.
Since the sets $A_i$ are disjoint,
$\sum_{i=1}^K|\eta(A_i)|\le\|\eta\|_{\TV}\le2$, so
$\int F_\varepsilon\,d\eta\in2\operatorname{aco}(H)$. Moreover,
\begin{align*}
\Bigl\|\int h_{u,v}\,d\eta(u,v)-\int F_\varepsilon\,d\eta\Bigr\|
_{L^2(\mathcal W,\lambda)}
&\le\sum_{i=1}^K\int_{A_i}\|h_{z}-h_{z_i}\|_{L^2(\mathcal W,\lambda)}
\,d|\eta|(z)\\
&\le\varepsilon\sum_{i=1}^K|\eta|(A_i)\le2\varepsilon.
\end{align*}
Letting $\varepsilon\to0$ gives $\int h_{u,v}\,d\eta\in
2\overline{\operatorname{aco}}(H)$, and multiplying by
$8\sqrt{\kappa_\mathcal{X}\kappa_\mathcal{Y}}$ yields the inclusion.

\emph{Step 2: Entropy of $\mathcal{A}$.}
By Lemma~\ref{lem:holder-smooth}, there is $L>0$ (depending on $R$, $d_x$
and $d_y$) such that
$\|h_{u,v}-h_{u',v'}\|_{L^2(\mathcal W,\lambda)}
\le L\bigl(\|u-u'\|+\|v-v'\|\bigr)^{1/2}$ for all
$(u,v),(u',v')\in\mathcal X\times\mathcal Y$. Hence any
$(\varepsilon/L)^2$-net of $\mathcal X\times\mathcal Y$ (with respect to
the metric $\|u-u'\|+\|v-v'\|$) induces an $\varepsilon$-net of $H$, and
since $\mathcal X\times\mathcal Y \subset B^{d_x}_R(0) \times B^{d_y}_R(0)$, we have
\[
\mathcal N\bigl(\varepsilon,H,\|\cdot\|_{L^2(\mathcal W,\lambda)}\bigr)
\lesssim_{R,d_x,d_y} \varepsilon^{-2(d_x+d_y)},
\qquad 0<\varepsilon\le1.
\]
Next, $\overline{\operatorname{aco}}(H)$ is a subset of the
closed convex hull
$\overline{\operatorname{conv}}\bigl(H\cup(-H)\cup\{0\}\bigr)$. Since
$\mathcal N\bigl(\varepsilon,H\cup(-H)\cup\{0\},\|\cdot\|_{L^2(\mathcal W,\lambda)}\bigr)\le2\mathcal N\bigl(\varepsilon,H,\|\cdot\|_{L^2(\mathcal W,\lambda)}\bigr)+1$, the entropy bound for convex hulls
\cite[Theorem~2.6.9]{VanDerVaartWellner1996}, applied with
$V=2(d_x+d_y)$, gives
\[
\log \mathcal N\bigl(\varepsilon,\overline{\operatorname{conv}}
\bigl(H\cup(-H)\cup\{0\}\bigr),\|\cdot\|_{L^2(\mathcal W,\lambda)}\bigr)
\lesssim_{R,d_x,d_y} \varepsilon^{-\frac{2V}{V+2}}
=\varepsilon^{-q_{d_x,d_y}},
\qquad 0<\varepsilon\le1.
\]
As $\overline{\operatorname{aco}}(H)\subseteq
\overline{\operatorname{conv}}(H\cup(-H)\cup\{0\})$, the same bound holds
for $\overline{\operatorname{aco}}(H)$, and hence for $\mathcal{A}$ by
Step 1, up to rescaling of the leading constant.

%% file: appendix/semiconcave-failure.tex
\section{Failure of semiconcavity}
\label{app:semiconcavity}
\label{ex:failure-semiconcavity}
Let $\mathcal X=\mathcal Y=[-1/2,1/2]$ and let $\mu=\nu=\operatorname{Unif}([-1/2,1/2])$. In dimension $d=1$, we have $\Sd^0=\{-1,1\}$, $a_1=1$, and $\kappa_{\mathcal X}=\kappa_{\mathcal Y}=1$. Hence $\mathcal W_{\mathcal X}=\mathcal W_{\mathcal Y}=\Sd^0\times[-1/2,1/2]$, with $d\lambda_{\mathcal X}(\theta,t)=d\lambda_{\mathcal Y}(\theta,t)=d\sigma_0(\theta)\,dt$, where $\sigma_0(\{1\})=\sigma_0(\{-1\})=1/2$. Note that $H_{\theta,t}=\{z\in[-1/2,1/2]:\theta z>t\}$. Since $\mu(H_{\theta,t})=1/2-t$ for both $\theta=\pm1$, the centered embedding is
\[
U_x^\mu(\theta,t)=\mathbf 1_{\{\theta x>t\}}-\left(\frac12-t\right)
\]
and similarly for $V_y^\nu$. Take the Dirac measure $\eta=\delta_{(0,0)}$ at $(0,0)\in\mathcal X\times\mathcal Y$. By the defining property of the Bochner integral with respect to a Dirac measure, $a_\eta=8h_{0,0}$, i.e., $a_\eta((\theta,t),(\phi,u))=8\mathbf 1_{\{t<0\}}\mathbf 1_{\{u<0\}}$. We will show that the function $f(x):=c_{a_\eta,\mu,\nu}(x,0)$ with $y=0$ fixed is not semiconcave.\footnote{By symmetry, it would follow that $y\mapsto c_{a_\eta,\mu,\nu}(0,y)$ is not semiconcave either.} Since the two directions $\theta=\pm1$ contribute equally by symmetry, a direct computation gives
\[
\|U_x^\mu\|_{L^2(\mathcal W_{\mathcal X},\lambda_{\mathcal X})}^2
=
\int_{-1/2}^{x}
\left(t+\frac12\right)^2\,dt
+
\int_x^{1/2}
\left(t-\frac12\right)^2\,dt
=
x^2+\frac1{12},
\]
and therefore $\|V_0^\nu\|_{L^2(\mathcal W_{\mathcal Y},\lambda_{\mathcal Y})}^2=1/12$. Now let $L_x:=\int_{\mathcal W_{\mathcal X}}\mathbf 1_{\{t<0\}}U_x^\mu(\theta,t)\,d\lambda_{\mathcal X}(\theta,t)$, so that the integral in the second term in \eqref{eq:cost-original} factorizes as
\[
\int_{\mathcal W}
a_\eta(\xi,\zeta)U_x^\mu(\xi)V_0^\nu(\zeta)\,d\lambda(\xi,\zeta)
=
8L_xL_0.
\]
Writing $I(z):=\int_{-1/2}^{0}\left(\mathbf 1_{\{z>t\}}-\left(\frac12-t\right)\right)\,dt$, we have $I(z)=1/8+z$ for $z\le0$ and $I(z)=1/8$ for $z\ge0$. Since the two directions $\theta=\pm1$ give $I(x)$ and $I(-x)$,
\[
L_x
=
\frac12(I(x)+I(-x))
=
\frac18-\frac{|x|}{2}.
\]
Thus $L_0=1/8$, and consequently $8L_xL_0=L_x=1/8-|x|/2$. Therefore
\[
f(x)
=
-\frac13\left(x^2+\frac1{12}\right)-\frac14+|x|.
\]
For $0<\varepsilon<1/2$,
\[
f(\varepsilon)+f(-\varepsilon)-2f(0)
=
2\varepsilon-\frac23\varepsilon^2.
\]
If $f$ were semiconcave near $0$, then by definition~\cite[Definition~1.1.1]{cannarsa2004semiconcave}, the left-hand side would be bounded above by $C\varepsilon^2$ for all sufficiently small $\varepsilon>0$, which is impossible as $\varepsilon\downarrow0$. Therefore $x\mapsto c_{a_\eta,\mu,\nu}(x,0)$ is not semiconcave.